\DeclareSymbolFont{AMSb}{U}{msb}{m}{n}
\newtheorem{thm}{Theorem}[section]
\newtheorem{lem}[thm]{Lemma}
\newtheorem{cor}[thm]{Corollary}
\newtheorem{prop}[thm]{Proposition}
\newtheorem{question}{Question}
\theoremstyle{definition}
\newtheorem{rem}[thm]{Remark}
\newtheorem{defn}[thm]{Definition}
\def\Adm{\mathcal{A}dm}
\def\bA{\mathbb{A}}
\def\bC{\mathbb{C}}
\def\cM{\mathcal{M}}
\def\cO{\mathcal{O}}
\def\bP{\mathbb{P}}
\def\bQ{\mathbb{Q}}
\def\bZ{\mathbb{Z}}
\DeclareMathOperator{\ch}{ch}
\DeclareMathOperator{\Id}{Id}
\DeclareMathOperator{\Proj}{Proj}
\DeclareMathOperator{\Qmod}{Qmod}
\DeclareMathOperator{\Spec}{Spec}
\DeclareMathOperator{\td}{td}
\begin{document}

\title{The Class of the $d$-elliptic Locus in Genus 2}

\author[C. Lian]{Carl Lian}
\address{Carl Lian: Department of Mathematics, Columbia University in the City of New York \hfill \newline\texttt{}
\indent 2990 Broadway,
New York, NY, 10027}
\email{{\tt clian@math.columbia.edu}}

\begin{abstract}
We compute the rational Chow class of the locus of genus 2 curves admitting a $d$-to-1 map to a genus 1 curve, recovering a result of Faber-Pagani when $d=2$. The answer exhibits quasi-modularity properties similar to those in the Gromov-Witten theory of a fixed genus 1 curve. Along the way, we give a classification of Harris-Mumford admissible covers of a genus 1 curve by a genus 2 curve, which may be of independent interest. As an application of the main calculation, we compute the number of $d$-elliptic curves in a very general family of genus 2 curves obtained by fixing five branch points of the hyperelliptic map and varying the sixth.
\end{abstract}
 
\maketitle

\section{Introduction}

The main question we consider is:

\begin{question}\label{main_question}
Let $F:X\to B$ be a family of genus 2 curves, where $B$ is a proper curve. How many fibers of $F$ are \textbf{$d$-elliptic}, i.e., admit a finite cover of an elliptic curve?
\end{question}

Question \ref{main_question} amounts to understanding the closure of the locus of $d$-elliptic curves in $\overline{\cM_2}$. In order to compute the stable limits of $d$-elliptic curves, we pass to the stack $\Adm_{2/1,d}$ of admissible covers, which compactifies the Hurwitz space of degree $d$ covers of a genus 1 curve by a genus 2 curve. We have a generically finite map $\pi_{2/1,d}:\Adm_{2/1,d}\to\overline{\cM_2}$ remembering the (stabilized) source of an admissible cover, and the image of $\pi_{2/1,d}$ compactifies the locus of $d$-elliptic curves.

Our main result is:

\begin{thm}\label{main_thm}
The class of the morphism $\pi_{2/1,d}$ in $A^{*}(\overline{\cM_2})$ is
\begin{equation*}         
\left(2\sigma_{3}(d)-2d\sigma_{1}(d)\right)\delta_0+\left(4\sigma_{3}(d)-4\sigma_{1}(d)\right)\delta_1,
\end{equation*}
where
\begin{equation*}
\sigma_k(d)=\sum_{d'|d}(d')^{k}.
\end{equation*}                      
\end{thm}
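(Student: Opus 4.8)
The plan is to determine the two unknown coefficients by intersecting against test curves, exploiting the fact that $A^{1}(\overline{\cM_2})_{\bQ}$ has rank $2$. By Mumford's relation $10\lambda=\delta_0+2\delta_1$ on $\overline{\cM_2}$, the classes $\delta_0$ and $\delta_1$ already span $A^{1}(\overline{\cM_2})_{\bQ}$, so I may write $\pi_{2/1,d*}[\Adm_{2/1,d}]=a\,\delta_0+b\,\delta_1$ and it suffices to pin down the two rational numbers $a,b$. Since $\pi_{2/1,d}$ is generically finite onto the closure $\bar D$ of the $d$-elliptic locus and $\dim\Adm_{2/1,d}=2=\dim\bar D$, this pushforward is an honest effective divisor class (the one-dimensional boundary strata of $\Adm_{2/1,d}$ map into the two-dimensional boundary divisors of $\overline{\cM_2}$ and drop dimension, so they do not contribute), and the whole problem is the identification of a single divisor class in a two-dimensional vector space.

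To extract $a$ and $b$ I would choose two one-parameter families $B,B'$ of stable genus $2$ curves whose intersection matrix $\bigl(\delta_0\cdot B,\ \delta_1\cdot B;\ \delta_0\cdot B',\ \delta_1\cdot B'\bigr)$ is nondegenerate and classically known. The natural first choice is the hyperelliptic pencil $T$ from the application: fix five of the six branch points of the double cover $C\to\bP^1$ and let the sixth vary over $\bP^1$. This family meets $\delta_0$ each time the moving branch point collides with a fixed one, producing a nonseparating node, and is disjoint from $\delta_1$ (one never reaches the $3+3$ splitting of branch points needed for a separating node), so $T\cdot\delta_1=0$ and $T\cdot\delta_0$ has a standard value. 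For the second family I would take a curve inside $\delta_1$, gluing a fixed pointed elliptic curve $(E_0,p_0)$ to the varying member of $\overline{\cM}_{1,1}$ along its marked point; its intersection numbers with $\delta_0,\delta_1$ and $\lambda$ are classically computable from the universal elliptic fibration, and because $\delta_1\cdot B'\neq 0$ the vectors $(T\cdot\delta_0,0)$ and $(B'\cdot\delta_0,B'\cdot\delta_1)$ are independent.

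The crux is then to compute the two numbers $\pi_{2/1,d*}[\Adm_{2/1,d}]\cdot B$, which by the projection formula equal the weighted count of admissible covers whose stabilized source lies on $B$. Over a general member of $T$ this is an enumeration of degree-$d$ maps $C\to E'$ from a fixed smooth genus $2$ curve to varying elliptic curves, a Hurwitz-type count with total ramification $2$ (either two simple branch points or one point of ramification index $3$), governed by monodromy representations of the fundamental group of a once- or twice-punctured genus $1$ curve. Over the special members I would invoke the classification of admissible covers of a genus $1$ curve by a genus $2$ curve established earlier in the paper, enumerating the degenerate covers over each boundary point weighted by $1/|\mathrm{Aut}|$ and by the local intersection multiplicity of $B$ with $\bar D$. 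I expect the bookkeeping to reorganize as sums over divisors $d'\mid d$: covers factoring through an isogeny of degree $d'$ should produce the $\sigma_1$-type contributions, while the primitive connected covers should produce the $\sigma_3$-type contributions, reproducing the quasimodular shape of the answer.

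The main obstacle is exactly this enumeration with correct multiplicities. Three sources of delicacy must be controlled at once: the automorphisms of the covers, notably the hyperelliptic involution of $C$ and the translations and involutions of the target $E'$; the ramification of $\pi_{2/1,d}$ along the boundary; and the transversality, or failure thereof, of the test families with $\bar D$ at $\delta_0$ and $\delta_1$. Getting these weights right is what converts the raw subgroup/lattice counts into the precise coefficients $2\sigma_{3}(d)-2d\sigma_{1}(d)$ and $4\sigma_{3}(d)-4\sigma_{1}(d)$; the quasimodularity of the generating series is a valuable consistency check but not a substitute for the honest boundary analysis. Once both intersection numbers are in hand, solving the resulting $2\times 2$ linear system yields $a$ and $b$ and completes the proof.
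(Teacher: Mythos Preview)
Your overall plan---write $[\pi_{2/1,d}]=a\delta_0+b\delta_1$ and solve a $2\times2$ linear system by intersecting with two test curves---is exactly the paper's strategy, and your second family $B'$ (a fixed elliptic curve glued to the moving one) is precisely the paper's $t_E$. The problem is your first test family.

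Taking $T$ to be the hyperelliptic pencil $\mu_P$ makes the required input as hard as the theorem itself. A general member of $T$ is a smooth genus~$2$ curve, so computing $[\pi_{2/1,d}]\cdot T$ directly means counting, with multiplicity, the $d$-elliptic curves in a pencil of \emph{smooth} curves. This is not a Hurwitz problem in the usual sense: Hurwitz numbers fix the target and vary the source, whereas here both vary. Your suggestion of monodromy on a punctured torus would count covers of a fixed elliptic curve, which is a different question. In fact the paper runs the logic in the opposite direction: it first proves the main theorem, then uses it together with $[\mu_P]=16\delta_{00}+96\delta_{01}$ to \emph{deduce} the count of $d$-elliptic members of $\mu_P$ (Theorem~\ref{enum_thm}). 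Your proposal would make that application circular.

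The idea you are missing is to choose \emph{both} test families entirely inside the boundary of $\overline{\cM_2}$. The paper pairs $t_E\subset\Delta_1$ with a second family $\eta_E\subset\Delta_0$, obtained by gluing the origin of a fixed general elliptic curve $E$ to a moving point of $E$. Because every fiber of $\eta_E$ is singular, every point of $[\pi_{2/1,d}]\cdot\eta_E$ corresponds to a \emph{degenerate} admissible cover, and the classification developed in \S\ref{admissible_classification} pins these down to finitely many explicit topological types (2B and 3C). Their enumeration then reduces to elementary counts of isogenies and torsion points (Lemmas~\ref{count_isogenies} and~\ref{count_covers_from_elliptic_curve}), together with Ramanujan's convolution identity for $\sum\sigma_1(i)\sigma_1(d-i)$ on the $t_E$ side. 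The local intersection multiplicities are read off from the explicit complete local rings of $\Adm_{2/1,d}$ (Proposition~\ref{adm_defos}); this is where the non-transversality you worry about is actually resolved. With both intersection numbers in hand, the linear system is immediate.
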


When $d=2$, Theorem \ref{main_thm} recovers \cite[Proposition 2]{faber}. Indeed, it follows from \S \ref{split_jac} that the morphism $\pi_{2/1,2}$ is generically 4-to-1, so the class of the substack of bielliptic curves is $\frac{1}{4}[\pi_{2/1,2}]=\frac{3}{2}\delta_0+6\delta_1$. We may also deduce:

\begin{cor}\label{modularity_statement}
We have
\begin{equation*}
\sum_{d}[\pi_{2/1,d}]q^d\in A^{1}(\overline{\cM_{2}})\otimes\Qmod,
\end{equation*}
that is, the generating functions of the coefficients of $\delta_0$ and $\delta_1$ lie the ring of quasi-modular forms.
\end{cor}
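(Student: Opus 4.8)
The plan is to use Theorem~\ref{main_thm} to reduce the claim to a purely arithmetic statement about the two $q$-series occurring as the coefficients of $\delta_0$ and $\delta_1$. Since $A^{1}(\overline{\cM_2})$ is spanned by $\delta_0$ and $\delta_1$, it suffices to show that each of these two series lies in $\Qmod$. I would recall that $\Qmod=\bC[E_2,E_4,E_6]$, where $E_2,E_4,E_6$ denote the normalized Eisenstein series, and that the generating functions of the divisor sums are
\begin{equation*}
\sum_{d\geq 1}\sigma_1(d)q^d=\frac{1-E_2}{24},\qquad\sum_{d\geq 1}\sigma_3(d)q^d=\frac{E_4-1}{240}.
\end{equation*}
Writing $D=q\frac{d}{dq}$, the only summand appearing in Theorem~\ref{main_thm} not directly covered by these is $\sum_{d\geq 1}d\sigma_1(d)q^d$, which I would rewrite as $D\bigl(\sum_{d\geq 1}\sigma_1(d)q^d\bigr)$.

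The structural input I rely on is that $\Qmod$ is closed under the derivation $D$; concretely, Ramanujan's identity $D(E_2)=\tfrac{1}{12}(E_2^2-E_4)$, together with the analogous formulas for $E_4$ and $E_6$, keeps everything inside $\bC[E_2,E_4,E_6]$. Granting this and substituting the formula of Theorem~\ref{main_thm}, one computes the coefficient of $\delta_1$ to be
\begin{equation*}
4\sum_{d\geq 1}\sigma_3(d)q^d-4\sum_{d\geq 1}\sigma_1(d)q^d=\frac{E_4-1}{60}+\frac{E_2-1}{6},
\end{equation*}
and the coefficient of $\delta_0$ to be
\begin{equation*}
2\sum_{d\geq 1}\sigma_3(d)q^d-2D\!\left(\sum_{d\geq 1}\sigma_1(d)q^d\right)=\frac{E_4-1}{120}+\frac{1}{12}D(E_2)=\frac{E_4-1}{120}+\frac{E_2^2-E_4}{144}.
\end{equation*}
Both expressions are polynomials in $E_2$ and $E_4$, with vanishing constant term matching the fact that the $q$-series begin at $d=1$, and hence lie in $\Qmod$; this would establish the corollary.

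I do not expect a genuine obstacle here: once Theorem~\ref{main_thm} is in hand, the argument is formal. The single point worth stressing is the role of the $d\sigma_1(d)$ term in the $\delta_0$-coefficient. The ring of honest modular forms $\bC[E_4,E_6]$ is not stable under $D$, and applying $D$ to $E_2$ genuinely produces the term $E_2^2$; thus the arithmetic weight $d$ attached to $\sigma_1(d)$ is exactly what forces the quasi-modular, rather than merely modular, generator $E_2$ into the answer. This is the precise mechanism behind the quasi-modularity advertised in the abstract, and it parallels the way $E_2$ arises in the stationary Gromov--Witten theory of a fixed elliptic curve.
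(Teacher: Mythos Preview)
Your proof is correct and follows essentially the same route as the paper: invoke Theorem~\ref{main_thm}, rewrite the generating functions for $\sigma_1$ and $\sigma_3$ in terms of $E_2$ and $E_4$, and handle the $d\sigma_1(d)$ term via Ramanujan's identity $D(E_2)=\tfrac{1}{12}(E_2^2-E_4)$. Your explicit expressions in fact agree with (and in a couple of places silently correct minor sign/constant typos in) the paper's displayed formulas for $T_0$ and $T_1$, and your closing remark explaining why $E_2$ is forced to appear is a nice addition not present in the original.
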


Analogous quasi-modularity statements hold in the all-genus Gromov-Witten theory of elliptic curves, see \cite[\S 5]{op}. Our results deal with the enumeration of covers of \textit{moving} elliptic curves, so it is natural to ask about generalizations of Theorem \ref{main_thm} and Corollary \ref{modularity_statement} to higher genus. See \S \ref{main_pf} for further discussion.


As an example application in the direction of Question \ref{main_question}, we prove:

\begin{thm}\label{enum_thm}
Let $x_1,\ldots,x_5\in\bP^1$ be a very general collection of points. Let $a_d$ be the number of points $x_6\in\bP^1$ such that the hyperelliptic curve branched over $x_1,\ldots,x_6$ is smooth and $d$-elliptic. Then,
\begin{equation*}
a_d=5d\left[\sum_{d'|d}\left(\frac{\sigma_{3}(d')}{d'}\cdot\mu\left(\frac{d}{d'}\right)\right)-d\right],
\end{equation*}
where $\mu(m)$ is the M\"{o}bius function.
\end{thm}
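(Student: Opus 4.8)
The plan is to realize the family of Theorem~\ref{enum_thm} as an explicit curve $B\cong\bP^1$ in $\overline{\cM_2}$ and to pair it with the class of Theorem~\ref{main_thm}. Fixing $x_1,\dots,x_5$ and letting $x_6$ vary gives the one-parameter family $y^2=(x-x_6)\prod_{i=1}^{5}(x-x_i)$ of hyperelliptic genus~$2$ curves, hence a morphism $B\to\overline{\cM_2}$; its $d$-elliptic members are exactly the points of $B$ lying in the image of $\pi_{2/1,d}$, and very-generality of the $x_i$ guarantees that $B$ is transverse to this locus, meets it only in smooth curves, and that each such curve is primitively $d'$-elliptic for a single $d'\mid d$ (so no fiber is overcounted). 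First I would compute $B\cdot\delta_0$ and $B\cdot\delta_1$. Every fiber is an irreducible double cover of $\bP^1$, so no separating node appears and $B\cdot\delta_1=0$; the only degenerations occur at the five values $x_6=x_i$, where two branch points collide and the stable limit is an irreducible one-nodal curve of geometric genus~$1$, a point of $\delta_0$. Centering at such a point with local parameter $s=x_6-x_i$, the total space is $y^2=x^2-s\,x$, and completing the square exhibits an $A_1$-singularity $UV=-s^2/4$; the smoothing parameter is $\sim s^2$, so $B$ meets $\delta_0$ with multiplicity $2$ at each of the five points and $B\cdot\delta_0=10$.

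Substituting into Theorem~\ref{main_thm} yields
\begin{equation*}
B\cdot[\pi_{2/1,d}]=10\bigl(2\sigma_3(d)-2d\sigma_1(d)\bigr)=20\,u(d),\qquad u(n):=\sigma_3(n)-n\sigma_1(n),
\end{equation*}
and the left-hand side equals the number of admissible covers whose stabilized source lies on $B$, each weighted by the local degree of $\pi_{2/1,d}$. The crucial point, which I expect to be the main obstacle, is to show that all of these covers lie over \emph{smooth} fibers, i.e.\ that none of the five nodal limits at $x_6=x_i$ is the source of a degree-$d$ admissible cover; otherwise a boundary term would have to be subtracted. I would settle this using the classification of admissible covers of a genus~$1$ curve by a genus~$2$ curve from the earlier sections: the five limits have a very general elliptic normalization, and one checks against the classification that such curves do not occur as sources in $\Adm_{2/1,d}$.

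Granting the vanishing of the boundary contribution, it remains to convert the weighted cover-count into the geometric count $a_d$. A degree-$d$ cover of a smooth fiber $C$ factors uniquely as a primitive degree-$d'$ cover $C\to E'$ (with $d'\mid d$) followed by a degree-$(d/d')$ isogeny $E'\to E$; there are $\sigma_1(d/d')$ subgroups of $E'$ of the requisite order, and by the classification a primitively $d'$-elliptic curve carries a number of primitive structures independent of $d'$, equal to $4$ by the $d=2$ case where $\pi_{2/1,2}$ is $4$-to-$1$. Writing $\widetilde{a}_{d'}$ for the number of smooth fibers of $B$ that are primitively $d'$-elliptic and $*$ for Dirichlet convolution, this bookkeeping gives
\begin{equation*}
20\,u(d)=B\cdot[\pi_{2/1,d}]=4\,(\widetilde{a}*\sigma_1)(d),\qquad a_d=\sum_{d'\mid d}\widetilde{a}_{d'}=(\mathbf{1}*\widetilde{a})(d).
\end{equation*}

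Finally I would invert these arithmetic relations. The first identity gives $\widetilde{a}=5\,(u*\sigma_1^{-1})$, and using $\mathbf{1}*\sigma_1^{-1}=\mathrm{id}^{-1}$, where $\mathrm{id}^{-1}(n)=n\mu(n)$ is the Dirichlet inverse of $n\mapsto n$, we obtain
\begin{equation*}
a_d=(\mathbf{1}*\widetilde{a})(d)=5\,(u*\mathrm{id}^{-1})(d)=5d\sum_{d'\mid d}\frac{\sigma_3(d')-d'\sigma_1(d')}{d'}\,\mu\!\left(\tfrac{d}{d'}\right).
\end{equation*}
Splitting the summand and using $\sigma_1*\mu=\mathrm{id}$, i.e.\ $\sum_{d'\mid d}\sigma_1(d')\mu(d/d')=d$, collapses this to
\begin{equation*}
a_d=5d\left[\sum_{d'\mid d}\frac{\sigma_3(d')}{d'}\mu\!\left(\tfrac{d}{d'}\right)-d\right],
\end{equation*}
which is the claimed formula. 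Apart from the two intersection numbers and the boundary vanishing, everything here is elementary manipulation of these convolutions.
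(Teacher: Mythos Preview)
Your approach is essentially the paper's: intersect the pencil of hyperelliptic curves with $[\pi_{2/1,d}]$ from Theorem~\ref{main_thm}, check that for very general $x_1,\dots,x_5$ the intersection lands entirely in the smooth locus and is transverse, then unwind the weighted cover count via Dirichlet convolution. The paper works instead with the family $\mu_P$ parametrized by $t$ with $x_6=t^2$, a $2:1$ base change of your $B$; this doubles all intersection numbers but changes nothing structurally, and your direct computation $B\cdot\delta_0=10$, $B\cdot\delta_1=0$ is in fact a little cleaner than the paper's detour through $[\mu_P]=16\delta_{00}+96\delta_{01}$.

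Two places where the paper is more careful than your sketch. First, the factor of $4$ is not deduced from the $d=2$ case but from the split-Jacobian theory of \S\ref{split_jac}: once $J(C)$ is not isogenous to a self-product (arranged by very-generality), Corollary~\ref{complement_not_isogenous} gives exactly two optimal covers of the same degree $d'$, and the extra factor of $2$ is the ordering of the two branch points in $\Adm_{2/1,d}$; your appeal to ``the classification'' does not actually supply the $d'$-independence. Second, the points of $B\times_{\overline{\cM_2}}\Adm_{2/1,d}$ carry the datum of an isomorphism $h:C_{x_6}\cong C$, and one must check (as the paper does in Lemma~\ref{control_aut_groups} and Proposition~\ref{mu_and_pi_intersection_nice}(iii)) that every automorphism of $C_{x_6}$ is compatible with the cover, so that $h$ may be ignored and the fiber-product points have no automorphisms. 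With those two points filled in, your argument is complete and matches the paper's.
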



When $d=2$, Theorem \ref{enum_thm} says that there are 15 points $x_6$ such that the hyperelliptic curve branched over $x_1,\ldots,x_6$ is bielliptic, i.e., it admits a 2-to-1 cover of an elliptic curve. Indeed, there are $\frac{1}{2}\binom{5}{2}\binom{3}{2}=15$ ways to partition $x_1,\ldots,x_6$ into two pairs $\{a_1,a_2\},\{b_1,b_2\}$, and a fifth point $c$. When $x_1,\ldots,x_5$ are general, there is a unique $x_6\in\bP^1$ such that there is an involution of $\bP^1$ swapping $a_1$ with $a_2$, $b_1$ with $b_2$, and $c$ with $x_6$. The quotient by this involution has genus 1.

The structure of the paper is as follows. We collect some preliminary results in \S \ref{prelim}. We give a complete classification of admissible covers of a genus 1 curve by a genus 2 curve in \S \ref{admissible_classification}: while not all covers that appear will arise in our main computation, the classification may be of independent interest. In \S \ref{tE_intersection} and \S \ref{eta_E_intersection}, we intersect $\pi_{2/1,d}:\Adm_{2/1,d}\to \overline{\cM_2}$ with two test curves in the boundary: $t_E$, formed by gluing a fixed general elliptic curve to a varying elliptic curve along their origins, and $\eta_E$, formed by gluing a variable point to the origin of a fixed elliptic curve. Given the classification of \S \ref{admissible_classification}, understanding the set-theoretic intersections is straightforward, but the intersections are not transverse: we obtain the intersection multiplicities by carrying out explicit computations in the deformation spaces of admissible covers. From here, we easily deduce Theorem \ref{main_thm} and Corollary \ref{modularity_statement} in \S \ref{main_pf}. Finally, we prove Theorem \ref{enum_thm} in \S \ref{application}.

\subsection{Acknowledgments}
I am grateful to my advisor Johan de Jong, who suggested the initial research directions and offered invaluable guidance throughout. I also thank Jim Bryan, Raymond Cheng, Bong Lian, Georg Oberdieck, and Nicola Pagani for helpful comments. This work was completed with the support of an NSF Graduate Research Fellowship.

\section{Preliminaries}\label{prelim}

\subsection{Conventions}\label{conventions}

We work over $\bC$. Fiber products are over $\Spec(\bC)$ unless otherwise stated. All curves, unless otherwise stated, are assumed projective and connected with only nodes as singularities. The \textit{genus} of a curve $X$ refers to its arithmetic genus and is denoted $p_a(X)$. A \textit{rational} curve is an irreducible curve of geometric genus 0, and a \textit{nodal cubic} is a rational curve with one node. All moduli spaces are understood to be moduli stacks, rather than coarse spaces. 

All Chow rings are taken with rational coefficients and are denoted $A^{*}(X)$, where $X$ is a variety or Deligne-Mumford stack over $\bC$. We will frequently refer to the Chow class of a morphism $f:Y\to X$, by which we mean the bivariant class (see \cite[Chapter 17]{fulton}). However, $f$ will always be generically finite and $Y,X$ will always be proper, so we may as well take the proper pushforward $f_{*}([Y])$. The class of $f$ in $A^{*}(X)$ is denoted $[f]$.

\subsection{The Chow Ring of $\overline{\cM_2}$}\label{chow_ring_m2}

In this section, we recall Mumford's computation of $A^{*}(\overline{\cM_2})$ in \cite{mumford}. Let us first define the classes that appear.

Let $\Delta_{0}$ be the closure of the locus in $\overline{\cM_2}$ of irreducible nodal curves, or alternatively the image of the generically 2-to-1 morphism $b_{0}:\overline{\cM_{1,2}}\to\overline{\cM_2}$ gluing the two marked points of a curve in the source. Let $\Delta_{1}$ be the closed substack of reducible curves, or alternatively the image of the 2-to-1 morphism $b_{1}:\overline{\cM_{1,1}}\times\overline{\cM_{1,1}}\to\overline{\cM_2}$ gluing the marked points. Let $\delta_i$ denote the class of $\Delta_i$ in $A^{1}(\overline{\cM_2})$, by which we always mean the Q-class.

Let $\pi:\mathcal{C}_2\to\overline{\cM_2}$ be the universal family, and define 
\begin{equation*}
\lambda_1=c_1(\Lambda^2\pi_{*}\omega_{\mathcal{C}_2/\overline{\cM_2}}).
\end{equation*}

Let $\Delta_{00}$ be the closure of the locus of rational curves with two nodes, or alternatively the image of the 8-to-1 morphism $b_{00}:\overline{M_{0,4}}\to\overline{\cM_2}$ gluing together the first two and last two marked points. Let $\Delta_{01}$ be the locus of reducible genus 2 curves having a nodal cubic as a component, or alternatively the image of the generically 2-to-1 morphism $b_{01}:\overline{\cM_{1,1}}\to\overline{\cM_2}$ that attaches a nodal cubic at the marked point. Let $\delta_{0i}$ denote the (Q-)class of $\Delta_{0i}$ in $A^{1}(\overline{\cM_2})$.

We also let $C_{000}$ denote the stable curve obtained by gluing two $\bP^1$s at three points, and let $C_{001}$ the stable curve obtained by attaching two nodal cubics at smooth points.

Let $p$ denote the bivariant class of a geometric point $\Spec(\bC)\to\overline{\cM_2}$; all such points are rationally equivalent.

\begin{thm}\cite[Theorem 10.1]{mumford}\label{M2_relations}
The classes $1,\delta_0,\delta_1,\delta_{00},\delta_{01},p$ form a $\bQ$-basis of $A^{*}(\overline{\cM_2})$. Moreover, we have the following relations\footnote{There appears to be a sign error in the second to last relation stated in \cite{mumford}; it is stated correctly in the second list of relations Mumford gives in terms of $\lambda_1$.}:
\begin{align*}
10\lambda_1&=\delta_0+2\delta_1\\
\delta_0^{2}&=\frac{5}{3}\delta_{00}-2\delta_{01}\\
\delta_0\delta_1&=\delta_{01}\\
\delta_1^{2}&=-\frac{1}{12}\delta_{01}\\
\delta_0\cdot\delta_{00}&=-\frac{1}{4}p\\
\delta_{0}\cdot\delta_{01}&=\frac{1}{4}p\\
\delta_{1}\cdot\delta_{00}&=\frac{1}{8}p\\
\delta_{1}\cdot\delta_{01}&=-\frac{1}{48}p.
\end{align*}
\end{thm}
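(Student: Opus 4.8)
The plan is to exploit the fact that every genus $2$ curve is hyperelliptic, which reduces the computation of $A^{*}(\overline{\cM_2})$ to equivariant intersection theory on a space of $6$ points on $\bP^1$. Concretely, a genus $2$ curve is a double cover of $\bP^1$ branched at $6$ points, so the branch-divisor map, made precise through the theory of admissible covers, relates $\overline{\cM_2}$ to the quotient stack $[\overline{M_{0,6}}/S_6]$. The hyperelliptic involution is a global $\bZ/2$ in every automorphism group and is invisible to rational Chow, and since we use $\bQ$-coefficients the quotient by $S_6$ passes to invariants. I would therefore aim to identify $A^{*}(\overline{\cM_2})$ with $A^{*}(\overline{M_{0,6}})^{S_6}$, transporting the entire problem to the smooth projective variety $\overline{M_{0,6}}$, whose Chow ring is presented explicitly by boundary divisors via Keel's relations.

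First I would settle the additive structure. The open locus $\cM_2$ has $A^{*}(\cM_2)_{\bQ}$ concentrated in degree $0$ (in particular $\Pic(\cM_2)$ is torsion), so iterated excision along the stratification of $\overline{\cM_2}$ by topological type reduces everything to the boundary strata. Enumerating the dual graphs of stable genus $2$ curves gives exactly the strata $\Delta_0,\Delta_1$ in codimension $1$; the rational $2$-nodal curves $\Delta_{00}$ and the elliptic-plus-nodal-cubic curves $\Delta_{01}$ in codimension $2$; and the two points $C_{000},C_{001}$ in codimension $3$. This yields the rank sequence $1,2,2,1$ and shows that $1,\delta_0,\delta_1,\delta_{00},\delta_{01},p$ span. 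Independence then follows from the relations themselves: the matrix of pairings $\delta_i\cdot\delta_{0j}$ against $p$ has nonzero determinant, so the pairing between $A^1$ and $A^2$ is perfect, as demanded by rational Poincaré duality on the smooth proper stack $\overline{\cM_2}$.

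Next I would establish the relations. The linear relation $10\lambda_1=\delta_0+2\delta_1$ is the genus $2$ instance of the Hodge class being rationally boundary-supported; in the present framework it comes from computing $\lambda_1$, via Grothendieck--Riemann--Roch applied to the double cover, as a definite $S_6$-invariant combination of boundary divisors on $\overline{M_{0,6}}$. The quadratic relations for $\delta_0^2,\delta_0\delta_1,\delta_1^2$ in terms of $\delta_{00},\delta_{01}$ combine set-theoretic incidences of the boundary (for instance $\Delta_0\cap\Delta_1=\Delta_{01}$, giving $\delta_0\delta_1=\delta_{01}$) with the self-intersection formulas for the gluing morphisms $b_0:\overline{\cM_{1,2}}\to\overline{\cM_2}$ and $b_1:\overline{\cM_{1,1}}\times\overline{\cM_{1,1}}\to\overline{\cM_2}$, where the normal bundle is expressed through the $\psi$-classes at the glued node and $\int_{\overline{\cM_{1,1}}}\psi=\tfrac{1}{24}$. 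Finally, the eight top intersection numbers against $p$ are obtained by intersecting the codimension-$2$ classes with the boundary divisors and weighting the resulting zero-dimensional strata $C_{000},C_{001}$.

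The main obstacle I anticipate is the bookkeeping of rational multiplicities. Every nonobvious coefficient---the $\tfrac{5}{3}$, $\tfrac{1}{12}$, $\tfrac{1}{48}$, and the signed pairings $\pm\tfrac14,\tfrac18$ against $p$---records the orders of automorphism groups of the relevant stable curves (the hyperelliptic involution, the extra symmetries of $C_{000}$ and $C_{001}$) together with the ramification of the double cover along the boundary. Converting a transparent intersection number computed on $\overline{M_{0,6}}$, where the six points are ordered and the variety is a fine moduli space, into the correct $\bQ$-class on the stack $\overline{\cM_2}$ forces one to divide by these automorphism orders and to track precisely how the $S_6$-action and the admissible-cover stabilization match boundary strata across the correspondence. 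Getting each such factor right, especially along the non-transverse boundary intersections, is where essentially all of the genuine work resides.
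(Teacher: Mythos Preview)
The paper does not prove this theorem at all: it is simply quoted as \cite[Theorem~10.1]{mumford} and used as input for the later computations. There is therefore no proof in the paper to compare your proposal against.

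For what it is worth, your sketch is a reasonable outline of how one would go about reproving Mumford's result, and is close in spirit to the original. Mumford does indeed use the hyperelliptic structure to pass to a configuration space of six points on $\bP^1$ (though he works with the $\mathrm{PGL}_2$-quotient of $(\bP^1)^6$ rather than with $\overline{M_{0,6}}$ and Keel's presentation, which postdates his paper), computes $\lambda_1$ via Grothendieck--Riemann--Roch, and obtains the self-intersections of boundary divisors from the normal bundles of the gluing maps using $\psi$-class integrals such as $\int_{\overline{\cM_{1,1}}}\psi=\tfrac{1}{24}$. Your diagnosis that the real content lies in tracking automorphism orders and stack multiplicities is accurate. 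But since the present paper treats the theorem as a black box, no proof is expected here; a citation suffices.
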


\subsection{Admissible covers}\label{admissible_covers_prelim}
We recall the definition of \cite{hm}:

\begin{defn}
Let $X,Y$ be curves. Let $b=(2p_a(X)-2)-d(2p_a(Y)-2)$, and let $y_1,\ldots,y_b\in Y$ be distinct marked points so that $(Y,y_1,\ldots,y_b)$ is stable in the sense of \cite{dm}. Then, an \textbf{admissible cover} consists of the data of the stable marked curve $(Y,y_1,\ldots,y_b)$ and a finite morphism $f:X\to Y$ such that:
\begin{itemize}
\item $f(x)$ is a smooth point of $Y$ if and only if $x$ is a smooth point of $X$,
\item $f$ is simply branched over the $y_i$ and \'{e}tale over the rest of the smooth locus of $Y$, and
\item at each node of $X$, the ramification indices of $f$ restricted to the two branches are equal.
\end{itemize}
\end{defn}

\begin{rem}\label{preimage_adm_smooth}
It is clear that non-separating nodes of $X$ must map to non-separating nodes of $Y$. Hence, the preimage of a smooth component of $Y$ must be a union of smooth components of $X$. 
\end{rem}

Admissible covers of degree $d$ from a genus $g$ curve to a genus $h$ curve are parametrized by a proper Deligne-Mumford stack $\Adm_{g/h,d}$, see \cite{hm,mochizuki,acv}. Let $b=2g-2-d(2h-2)$ be the number of branch points of such a cover. Then, we have a forgetful map $\psi_{g/h,d}:\Adm_{g/h,d}\to\overline{\cM_{h,b}}$ remembering the target, and another $\pi_{2/1,d}:\Adm_{g/h,d}\to\overline{\cM_g}$ remembering the source with unstable rational components contracted, as in \cite{knudsen}.

We also recall from \cite{hm} the explicit local description of $\Adm_{g/h,d}$. Let $f:X\to Y$ be a point of $\Adm_{g/h,d}$. Let $y'_1,\ldots,y'_n$ be the nodes of $Y$, and let $y_{1},\ldots,y_{b}\in Y$ be the (smooth) branch points. Let $\mathbb{C}[[t_1,\ldots,t_{3h-3+b}]]$ be the versal deformation space of $(Y,y_1,\ldots,y_b)$, so that $t_1,\ldots,t_n$ are smoothing parameters for the nodes $y'_1,\ldots,y'_n$. Let $x_{i,1},\ldots,x_{i,r_i}$ be the nodes of $X$ mapping to $y'_i$, and denote the ramification index of $f$ at $x_{i,j}$ by $a_{i,j}$. 

\begin{prop}[\cite{hm}]\label{adm_defos}
The complete local ring of $\Adm_{g/h,d}$ at $[f]$ is
\begin{equation*}
\mathbb{C}[[t_1,\ldots,t_{3h-3+b},\{t_{i,j}\}^{1\le i\le n}_{1\le j\le r_i}]]/(t_1=t_{1,1}^{a_{1,1}}=\cdots=t_{1,r_1}^{a_{1,r_1}},\ldots,t_n=t_{n,1}^{a_{n,1}}=\cdots=t_{n,r_n}^{a_{n,r_n}}).
\end{equation*} 
\end{prop}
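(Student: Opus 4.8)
The plan is to show that the deformation theory of an admissible cover $f:X\to Y$ is concentrated at the nodes of $X$, and that the smooth locus of the cover contributes nothing beyond the deformations of the marked target. First I would use that the forgetful morphism $\psi_{g/h,d}:\Adm_{g/h,d}\to\overline{\cM_{h,b}}$ is finite: for a fixed stable marked target $(Y,y_1,\ldots,y_b)$, Riemann existence together with the admissibility conditions produces only finitely many covers. Hence the complete local ring at $[f]$ is a finite $\bC[[t_1,\ldots,t_{3h-3+b}]]$-algebra, and it suffices to track how a deformation of the target lifts to a deformation of $f$.

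Next I would carry out the analysis node by node. Over the smooth locus of $Y$ the map $f$ is either \'etale or simply branched over one of the moving points $y_i$; the infinitesimal lifting property for the \'etale part, and the rigidity of the local model $z\mapsto z^{2}$ for a simple branch point (which tracks $y_i$ uniquely), show that deformations here are determined by the target alone. Thus the parameters $t_{n+1},\ldots,t_{3h-3+b}$, which do not smooth nodes, lift uniquely, and the content is concentrated at the nodes. Near a node $y_i'$ of $Y$ the local picture is $\bC[[u,v]]/(uv)$ deforming to $\bC[[u,v]]/(uv-t_i)$, while above it a node $x_{i,j}$ of ramification index $a_{i,j}$ has local ring $\bC[[s,w]]/(sw)$ with $f$ given by $u=s^{a_{i,j}}$, $v=w^{a_{i,j}}$. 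A smoothing $sw=t_{i,j}$ of $x_{i,j}$ then forces $t_i=uv=(sw)^{a_{i,j}}=t_{i,j}^{a_{i,j}}$, and imposing this over all nodes above $y_i'$ gives the chain $t_i=t_{i,1}^{a_{i,1}}=\cdots=t_{i,r_i}^{a_{i,r_i}}$.

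Finally I would assemble the global ring from the local pictures: a deformation of $f$ is the data of a deformation of $(Y,y_1,\ldots,y_b)$ together with, for each node $x_{i,j}$ of $X$, a compatible smoothing parameter $t_{i,j}$, so the complete local ring is obtained from $\bC[[t_1,\ldots,t_{3h-3+b}]]$ by adjoining the $t_{i,j}$ subject to exactly the relations $t_i=t_{i,j}^{a_{i,j}}$. The main obstacle is making the reduction to these local computations rigorous: one must verify, via a local-to-global argument for the deformations of $f$, that the relevant obstruction space vanishes and that the global deformations are obtained by freely gluing the local smoothings, with no extra constraints arising from the interaction between the nodes and the remainder of the cover. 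Once this is in place, the root relations $t_i=t_{i,j}^{a_{i,j}}$ account for the entire local structure of $\Adm_{g/h,d}$ at $[f]$.
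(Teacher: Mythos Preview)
The paper does not actually prove this proposition; it is quoted from Harris--Mumford \cite{hm} and only followed by remarks about its consequences (that $\Adm_{g/h,d}$ is Cohen--Macaulay of pure dimension $3h-3+b$, and smooth when all $a_{i,j}=1$). Your sketch is essentially the Harris--Mumford argument: reduce to the versal deformation of the marked target via the finite forgetful map, observe that over the smooth locus the cover lifts uniquely, and at each node compute the local model $u=s^{a_{i,j}}$, $v=w^{a_{i,j}}$ so that the smoothing parameters satisfy $t_i=(sw)^{a_{i,j}}=t_{i,j}^{a_{i,j}}$. You have also correctly flagged the one genuine gap, namely the local-to-global step showing that the global deformation functor is the fiber product of the local ones with no further obstructions; this is exactly what Harris and Mumford handle (and what later treatments such as \cite{mochizuki,acv} make more precise). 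So your proposal matches the cited source, and there is nothing further to compare against in the present paper.
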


Here, the variable $t_{i,j}$ is the smoothing parameter for $X$ at $x_{i,j}$. In particular, we see that $\Adm_{g/h,d}$ is Cohen-Macaulay of pure dimension $3h-3+b$. Moreover, If the $a_{i,j}$ are all equal to 1, that is, $f$ is unramified over the nodes of $Y$, then $\Adm_{g/h,d}$ is smooth at $[f]$.

\subsection{Counting branched covers of curves}

The following lemmas are standard.

\begin{lem}\label{count_isogenies}
Let $E$ be a fixed elliptic curve and $d$ a positive integer. Then, the number of isomorphism classes of isogenies $E\to F$ of degree $d$ is $\sigma_1(d)$. Likewise, the number of isomorphism classes of isogenies $F\to E$ of degree $d$ is $\sigma_1(d)$.
\end{lem}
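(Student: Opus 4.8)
The plan is to work over $\bC$, writing $E = \bC/\Lambda$ for a rank-2 lattice $\Lambda$, and to reduce both counts to the enumeration of index-$d$ sublattices of $\Lambda$.

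First I would treat isogenies out of $E$. An isogeny $f : E \to F$ of degree $d$ is determined up to isomorphism—meaning up to an isomorphism $\psi: F \xrightarrow{\sim} F'$ with $\psi\circ f = f'$—by its kernel $\Ker f \subset E$, a finite subgroup of order $d$; conversely every order-$d$ subgroup $G \subset E$ arises as the kernel of the quotient isogeny $E \to E/G$, and two isogenies with the same kernel differ by a unique such $\psi$. Thus isomorphism classes of degree-$d$ isogenies $E \to F$ biject with order-$d$ subgroups of $E$. Any such subgroup is exactly $\Lambda'/\Lambda$ for an overlattice $\Lambda \subseteq \Lambda' \subset \bC$ with $[\Lambda' : \Lambda] = d$, so the count equals the number of index-$d$ overlattices of $\Lambda$.

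Next I would reduce overlattices to sublattices and perform the combinatorial count. The map $\Lambda' \mapsto d\Lambda'$ sends index-$d$ overlattices of $\Lambda$ bijectively to index-$d$ sublattices (using $[\Lambda' : d\Lambda'] = d^2$ together with $[\Lambda':\Lambda]=d$), so it suffices to count index-$d$ sublattices of a rank-2 lattice. Fixing a basis, these are enumerated by their Hermite normal form generators $\left(\begin{smallmatrix} a & b \\ 0 & c \end{smallmatrix}\right)$ with $ac = d$, $a,c > 0$, and $0 \le b < a$; summing over divisors gives $\sum_{a \mid d} a = \sigma_1(d)$, as desired.

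Finally, for isogenies into $E$, I would either invoke the dual isogeny—dualization $f \mapsto \hat f$ gives a bijection between degree-$d$ isogenies into $E$ and degree-$d$ isogenies out of $E$, using that an elliptic curve is canonically isomorphic to its dual—or argue directly: an isogeny $f : F \to E$, after rescaling $F$ so that the induced map on universal covers is the identity, identifies $F$ with $\bC/\Lambda'$ for a sublattice $\Lambda' \subseteq \Lambda$ of index $d$, and unwinding the isomorphism relation shows isomorphism classes of such isogenies correspond precisely to index-$d$ sublattices of $\Lambda$. Either way the count is again $\sigma_1(d)$. I do not expect a substantive obstacle here: the only points requiring care are fixing the precise notion of isomorphism of isogenies, verifying that the resulting correspondence with (sub- or over-)lattices is a genuine bijection, and the standard Hermite-normal-form enumeration.
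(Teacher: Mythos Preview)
Your argument is correct and follows essentially the same route as the paper: reduce to counting order-$d$ subgroups of $E$, identify these with index-$d$ (sub)lattices of a rank-2 lattice, and enumerate via Hermite normal form to get $\sigma_1(d)$; the dual-isogeny trick handles the other direction. You are in fact slightly more careful than the paper in making explicit the bijection between overlattices and sublattices via $\Lambda'\mapsto d\Lambda'$, whereas the paper passes directly from ``subgroups of order $d$'' to ``index-$d$ sublattices of $\bZ^2$'' without comment.
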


\begin{proof}
We see that these two numbers are equal by taking duals, so it suffices to count isogenies $E\to F$ of degree $d$, i.e., quotients of $E$ by a subgroup of order $d$, which is the number of index $d$ sublattices of $\bZ^2$. A sublattice of $\bZ^2$ is determined by a $\bZ$-basis $(a,0),(b,c)$, where $a,c$ are positive and as small as possible; $b$ is uniquely determined modulo $a$. As $ac=d$, the number of such sublattices is exactly $\sigma(d)$.
\end{proof}

\begin{lem}\label{count_covers_from_elliptic_curve}
Let $(E,0)$ be an elliptic curve and $d$ a positive integer. Then, up to automorphisms of the target, there are $d^2-1$ morphisms $f:E\to\bP^1$ totally ramified at 0 and at some other point $x\in E$, one for each $x\in E[d]-\{0\}$. When $E$ is general, $f$ is simply branched over two distinct points of $\bP^1$.
\end{lem}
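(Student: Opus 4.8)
The plan is to translate the statement into one about rational functions and divisors on $E$ and then invoke Abel's theorem. Since $f$ is totally ramified at $0$, the scheme-theoretic fiber $f^{-1}(f(0))$ is the divisor $d[0]$; after post-composing with an automorphism of $\bP^1$, I may assume $f(0)=\infty$. Then $f$ is a rational function on $E$ whose only pole is a pole of order exactly $d$ at $0$, i.e. $f\in H^0(E,\cO_E(d[0]))$ with pole order exactly $d$.

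Next I would pin down the second ramification point. If $f$ is also totally ramified at some $x\neq 0$, with finite value $c:=f(x)$ (necessarily $c\neq\infty$, as the fiber over $\infty$ is already all of $d[0]$), then $f-c$ has divisor $d[x]-d[0]$. By Abel's theorem this degree-zero divisor is principal if and only if $d\cdot x=0$ in the group law of $E$, that is $x\in E[d]$; since $x\neq 0$ this gives $x\in E[d]-\{0\}$. Conversely, for each $x\in E[d]-\{0\}$ there is a rational function $g_x$, unique up to a nonzero scalar, with $\mathrm{div}(g_x)=d[x]-d[0]$, and it defines a degree-$d$ map totally ramified at $0$ (pole of order $d$) and at $x$ (zero of order $d$). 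Any $f$ totally ramified at $0$ and $x$ satisfies $f-c=\lambda g_x$ for some $\lambda\neq 0$, so $f$ and $g_x$ differ by the affine automorphism $z\mapsto\lambda z+c$ of $\bP^1$; hence all such $f$ form a single $\mathrm{PGL}_2$-orbit represented by $g_x$.

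It then remains to see that distinct $x$ give distinct orbits and to count. Since $E[d]\cong\bZ/d\bZ\times\bZ/d\bZ$, there are exactly $d^2-1$ points $x\in E[d]-\{0\}$. For injectivity I would argue as follows: if $\phi\circ g_x=g_{x'}$ with $\phi\in\mathrm{PGL}_2$, then comparing the fiber $d[0]$ over $\infty$ forces $\phi(\infty)=\infty$, so $\phi(z)=\lambda z+c$; comparing the zero divisor $d[x']$ of $g_{x'}$ with the fiber of $g_x$ over $\phi^{-1}(0)=-c/\lambda$ shows $x'=x$, unless $g_x$ is totally ramified over a third value $-c/\lambda\notin\{0,\infty\}$. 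Thus injectivity, and the clean count $d^2-1$, hold provided no $g_x$ acquires a third total ramification point.

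Finally, for the branching statement, Riemann--Hurwitz gives $\deg R=2\cdot 1-2-d(0-2)=2d$ for the ramification divisor $R$ of $g_x$; the total ramification at $0$ and at $x$ uses $2(d-1)$ of this, leaving residual ramification of degree $2$ supported away from $\{0,x\}$. A priori the residual ramification is either two simple points over distinct values, two simple points over a common value, or a single point of ramification index $3$; note the latter two configurations are precisely what would produce a third total ramification point and break injectivity. I expect the crux of the argument to be ruling these out for general $E$. I would do so by a genericity argument: each degenerate configuration is a closed condition as $E$ varies in moduli, so it suffices to exhibit a single elliptic curve, via an explicit Weierstrass model, for which the residual branching is two simple ramification points over two distinct values (which are automatically distinct from the two total branch points, since the fibers over $\infty$ and $c$ are already full). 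This genericity/explicit-example step is the only part of the proof that is not purely formal.
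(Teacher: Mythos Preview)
Your argument via Abel's theorem is essentially the paper's, rephrased: the paper says the pencil $W\subset H^0(E,\cO(d\cdot 0))$ is spanned by the unique sections vanishing to maximal order at $0$ and at $x$, and the condition $\cO(d\cdot 0)\cong\cO(d\cdot x)$ is precisely your $x\in E[d]$.

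Where you go astray is in worrying about injectivity. The lemma does not claim that distinct $x$ give distinct $\mathrm{PGL}_2$-orbits; it asserts only that for each $x\in E[d]-\{0\}$ there is a unique $f$ up to automorphism of the target, and $d^2-1$ is simply $|E[d]-\{0\}|$. Indeed for $d=2$ all three nonzero $2$-torsion points yield the \emph{same} hyperelliptic map, and the paper's later application of this lemma explicitly notes that the covers coincide in that case. Your assertion that the two degenerate residual-branching configurations ``are precisely what would produce a third total ramification point'' is also wrong once $d\ge 4$: a triple point contributes $2$ to the ramification divisor, not $d-1$, so it is not total. Finally, for the genericity of simple branching the paper's argument is cleaner than producing an explicit Weierstrass model: if the residual branching collapses (either two simple points over one value or one triple point), then $f$ is branched over only three points of $\bP^1$, so $E$ is realized as a three-point cover; such covers are rigid, hence only finitely many $E$ arise this way, and a general $E$ avoids them.
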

\begin{proof}
The linear system defining $f$ must be a 2-dimensional subspace $W$ of $V=H^0(E,\cO(d\cdot0))$. In order for $W$ to be totally ramified at $x$, we need $\cO(d\cdot0)\cong\cO(d\cdot x)$, that is, $x\in E[d]-\{0\}$. For such an $x$, there are unique (up to scaling) sections in $V$ vanishing to maximal order at 0 and $x$; thus $W$ is uniquely determined. Moreover, $f$ will be simply branched over two distinct points of $\bP^1$ unless it has two simple ramification points over the same point of $\bP^1$ or a triple ramification point; however, this will only happen for finitely many $E$, exhibited as a 3-point cover of $\bP^1$.
\end{proof}

\subsection{Genus 2 Curves with Split Jacobian}\label{split_jac}

In this section, all curves are assumed to be smooth, and $J(X)$ denotes the Jacobian of $X$. The main reference here is \cite[\S 2]{kuhn}.

\begin{defn}
Let $f:X\to E$ be a morphism of curves of degree $d$, where $C$ has genus 2 and $E$ has genus 1. We say that $f$ is \textit{optimal} if it does not factor as $X\to E'\to E$, where $E'\to E$ is an isogeny of degree greater than 1.
\end{defn}

Let $f:X\to E_1$ be an optimal cover of degree $d$. Fix a Weierstrass point $x_0\in X$, and let $\iota:X\mapsto J(X)$ be the embedding sending $x\mapsto \cO(x-x_0)$. We may regard $E_1$ as an elliptic curve with origin $f(x_0)$. We then get an induced morphism of abelian varieties $\phi_1:J(X)\to E_1$ such that $\phi_1\circ\iota=f$. We have an exact sequence
\begin{equation}\label{jacobian_ses}
\xymatrix{
0 \ar[r] & E_2 \ar[r] & J(X) \ar[r] & E_1  \ar[r] &  0
}
\end{equation}
By the optimality of $f$, $E_2$ is connected. Let $\phi_2:J(X)\to E_2$ be the dual map to the embedding $\overline{\phi_2}:E_2\mapsto J(X)$, and let $f_2=\phi_2\circ \iota:X\to E_2$.

\begin{lem}[\cite{kuhn}, \S 2]\label{kuhn}
$f_2$, as constructed above is an optimal cover of degree $d$, and $\phi=\phi_1\oplus\phi_2:J(X)\to E_1\oplus E_2$ is an isogeny of degree $d^2$.
\end{lem}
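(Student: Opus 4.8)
The plan is to run the entire argument inside the principally polarized abelian surface $J(X)$, exploiting that $\iota(X)$ is a theta divisor and that $E_2$ and its complement are complementary elliptic subvarieties. Write $\Theta$ for the principal polarization and $\lambda_{J}\colon J(X)\xrightarrow{\sim}\widehat{J(X)}$, $\lambda_{E_i}\colon E_i\xrightarrow{\sim}\widehat{E_i}$ for the canonical isomorphisms, so that the ``dual map'' of the statement unwinds to $\phi_2=\lambda_{E_2}^{-1}\circ\widehat{\overline{\phi_2}}\circ\lambda_{J}$. The two basic identifications I would set up first are $\phi_1=f_{*}$ and $\phi_2=(f_2)_{*}$: both are homomorphisms restricting to $f$, resp.\ $f_2$, along $\iota$ and sending $\iota(x_0)$ to the origin, so the universal property of the Jacobian identifies them with the norm maps of $f$ and $f_2$. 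I would also record the standard dictionary that a cover $g\colon X\to E$ of an elliptic curve is optimal precisely when $g_{*}$ has connected kernel (a component group of $\ker g_{*}$ yields a factorization of $g$ through a nontrivial isogeny, and conversely), and that in the optimal case $\deg g=\Theta\cdot(\ker g_{*})$, since the fibers of $g_{*}$ are translates of the connected kernel and $\deg g$ is the degree of $g_{*}|_{\iota(X)}$. Applying this to $f$ gives immediately $\Theta\cdot E_2=\deg f=d$, i.e.\ the exponent of $E_2$ in $J(X)$ is $d$.

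Next I would prove that $\ker\phi_2$ is connected, which by the dictionary above yields the optimality of $f_2$. Since $f$ is optimal, $E_2$ is connected and \eqref{jacobian_ses} is a genuine short exact sequence of abelian varieties; dualizing it produces the exact sequence
\begin{equation*}
0\longrightarrow\widehat{E_1}\longrightarrow\widehat{J(X)}\xrightarrow{\ \widehat{\overline{\phi_2}}\ }\widehat{E_2}\longrightarrow 0.
\end{equation*}
Hence $\ker\widehat{\overline{\phi_2}}=\widehat{E_1}$ is connected, and since $\ker\phi_2=\lambda_{J}^{-1}\!\left(\ker\widehat{\overline{\phi_2}}\right)$, the kernel $\ker\phi_2$ is a connected elliptic curve $E_1'\subset J(X)$---precisely the complement of $E_2$. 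This settles the optimality of $f_2$.

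For the degree statements I would use the standard fact that, for the inclusion of an elliptic curve into $J(X)$, the norm map composed with the inclusion is multiplication by the exponent; applied to $\overline{\phi_2}$ this gives $\phi_2\circ\overline{\phi_2}=[\,\Theta\cdot E_2\,]=[d]$. Therefore
\begin{equation*}
\ker\phi=\ker\phi_1\cap\ker\phi_2=E_2\cap\ker\phi_2=\ker(\phi_2\circ\overline{\phi_2})=E_2[d],
\end{equation*}
which has order $d^2$; as the source and target of $\phi$ both have dimension $2$, this shows $\phi$ is an isogeny of degree $d^2$. Finally $\deg f_2=\Theta\cdot(\ker\phi_2)=\Theta\cdot E_1'$, and because complementary elliptic curves in a principally polarized abelian surface have equal exponent, $\Theta\cdot E_1'=\Theta\cdot E_2=d$, so $f_2$ has degree $d$.

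The crux of the whole argument is the exponent bookkeeping of the last paragraph, together with the identifications $\phi_1=f_{*}$ and $\phi_2=(f_2)_{*}$: one must track the principal polarizations of $J(X)$, $E_1$, and $E_2$ carefully enough to know that the norm-times-inclusion composites are multiplication by the exponent and that complementary elliptic curves share this exponent. It is exactly here that the optimality of $f$ is indispensable, since it is what guarantees that $E_2$ is connected and hence that \eqref{jacobian_ses} dualizes to a short exact sequence; the reduction of optimality to connectedness of kernels and the final kernel count are then formal.
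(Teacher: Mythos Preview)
The paper does not supply a proof of this lemma; it is stated purely as a citation of \cite{kuhn}. Your argument is correct and is essentially the original argument recast in the language of complementary abelian subvarieties in a principally polarized abelian surface (as in Birkenhake--Lange). The two ``standard facts'' you invoke---that for an elliptic subvariety $E\subset J(X)$ the composite of inclusion and norm equals $[\Theta\cdot E]$ on $E$, and that complementary elliptic curves in a principally polarized abelian surface have the same exponent---are indeed standard and are precisely the ingredients used in \cite{kuhn}; the identification $\phi_2=(f_2)_{*}$ via the universal property is also fine once you have observed that $f_2$ is nonconstant (which follows since $\phi_2$ is surjective and $\iota(X)$ generates $J(X)$).
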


%
%

\begin{cor}\label{complement_not_isogenous}
Let $X$ be a $d$-elliptic curve of genus 2, where $d$ is minimal, and let $f:X\to E_1$ be an optimal cover of degree $d$. Let $f_2:X\to E_2$ and $\phi:J(X)\to E_1\oplus E_2$ be as above. Suppose that $E_1$ and $E_2$ are not isogenous. Then, any non-constant morphism $f_0:X\to E_0$, where $E_0$ is a curve of genus 1, factors uniquely through exactly one of $f_1$ and $f_2$.
\end{cor}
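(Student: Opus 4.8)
The plan is to translate this statement about morphisms of curves into one about homomorphisms of abelian varieties, where the non-isogeny hypothesis can be exploited directly. Fix the origin of $E_0$ to be $f_0(x_0)$, so that $f_0(x_0)=0$. Since $J(X)$ is the Albanese variety of $X$ with respect to the base point $x_0$, its universal property produces a unique homomorphism $\phi_0:J(X)\to E_0$ with $\phi_0\circ\iota=f_0$. As $f_0$ is non-constant and $\iota$ is a closed embedding, $\phi_0$ is a nonzero, hence surjective, homomorphism of abelian varieties. Because $f_i=\phi_i\circ\iota$, it suffices to prove that $\phi_0$ factors as $h\circ\phi_i$ for a unique $i\in\{1,2\}$ and a unique homomorphism $h:E_i\to E_0$; composing with $\iota$ then yields $f_0=h\circ f_i$, and any factorization of $f_0$ through $f_i$ arises this way.

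First I would pin down which of $E_1,E_2$ the target $E_0$ is isogenous to. By Lemma \ref{kuhn}, $\phi=\phi_1\oplus\phi_2:J(X)\to E_1\oplus E_2$ is an isogeny, so precomposition with $\phi$ gives an isomorphism
\begin{equation*}
\operatorname{Hom}(E_1,E_0)\otimes\bQ\,\oplus\,\operatorname{Hom}(E_2,E_0)\otimes\bQ\;\xrightarrow{\ \sim\ }\;\operatorname{Hom}(J(X),E_0)\otimes\bQ.
\end{equation*}
Since $E_1$ and $E_2$ are not isogenous, at most one of the two summands on the left is nonzero; since $\phi_0\neq 0$, exactly one is. Relabeling if necessary, say $\operatorname{Hom}(E_2,E_0)=0$, so that $E_0$ is isogenous to $E_1$ and not to $E_2$.

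The heart of the argument is to upgrade this to an \emph{exact} factorization, for which I would use that $\phi_1$ and $\phi_2$ are quotient maps by connected abelian subvarieties. For $\phi_1$ this is immediate: $\ker\phi_1=E_2$ is connected by the optimality of $f$. Now $\phi_0|_{E_2}\in\operatorname{Hom}(E_2,E_0)=0$, so $E_2\subseteq\ker\phi_0$, and $\phi_0$ descends uniquely along the quotient $\phi_1:J(X)\to J(X)/E_2=E_1$ to a homomorphism $h:E_1\to E_0$ with $\phi_0=h\circ\phi_1$. Composing with $\iota$ gives $f_0=h\circ f_1$. The symmetric case, $E_0$ isogenous to $E_2$, is handled identically using $\phi_2:J(X)\to J(X)/E_1'=E_2$, where $E_1'=(\ker\phi_2)^0$ is the elliptic curve complementary to $E_2$, together with $\operatorname{Hom}(E_1',E_0)=0$ (as $E_1'$ is isogenous to $E_1$). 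Uniqueness of the factorization is then clear: were $f_0$ to factor through the other cover as well, we would produce a nonzero element of the vanishing $\operatorname{Hom}$ group.

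The step I expect to be the main obstacle is justifying that $\ker\phi_2$ is connected, so that $\phi_2$ really is the quotient by $E_1'$; without this one only obtains a factorization up to isogeny. I would establish it from the numerics of Kuhn's construction: since $f_2$ has degree $d$, the composite $\phi_2\circ\overline{\phi_2}:E_2\to E_2$ equals multiplication by $d$, which forces the intersection $E_2\cap E_1'$ of the complementary subvarieties to be exactly $E_2[d]$. Hence $E_2[d]\subseteq E_1'$ and $\ker\phi_2=E_1'+E_2[d]=E_1'$ is connected. Equivalently, this connectedness is precisely the assertion built into Lemma \ref{kuhn} that $f_2$ is itself an optimal cover of degree $d$, so I could alternatively invoke the symmetry of the construction rather than recompute.
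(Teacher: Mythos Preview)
Your argument is correct and shares the paper's core strategy: translate to homomorphisms of abelian varieties and use the non-isogeny hypothesis to force $\phi_0$ to interact with only one of the two elliptic factors. The mechanical difference is that the paper passes to the \emph{dual} map $\overline{\phi_0}:E_0\to J(X)$, observes via the exact sequence~(\ref{jacobian_ses}) that its image lies inside $E_2=\ker\phi_1$, hence $\overline{\phi_0}=\overline{\phi_2}\circ g$, and then dualizes back to get $f_0=\hat g\circ f_2$; you instead show directly that $\phi_0$ vanishes on the relevant abelian subvariety and descend along the quotient $\phi_i$. The paper's route sidesteps your side discussion of whether $\ker\phi_2$ is connected --- the exact sequence hands over the connected subvariety immediately, and the second case is then symmetric by Lemma~\ref{kuhn} without further comment --- but your resolution of that point (via the optimality of $f_2$, or equivalently the numerics $\phi_2\circ\overline{\phi_2}=[d]$) is correct.
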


\begin{proof}
We may regard $E_0$ as an elliptic curve with origin $f_0(x_0)$. We then get an induced morphism of abelian varieties $\phi_0:J(X)\to E_0$ such that $f=\phi_0\circ \iota$, and a nonzero dual morphism $\overline{\phi_0}:E_0\to J(X)$. Exactly one of the maps $\phi'_i:=\mathrm{pr}_i\circ \phi\circ \overline{\phi_0}$ must be nonzero, because $E_1$ and $E_2$ are not isogenous; assume that $\phi'_1=0$ and $\phi'_2\neq0$. Then, from the exact sequence $(\ref{jacobian_ses})$, we have that $\overline{\phi_0}$ factors as $\overline{\phi_2}\circ g$, for some non-zero $g:E_0\to E_2$. Dualizing and pre-composing with $\iota$ shows that $f_0$ factors through $f_2$. The uniqueness of $f_0$ follows from the uniqueness of the factorization $\overline{\phi_0}=\overline{\phi_2}\circ g$.

\end{proof}

\section{Classification of Admissible Covers}\label{admissible_classification}

In this section, we classify the points of $\Adm_{2/1,d}$. Let $f:X\to Y$ be such a point, and let $y_1,y_2\in Y$ denote the marked branch points. The classification proceeds by pulling back the strata usual boundary strata of $\overline{\cM_{1,2}}$ by the map $\psi_{2/1,2}:\Adm_{2/1,d}\to\overline{\cM_{1,2}}$, and studying the admissible covers where the target curve is of each possible topological type.

The base curve $(Y,y_1,y_2)$ is of one of the following types:
\begin{enumerate}
\item[(1)] a smooth curve of genus 1 with two marked points,
\item[(2)] a smooth curve of genus 1 with a $\bP^1$ attached at one point, with both marked points on the rational component,
\item[(3)] a nodal cubic with two marked points,
\item[(4)] a nodal cubic with a $\bP^1$ attached at one point, with both marked points on the smooth component, or
\item[(5)] the union of two $\bP^1$s attached at two points, with one marked point on each component.
\end{enumerate}

Covers of type 1 are ``generic'': they lie in the open locus (Hurwitz space) $\Adm_{2/1,d}^{\circ}$ of covers of smooth curves. Covers of types 2 and 3 may be regarded as boundary divisors (curves) on $\Adm_{2/1,d}$, and are the most important for us when we restrict $\pi_{2/1,d}$ to test families in the boundary of $\overline{\cM_{2}}$. Covers of types 4 and 5 are specializations of covers of types 2 and 3, and will not appear in our test families.

We now describe all admissible covers $f:X\to Y$ in each of these five cases. 

\subsection{Type 1}\label{base_smooth}

If $Y$ is smooth, then $X$ must be smooth, see Remark \ref{preimage_adm_smooth}.

\subsection{Type 2}

Let $Y_i$ be the component of $Y$ of genus $i$, and let $y=Y_0\cap Y_1$. We classify admissible covers of $Y$ into three sub-types according to the topology of $f^{-1}(Y_1)$, which is a disjoint union of smooth curves. There are three possibilities: $f^{-1}(Y_1)$ may consist of a single genus 2 curve (type 2A/2A$'$), a single genus 1 curve (type 2B), or two genus 1 curves (type 2C). The resulting covers are shown in Figure \ref{fig:type2cover}, with details given below.

\begin{figure}
  \caption{Covers of types 2A, 2A$'$, 2B, 2C.}
  \centering
    \includegraphics[width=\textwidth]{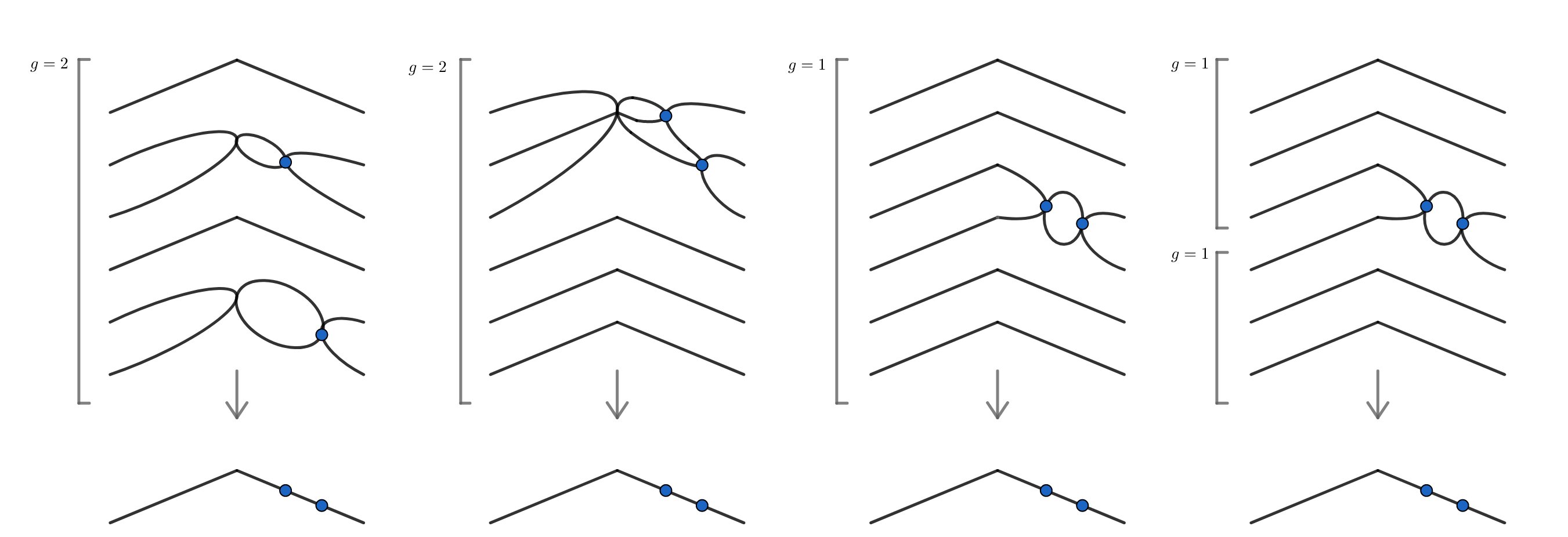}
    \label{fig:type2cover}
\end{figure}

\subsubsection{Type 2A/2A$'$}
Suppose $X$ has a genus 2 component $X_2$ mapping to $Y_1$. Then, the branch divisor of $f|_{X_2}$ must be equal to $2y$, and hence $f^{-1}(y)\subset X_2$ contains two points $x',x''\in X_2$ with $e_{x'}=e_{x''}=2$ (type 2A), or a unique point $x\in X_2$ with $e_{x'}=3$ (type 2A$'$). Then, at each point $x\in f^{-1}(y)$, we must attach a smooth rational component $R_x$ mapping to $Y_1$ with degree at least $e_x$. The $R_x$ must be pairwise disjoint, and can only contain two additional ramification points in total, so in fact the degree of $R_x$ over $Y_0$ is equal to $e_x$.

In both cases, we have $\pi_{2/1,d}([f])=[X_2]\in\cM_2$.

\subsubsection{Type 2B}
Suppose $X$ has a single genus 1 component $X_1$ mapping to $Y_1$. The map $f|_{X_1}:X_1\to Y_1$ must be unramified. The ramification divisor of $f$ has degree 2, so $f^{-1}(Y_0)$ must be a union of rational curves, attached to the points of $X_1$ lying over $y$. Exactly one pair of these points lie on a common rational component $X'$ of degree 2 over $Y_0$; all other components of $f^{-1}(Y_0)$ map isomorphically to $Y_0$.

In this case, $\pi_{2/1,d}([f])$ is the curve obtained by gluing two points of $X_1$ together. Note that the glued points both lie in the kernel of an isogeny of degree $d$, so their difference must be $d$-torsion.

\subsubsection{Type 2C}
Suppose $X$ has two genus 1 components $X_1,X'_1$ mapping to $Y_1$. First, note that $X_1$ and $X'_1$ must be disjoint, or else they would intersect at a node $x$ lying over $y$, but neither branch at $x$ would map to the branch at $y$ on $Y_0$. As before in type 2B, $f$ must be unramified over $Y_1$, and $f^{-1}(Y_0)$ must be a union of rational curves, attached to the points of $X_1$ lying over $y$. One of these rational curves must connect $X_1$ to $X'_1$ in order for $X$ to be connected, and its degree over $Y_0$ is 2; all other rational components of $X$ map isomorphically to $Y_0$, and the rational components of $X$ are pairwise disjoint.

In this case, $\pi_{2/1,d}([f])=[X_1\cup X'_1]$, where the two components are attached at a node.

\subsection{Type 3}

Throughout this discussion, we fix the normalization map $\nu:\bP^1\to Y$, and let $\widetilde{f}:\widetilde{X}\to \bP^1$ be the map obtained by normalizing $X$ and $Y$. Let $y_0\in Y$ denote the node, and let $y',y''\in\bP^1$ be its pre-images under $\nu$. By abuse of notation, we let $y_1,y_2\in\bP^1$ denote the pre-images of $y_1,y_2\in Y$. Then, $\widetilde{f}:\widetilde{X}\to \bP^1$ is simply branched over $y_1,y_2$, possibly branched over $y',y''$, and unramified everywhere else.

Let $\widetilde{X}_1,\ldots,\widetilde{X}_n$ be the components of $\widetilde{X}$, and let $d_i$ be the degree of $\widetilde{X}_i$ over $\bP^1$. Let $s_i$ denote the total number of points of $\widetilde{X}_i$ lying over $y'$ and $y''$. Then, the number of points of $f^{-1}(y)$ is $t=\frac{1}{2}\sum s_i$. We have $p_a(\widetilde{X})\ge1-n$. On the other hand, $\widetilde{X}$ is is the blowup of $X$ at $t$ points, so $p_a(\widetilde{X})=2-t$. Thus $2-t\ge 1-n$, hence $t\le n+1$. On the other hand, $s_i\ge2$ for each $i$, so $t\ge n$.

Because $t$ is an integer, the only possibilities for the $s_i$ (up to re-indexing) are: $s_1=4$ and $s_i=2$ for all $i\ge2$ (type 3A), $s_1=s_2=3$ and $s_i=2$ for all $i\ge3$ (type 3B/3B$'$), or $s_i=2$ for all $i$ (type 3C). The possibilities for the source curve $X$, are shown in Figure \ref{fig:type3cover}, with justification below.

\begin{figure}
  \caption{Sources for covers of types 3A, 3B, 3B$'$, 3C. Sources for covers of types 4A$'$, 4B, 4B$'$, 4C$'$ may be obtained by attaching rational curves. Sources for covers of types 4A, 4C by additionally replacing the two smooth ramification points with a smooth triple ramification point.}
  \centering
    \includegraphics[width=\textwidth]{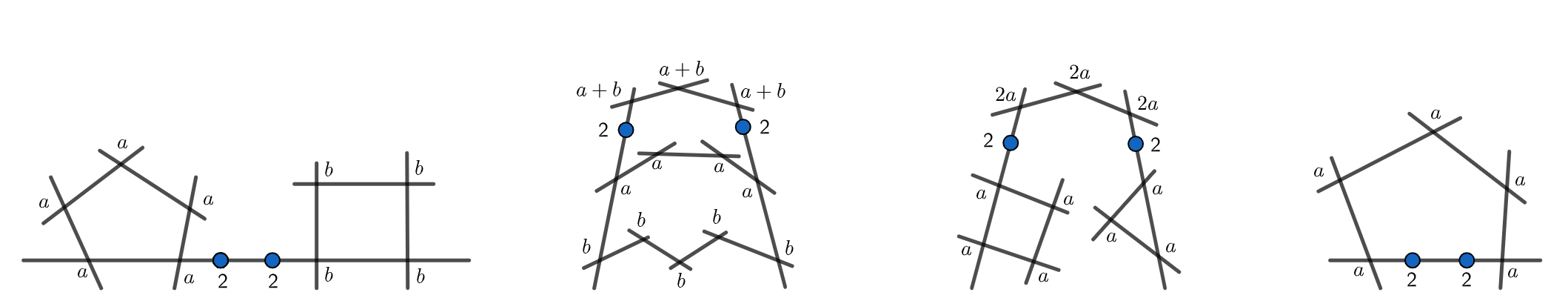}
    \label{fig:type3cover}
\end{figure}

\subsubsection{Type 3A}
We have a single component $X_0\subset X$ with four points mapping to $y_0\in Y$; all other components of $X$ have two points mapping to $y_0$. As $X_0$ is connected, we see that $X$ must consist of two disjoint chains of curves $X_1,\ldots,X_p$ and $X'_1,\ldots,X'_q$ attached at two points to $X_0$. We allow possibility that at least one of $p,q$ is equal to 0; in this case, $X_0$ has a non-separating node. We deal here with the case $p,q>0$, and remark at the end that the ``degenerate'' cases are similar. With this assumption, we must have that all components of $X$ have genus 0.

Each of the components of $X$ other than $X_0$ is unramified over $\bP^1$ away from the two nodes; thus, each $X_i\to\bP^1$ is of the form $x\mapsto x^a$ for some $a$ (independent of $i$), branched over $y',y''$. Similarly each $X'_j\to\bP^1$ is of the form $x\mapsto x^b$ for some $b$ (independent of $j$), branched over $y',y''$.

Now, $X_0$ has degree $a+b$ over $\bP^1$, and each of $y',y''$ has two points in its pre-image, of ramification indices $a$ and $b$. (Note that when $a=b$, we have three ways of distributing the nodes on $X_0$ over $y'$ and $y''$). By Riemann-Hurwitz, there are two additional simple ramification points on $X_0$ mapping to $y_1,y_2\in Y$.

The situation is similar when at least one of $p,q$ is equal to 0: the chains of smooth rational curves attached to $X_0$ may be replaced with a non-separating node on $X_0$, and the normalization of $X_0$ maps to $\bP^1$ as before.

In this case, $\pi_{2/1,d}([f])\in\Delta_{00}-[C_{001}]$.

\subsubsection{Type 3B/3B$'$}
We have two components $X_1,X_2$ containing three points each mapping to $y_0\in Y$. $X_1$ and $X_2$ are connected either by three possibly empty chains of curves (type 3B), or one chain of curves, with chains of rational curves attached to each $X_i$ at two points (type 3B$'$). As in type 3A, the latter chains may be empty, in which case at least one of $X_1$ or $X_2$ may become nodal. We first assume this is not the case.

Then, as before, every component of $X$ must have genus 0, and all components of $X$ other than $X_1$ and $X_2$ map to to $\bP^1$ via $x\mapsto x^r$, for some $r\ge1$. The three nodes on each $X_i$ are the only points mapping to $y_0$, so $f|_{X_i}$ must be totally ramified at one of them, and the sum of the ramification indices at the other two must be equal to the degree of $f|_{X_i}$. By Riemann-Hurwitz, there is one additional simple ramification point on each $X_i$.

If, in type 3B', two nodes of $X_1$ or $X_2$ are connected by an empty chain of curves (that is, glued to each other), then we obtain similar covers: $X_i$ becomes a nodal cubic, and its normalization maps to $Y$ as above.

In this case, we have $\pi_{2/1,d}([f])=[C_{000}]$, and in type 3B$'$, we have $\pi_{2/1,d}([f])=[C_{001}]$.

\subsubsection{Type 3C}

Here, $X$ must consist a smooth genus 1 component $X_1$ attached at two points to a chain of $p$ rational curves, each of which maps to $Y_0$ via $x\mapsto x^a$. Here, we need $a\ge2$, as $X_1$ will map to $\bP^1$ of $Y$ with degree $a$. The map $f:X_1\to\bP^1$ is totally ramified at the two nodes on $X_1$, and simply ramified at two other points on $X_1$.

As will be important for us later, we may also have $p=0$, in which case $X$ is irreducible with a single node, and its normalization $X_1$ maps to $\bP^1$ as above.

In this case, $\pi_{2/1,d}([f])$ is the nodal curve obtained by gluing two $a$-torsion points of $X_1$ together.
\subsection{Type 4}

As in type 2, let $Y_i$ be the component of $Y$ of genus $i$, and let $y=Y_0\cap Y_1$. By studying the restriction of $f$ over $Y_1$ in the same way as in type 3, we have the following possibilities:
\begin{itemize}
\item $f^{-1}(Y_1)$ is isomorphic to a source curve $X$ from types 3A, 3B/3B', or 3C, disregarding for now the smooth ramification points. (types 4A/4A', 4B/4B', 4C/4C')
\item $f^{-1}(Y_1)$ is isomorphic to a cycle of $p$ smooth rational curves, for $p\ge1$ (type 4D/4D$'$). When $p=1$, $f^{-1}(Y_1)$ is a nodal cubic.
\item $f^{-1}(Y_1)$ is isomorphic to two disjoint cycles of smooth rational curves of lengths $p,q$ (type 4E). As above, we allow $p=1$ and $q=1$.
\end{itemize}

We will see that in the first case, the covers of type 4 may essentially be constructed from those of type 3, by allowing the smooth ramification points to come together. However, the covers of types 4D, 4D$'$, and 4E are new: their source curves are shown in are shown in Figure \ref{fig:type4cover}.

\begin{figure}
  \caption{Sources for covers of types 4D, 4D$'$, 4E}
  \centering
    \includegraphics[width=\textwidth]{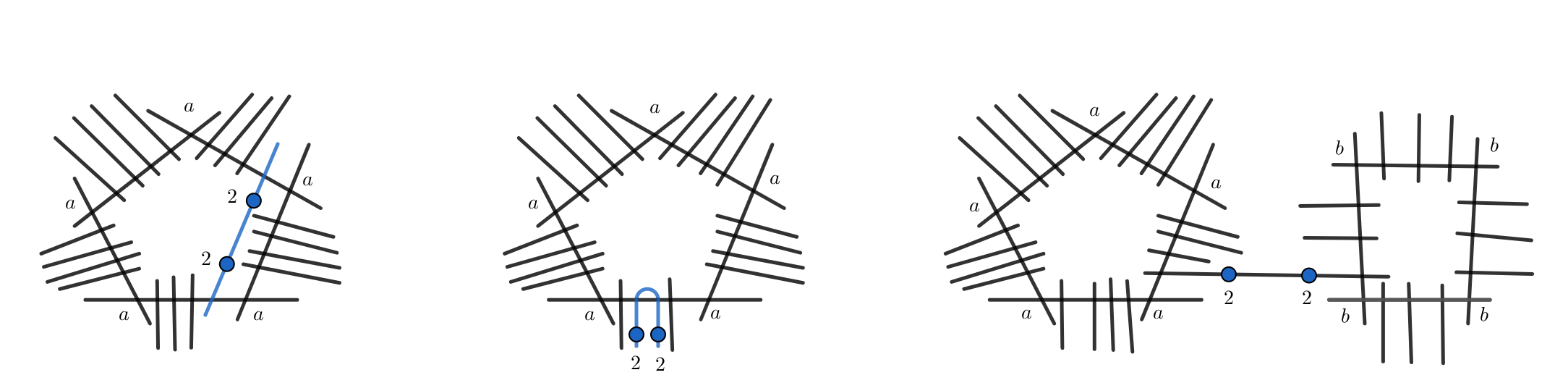}
    \label{fig:type4cover}
\end{figure}

\subsubsection{Type 4A/4A$'$}

Here, $f^{-1}(Y_1)$ is isomorphic to the source curve $X$ in type 3A. However, $f$ must now be unramified away from the nodes on $X_0$, so either $X_0$ acquires a triple ramification point over $y$ (type 4A), or its two additional ramification points must both lie over $y$ (type 4A$'$). In addition, at each $x\in f^{-1}(y)$, we must attach a smooth rational component to $f^{-1}(Y_1)$ mapping to $Y_0$ with degree $e_x$. In both cases, $\pi_{2/1,d}([f])\in\Delta_{00}-[C_{001}]$, as in type 3A.

\subsubsection{Type 4B/4B$'$}

Here, $f^{-1}(Y_1)$ is isomorphic to a source curve $X$ in types 3B/3B$'$. The simple ramification points on $X_1,X_2$ are now constrained both to lie over $y$, and as in type 4A/4A$'$, at each $x\in f^{-1}(y)$, we attach a smooth rational component to $f^{-1}(Y_1)$ mapping to $Y_0$ with degree $e_x$. The contractions of $X$ are as in types 4B/4B$'$.

\subsubsection{Type 4C/4C$'$}

Here, $f^{-1}(Y_1)$ is isomorphic to a source curve $X$ in type 3C. As in the previous two cases, we attach smooth rational curves to the pre-images of $x$ on $f^{-1}(Y_1)$, mapping to $Y_0$ with degree $e_x$. In addition, the two additional ramification points on the genus 1 component $X_1$ either come together to a triple ramification point over $y$ (type 4C), or are constrained both to lie over $y$ (type 4C$'$). 

As in type 3C, $X$ contracts to an irreducible genus 2 curve $X^c$ with one node. The normalization $X_1$ of $X^c$ is an elliptic curve with a cover of $\bP^1$ branched over only three points, so if we fix the degree of our admissible cover $f$, there are only finitely many possible isomorphism classes of $X_1$.

\subsubsection{Type 4D/4D$'$}

Suppose now that $f^{-1}(Y_1)$ is a cycle of smooth rational curves (or a nodal cubic). The restriction of $f$ to these rational components will be unramified over $y$, and we will again attach smooth curves at the points of $f^{-1}(y)$, which we find must be rational and disjoint. However, to make $X$ have the correct genus, two of these rational components must \textit{coincide}, i.e., we have a rational bridge between two distinct components of $f^{-1}(Y_1)$ (type 4D) or a rational bridge between two points on the same component of $f^{-1}(Y_1)$ (type 4D$'$). In both cases, the bridge must map to $Y_0$ with degree 2, branched over $y_1,y_2\in Y_0$.

In type 4D, we have $\pi_{2/1,d}{[f]}=[C_{000}]$, and in type 4D$'$, we have $\pi_{2/1,d}([X])\in\Delta_{00}-[C_{001}]$.

\subsubsection{Type 4E}

We argue as in the previous cases. If $p,q>1$, then the components of $f^{-1}(Y_1)$ must have genus 0, and map to $Y_1$ via $x\mapsto x^r$ for some $r\ge1$. We attach components of genus 0 to the pre-images of $y$, and in order to make $X$ connected, one such component is a bridge between the two cycles of $f^{-1}(Y_1)$ mapping to $Y_0$ with degree 2; the rest map isomorphically. As before, if instead $p=1$ or $q=1$, the corresponding cycle of smooth rational curves is replaced by a single nodal cubic. We find that $\pi_{2/1,d}{[X]}=[C_{001}]$.

%

\subsection{Type 5}

Let $Y=Y_1\cup Y_2$, with $Y_1\cap Y_2=\{y,y'\}$ and $y_i\in Y_i$. We first show that every component of $X$ has genus 0. Indeed, suppose that some smooth component $X_1$ of $X$ maps to $Y_1$. If the degree of $f|_{X_1}:X_1\to Y_1$ is $d_1$, then its branch divisor has degree at most $2d_1-1$, because the coefficients of the branch divisor at $y_1,y,y'$ are at most $1,d_1-1,d_1-1$, respectively, and zero everywhere else. Thus, by Riemann-Hurwitz, $X_1$ must be rational and coefficients of the branch divisor must be either $1,d_1-2,d_1-1$ or $1,d_1-1,d_1-2$. Suppose we are in the first case: then, $y$ has two preimages $x,\overline{x}\in X_1$ such that $e_{x}+e_{\overline{x}}=d_1$, and $y'$ has a single preimage $x'\in X_1$ with $e_{x'}=d_1$.

Let $n$ be the total number of components of $X$. Then, there are $3n/2$ points in total lying over $y$ and $y'$. Blowing up at these $3n/2$ points leaves $n$ disjoint curves of genus 0, and hence $2p_a(X)=-n+1+3n/2=n/2+1$. Thus $n=2$, and $X\cong C_{000}$; the ramification indices at the three nodes of $X$ must be $a,b,d/2$, where $a+b=d/2$, and there must be one smooth simple ramification point on each component of $X$.

\section{Intersection with the first test family}\label{tE_intersection}

We first define the test family, $t_E$, which we intersect with $\pi_{2/1,d}$. We fix, throughout this section, a general elliptic curve $(E,0)$. Let $t_E:\overline{\cM_{1,1}}\to\overline{\cM_2}$ be the morphism of stacks sending $(E',0')$ to the genus 2 curve $E\cup E'/(0\sim 0')$.

\begin{lem}\label{compare_delta01_to_map_from_M11}
We have $[t_E]=2\delta_{01}$ in $A^{2}(\overline{\cM_2})$.
\end{lem}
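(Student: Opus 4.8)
The plan is to use Mumford's basis (Theorem~\ref{M2_relations}): since $A^2(\overline{\cM_2})$ is spanned by $\delta_{00}$ and $\delta_{01}$, I would write $[t_E]=a\delta_{00}+b\delta_{01}$ and determine the two coefficients by pairing against the divisor classes $\delta_0$ and $\delta_1$. Feeding the intersection numbers of Theorem~\ref{M2_relations} into this gives
\begin{align*}
\delta_0\cdot[t_E]=\frac{b-a}{4}\,p,\qquad \delta_1\cdot[t_E]=\frac{6a-b}{48}\,p,
\end{align*}
so the lemma reduces to computing the two left-hand sides geometrically and solving the resulting $2\times 2$ linear system.

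To compute $\delta_0\cdot[t_E]$, I would use that the fiber $E\cup E'$ acquires a non-separating node precisely when one of its components degenerates; since $E$ is fixed and smooth, this happens only at the boundary point $[\infty]\in\overline{\cM_{1,1}}$, where $E'$ becomes a nodal cubic. Working in the versal deformation, the pullback $t_E^{*}\delta_0$ is the smoothing parameter of that node, so $t_E^{*}\delta_0=[\infty]$ with multiplicity one. By the projection formula $\delta_0\cdot[t_E]=\deg(t_E^{*}\delta_0)\cdot p$; since $[\infty]$ has degree $\tfrac12$ on $\overline{\cM_{1,1}}$ (its automorphism group is $\bZ/2$) and $\deg p=1$, this yields $\delta_0\cdot[t_E]=\tfrac12 p$. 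The only subtlety is keeping track of the stacky degree of the boundary point.

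The computation of $\delta_1\cdot[t_E]$ is the main obstacle, because $\mathrm{im}(t_E)\subseteq\Delta_1$, so the pairing is really a self-intersection and must be read off from the normal bundle of $\Delta_1$. Writing $t_E=b_1\circ j_E$ with $j_E\colon[E']\mapsto([E],[E'])$, I would use that $b_1^{*}\delta_1=c_1(N_{\Delta_1})$ pulls back to $-\lambda_1^{(1)}-\lambda_1^{(2)}$ on $\overline{\cM_{1,1}}\times\overline{\cM_{1,1}}$, since the smoothing parameter of the gluing node lives in the tensor product of the two tangent lines and $T_0E$ is dual to the Hodge line. Restricting along $j_E$ kills the first factor (it is pulled back from the fixed point $[E]$) and leaves $-\lambda_1$ on $\overline{\cM_{1,1}}$, of degree $-\tfrac1{24}$; hence $\delta_1\cdot[t_E]=-\tfrac1{24}p$. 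Substituting $\delta_0\cdot[t_E]=\tfrac12 p$ and $\delta_1\cdot[t_E]=-\tfrac1{24}p$ into the two relations above gives $b-a=2$ and $6a-b=-2$, whence $a=0$ and $b=2$, i.e. $[t_E]=2\delta_{01}$.

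As a sanity check, and an alternative route that sidesteps the normal-bundle computation, one can instead deform the fixed curve: both $t_E$ and $b_{01}$ are restrictions of $b_1$ to the slices $\{[E]\}\times\overline{\cM_{1,1}}$ and $\{[\infty]\}\times\overline{\cM_{1,1}}$, and since $[E]$ and $[\infty]$ are rationally equivalent $0$-cycles on $\overline{\cM_{1,1}}$ (both of degree $\tfrac12$), pushing forward gives $[t_E]=[b_{01}]=2\delta_{01}$, the last equality because $b_{01}$ is generically $2$-to-$1$ onto $\Delta_{01}$.
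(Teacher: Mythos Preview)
Your ``sanity check'' at the end is precisely the paper's proof: the author observes that $[E]$ and the nodal cubic are rationally equivalent points of $\overline{\cM_{1,1}}$, pushes forward along $b_1$ to conclude $[t_E]=[b_{01}]$, and then uses that $b_{01}$ is generically $2$-to-$1$ onto $\Delta_{01}$. So your alternative route is not an alternative at all---it is the intended argument, and it settles the lemma in two lines.

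Your main approach via pairing against $\delta_0$ and $\delta_1$ is also correct, but it is considerably longer and requires more input: you need the identification of $b_1^*N_{\Delta_1}$ with $\pi_1^*\mathbb{E}^\vee\otimes\pi_2^*\mathbb{E}^\vee$ to handle the excess intersection with $\delta_1$, and you must keep track of the stacky degree $\tfrac12$ of the boundary point in $\overline{\cM_{1,1}}$ for the $\delta_0$ pairing. Both computations are right (and indeed consistent with the relations of Theorem~\ref{M2_relations}), but neither is needed once you have the rational-equivalence argument. The only advantage of your longer route is that it illustrates the same test-curve philosophy used later in the paper for the harder classes $[\eta_E]$ and $[\pi_{2/1,d}]$; for the present lemma, however, it is overkill. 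I would lead with the short argument and drop the rest.
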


\begin{proof}
The classes of $[E]$ and a nodal cubic are rationally equivalent in $\overline{\cM_{1,1}}$. Pushing forward by the finite morphism $\overline{\cM_{1,1}}\times\overline{\cM_{1,1}}\to\overline{\cM_2}$, the classes of $b_{01}:\overline{\cM_{1,1}}\to\overline{\cM_2}$ (see \S \ref{chow_ring_m2}) and $t_E$ are equal. Because $b_{01}$ has degree 2 over its image $\Delta_{01}$, we have $[t_E]=2\delta_{01}$.
\end{proof}

We now consider the intersection of $t_E$ and $\pi_{2/1,d}$. Because $\Adm_{2/1,d}$ is Cohen-Macaulay and $\overline{\cM_2}$ is smooth, intersection products with $\pi_{2/1,d}$ are fiber products, as long as the intersections are proper, i.e., in the expected dimension.

\begin{lem}\label{groupoid_int_with_delta01}
The groupoid $(\overline{\cM_{1,1}}\times_{\overline{\cM_2}}\Adm_{2/1,d})(\Spec(\bC))$ is a set, i.e., its objects have no non-trivial automorphisms. (Here, the fiber product is of the morphisms $t_E$ and $\pi_{2/1,d}$.)
\end{lem}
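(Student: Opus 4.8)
The plan is to unwind the automorphisms of an object of the fiber product in terms of the two factors and then eliminate them using the classification of Section \ref{admissible_classification} together with the genericity of $E$. An object over $\Spec(\bC)$ is a triple $((E',0'),[f],\alpha)$ with $\alpha\colon E\cup E'/(0\sim 0')\xrightarrow{\sim}\pi_{2/1,d}([f])$, and an automorphism of it is a pair $(g,h)\in\operatorname{Aut}(E',0')\times\operatorname{Aut}([f])$ satisfying $\pi_{2/1,d}(h)\circ\alpha=\alpha\circ t_E(g)$. Since $t_E(g)$ is by definition the automorphism of $E\cup E'/(0\sim0')$ acting as the identity on the fixed component $E$ and as $g$ on $E'$, it is determined by $g$; hence if $(g,h)$ and $(g'',h)$ both lie in this group then $t_E(g)=\alpha^{-1}\pi_{2/1,d}(h)\alpha=t_E(g'')$ forces $g=g''$, so the second projection embeds the automorphism group into $\operatorname{Aut}([f])$. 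It therefore suffices to show that the only $h\in\operatorname{Aut}([f])$ for which $\pi_{2/1,d}(h)$, transported through $\alpha$, fixes the distinguished component $E$ pointwise is the identity.

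First I would pin down the covers that can occur. The source is reducible with two smooth genus $1$ components, hence lies in $\Delta_1$; reading off the stabilized sources listed in the classification, the only type whose image lies in $\Delta_1$ is type 2C (no listed image lies in $\Delta_{01}$, so no point of the fiber product lies over the boundary point of $\overline{\cM_{1,1}}$, and we may assume $E'$ is smooth). For such a cover, $X=X_1\cup X_1'\cup(\text{rational components})$, where $X_1,X_1'$ are smooth elliptic curves mapping by \emph{unramified} covers to the genus $1$ component $Y_1$ of $Y$ and joined by a degree-$2$ rational bridge; stabilization contracts the rational curves and glues $X_1$ to $X_1'$ at the two bridge points, which become the node and are the origins of the two elliptic curves.

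Now write $h=(\sigma,\tau)$ and suppose $\pi_{2/1,d}(h)$ fixes the component identified with $E$, say $X_1$, pointwise; then $\sigma|_{X_1}=\operatorname{id}$. From $f\circ\sigma=\tau\circ f$ and the surjectivity of $f|_{X_1}\colon X_1\to Y_1$ we obtain $\tau|_{Y_1}=\operatorname{id}$, and consequently $\sigma|_{X_1'}$ is a deck transformation of the connected \'etale cover $f|_{X_1'}\colon X_1'\to Y_1$. A nontrivial deck transformation of a connected cover acts freely, yet $\sigma|_{X_1'}$ fixes the bridge point (the origin $0'$), so $\sigma|_{X_1'}=\operatorname{id}$ and $\pi_{2/1,d}(h)=\operatorname{id}$ on the source. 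Finally I would check that $\pi_{2/1,d}(h)=\operatorname{id}$ forces $h=\operatorname{id}$: once both $X_1,X_1'$ are fixed pointwise the attaching nodes are fixed, and the only nontrivial covering automorphism of the degree-$2$ bridge is its deck involution, which interchanges the two bridge points and hence would swap $X_1$ with $X_1'$ — incompatible with fixing them. This rigidifies the remaining rational tails and forces $\tau=\operatorname{id}$, so $h=\operatorname{id}$ and therefore $g=\operatorname{id}$.

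The main obstacle is the last two steps: correctly isolating type 2C as the only relevant stratum (which rests on reading the classification of Section \ref{admissible_classification}) and showing that fixing the general component $E$ pointwise propagates to rigidity of the whole cover. The genericity of $E$ enters precisely to guarantee that the two elliptic components cannot be interchanged by an automorphism compatible with the marking $\alpha$: such a swap moves the distinguished copy of $E$, so it is excluded regardless of whether $E\cong E'$, and the freeness of deck transformations of the \'etale cover $X_1'\to Y_1$ then eliminates the remaining possibility.
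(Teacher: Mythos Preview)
Your proof is correct and follows essentially the same route as the paper's: identify the relevant covers as type 2C, observe that the isomorphism $\alpha$ pins down the component $X_1\cong E$ pointwise, deduce $\tau|_{Y_1}=\mathrm{id}$ from surjectivity of $f|_{X_1}$, and then use that a nontrivial deck transformation of the \'etale cover $X_1'\to Y_1$ is fixed-point-free to kill $\sigma|_{X_1'}$. Your additional bookkeeping on the rational components (the bridge and tails) is more explicit than the paper's, which simply stops after handling $X_1'$; this is fine and arguably cleaner.

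One small imprecision: your sentence ``no listed image lies in $\Delta_{01}$'' is not literally true---types 3B$'$, 4B$'$, and 4E all land on $[C_{001}]\in\Delta_{01}$. What you actually need (and what the classification gives) is that no admissible cover has stabilized source isomorphic to $E\cup(\text{nodal cubic})$ with $E$ smooth and general; since every image in $\Delta_{01}$ from the classification is the specific curve $C_{001}$ (two nodal cubics), this is immediate. The conclusion that only type 2C is relevant stands.
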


\begin{proof}
An object of this groupoid consists of the data of an elliptic curve $(E',0')$, an admissible cover $f:X\to Y$, and an isomorphism of $t_{E}([E'])\cong X^c$, where $X^c$ is the curve obtained by contracting unstable components of $X$. 

By \S \ref{admissible_classification}, $f$ is a cover of type 2C. Thus, $Y=Y_0\cup Y_1$ with $Y_i$ smooth of genus $i$ and $Y_0\cap Y_1=y$, and both marked branch points $y_1,y_2$ lie on $Y_0$. Moreover, $X$ consists of two smooth genus 1 components $X_1,X'_1$ covering $Y_1$, a smooth rational component connecting $X_1$ and $X'_1$ and covering $Y_0$ via a degree 2 map branched over $y_1,y_2$, and $d-2$ other rational components mapping isomorphically to $Y_0$ and attached to $X_1$ or $X'_1$ at the preimages of $y$. Then, $X^c$ is simply the union $X_1\cup X'_1$, where the two components intersect transversely at $x$. The isomorphism $t_{E}([E'])\cong X^c$ is thus an isomorphism $\phi:E\cup E'\cong X_1\cup X'_1$. 

An automorphism of such an object would then consist of an automorphism of $E'$ and an automorphism $f$ compatible with $\phi$. Note that we may not apply a non-trivial automorphism to $X_1$ because $\phi(E)=X_1$, and thus we may not apply a non-trivial automorphism to $Y_1$. A non-trivial automorphism of $X'_1$ over $Y_1$ has no fixed points, so cannot be compatible with an automorphism of $E'$. Therefore, $(E',f,\phi)$ has no non-trivial automorphisms.
\end{proof}

\begin{lem}\label{adm_delta01_mult2}
At every point of intersection of $\pi_{2/1,d}$ and $t_E$, the intersection multipliticity is 2.
\end{lem}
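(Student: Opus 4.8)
The plan is to compute each intersection multiplicity as the length of the local ring of the fiber product $\overline{\cM_{1,1}}\times_{\overline{\cM_2}}\Adm_{2/1,d}$ at the point in question. This is legitimate because $\overline{\cM_2}$ is smooth, $\Adm_{2/1,d}$ is Cohen--Macaulay, and the intersection is proper (zero-dimensional, by the count $2+1-3=0$); moreover Lemma \ref{groupoid_int_with_delta01} rules out stacky contributions, so the multiplicity is genuinely $\dim_{\bC}$ of this Artinian local ring. First I would fix an intersection point, i.e.\ a type 2C cover $f:X\to Y$ together with an identification $X^c\cong E\cup E'$, and recall from the type 2C analysis that $Y=Y_0\cup Y_1$, that $X_1,X_1'$ are the two elliptic components covering $Y_1$, and that the node of $X^c$ is the image of the contracted degree-$2$ bridge joining them.

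Next I would write down local coordinates on both sides. The only node of $Y$ is $y=Y_0\cap Y_1$, and over it $X$ has $d$ nodes, all of ramification index $1$ (the bridge and the $d-2$ isomorphic components are unramified over $y$). By Proposition \ref{adm_defos} the relations $t_1=t_{1,j}$ then collapse all smoothing parameters of the nodes of $X$ over $y$ to the single target parameter $t_1$, so $\Adm_{2/1,d}$ is \emph{smooth} of dimension $2$ at $[f]$, with coordinates $(t_1,t_2)$, where $t_1$ smooths $y$ and $t_2$ is the modulus of $Y_1$. On the target, near $X^c\in\Delta_1$, I would use coordinates $(s,\tau,\tau')$, where $s$ is the smoothing parameter of the node of $X^c$ and $\tau,\tau'$ are the moduli of the two elliptic tails; in these coordinates $t_E$ is the locus $\{s=0,\ \tau=\tau_0\}$, with $E$ the fixed curve of modulus $\tau_0$ and $\tau'$ free.

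The heart of the argument is to express $\pi_{2/1,d}$ in these coordinates. Since $X_1\to Y_1$ and $X_1'\to Y_1$ are isogenies, as $t_2$ varies both $\tau$ and $\tau'$ vary nontrivially to first order, while $s$ vanishes along $\{t_1=0\}$, as the fibers there are exactly the boundary curves $X^c$. The crucial point---and what I expect to be the main obstacle---is to show that $s=t_1^{2}\cdot(\text{unit})$. This is precisely where the non-transversality lives: the node of $X^c$ is the image of the contracted bridge $B$, and $B$ meets each of $X_1,X_1'$ at a node of ramification index $1$, hence smoothed linearly in $t_1$ by Proposition \ref{adm_defos}. Contracting a $\bP^1$-bridge whose two nodes are smoothed by parameters $u$ and $v$ produces a single node smoothed by the product $uv$; here $u,v$ are each $t_1$ up to a unit, giving $s=t_1^{2}\cdot(\text{unit})$. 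Making this rigorous---tracking the Knudsen contraction through the explicit deformation ring of Proposition \ref{adm_defos}---is the one genuinely computational step.

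Granting $s=t_1^{2}\cdot(\text{unit})$ and that $\tau-\tau_0$ has nonzero linear part in $t_2$, the fiber-product local ring is $\bC[[t_1,t_2]]/(s,\ \tau-\tau_0)$. The relation $\tau-\tau_0=0$ solves for $t_2$ as a power series in $t_1$, leaving $\bC[[t_1]]/(t_1^{2})$, which has length $2$. Hence every intersection multiplicity equals $2$, as claimed.
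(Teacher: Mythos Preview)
Your proposal is correct and follows essentially the same approach as the paper: both arguments identify the intersection point as a type 2C cover, use Proposition~\ref{adm_defos} to see that $\Adm_{2/1,d}$ is smooth there with coordinates given by the modulus of $Y_1$ and the smoothing parameter of the node $y$, and then compute the induced map on complete local rings to $\overline{\cM_2}$. The paper phrases the key step as ``contracting the rational bridge \dots\ introduces a rational double point in the total space,'' which is exactly your product rule $s=uv=t_1^2\cdot(\text{unit})$ for the smoothing parameter of the contracted node; your formulation is perhaps more explicit, but the content is identical, and the resulting local ring $\bC[[t_1]]/(t_1^2)$ is the same.
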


\begin{proof}
Suppose we are given such a point, as described in the proof of Lemma \ref{groupoid_int_with_delta01}. By Proposition \ref{adm_defos}, $\Adm_{2/1,d}$ is smooth at $[f:X\to Y]$, and the deformation theory of $f$ is equivalent to that of $(Y,y_1,y_2)$, where $y_1,y_2$ are the branch points of $f$. The two directions for first-order deformations of $(Y,y_1,y_2)$ vary the elliptic curve $Y_1$ and smooth the node at $y$. The former maps to a first order deformation on $\overline{\cM_2}$ varying the elliptic curves $X_1,X'_1$ simultaneously. A deformation of $Y$ smoothing of the node at $y$ simultaneously smooths all of the nodes of $X$ above $y$. Contracting the rational bridge between $X_1$ and $X'_1$ in this deformation introduces a rational double point in the total space of the induced deformation of $X$, so the node of $X^c=X_1\cup X'_1$ is smoothed to order 2.

Let $\bC[[x,y,z]]$ be the complete local ring of $\overline{\cM_2}$ at $[X^c]$, so that the quotient $\bC[[x,y,z]]\to\bC[[z]]$ is induced by $t_E:\overline{\cM_{1,1}}\to\overline{\cM_2}$. Let $\bC[[s,t]]$ be the complete local ring of $\Adm_{2/1,d}$ at $[f]$. By the above, we may choose the coordinates $s,t$ such that:
\begin{itemize}
\item the quotient map $\bC[[s,t]]\to\bC[[s]]$ corresponds to the deformation varying $X_1,X'_1$ simultaneously according to the versal deformation of $Y_1$,
\item $t$ is a smoothing parameter for the node $X_1\cap X'_1$, and
\item the induced morphism on complete local rings $\bC[[x,y,z]]\to\bC[[s,t]]$ to send $x\mapsto s,y\mapsto t^2$. 
\end{itemize}
The complete local rings at points of intersection of $\pi_{2/1,d}$ and $t_E$ are thus isomorphic to $\bC[t]/t^2$, so the conclusion follows.
\end{proof}

\begin{cor}\label{adm_intersect_delta01_number}
We have 
\begin{equation*}
\pi_{2/1,d}\cdot\delta_{01}=\left[\left(\frac{1}{12}-\frac{d}{2}\right)\sigma_1(d)+\frac{5}{12}\sigma_{3}(d)\right]p.
\end{equation*}
in $A^{*}(\overline{\cM_2})$.
\end{cor}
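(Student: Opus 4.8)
The plan is to use the three preceding lemmas to turn the intersection number $\pi_{2/1,d}\cdot\delta_{01}$ into a count of admissible covers, and then to evaluate that count by a classical convolution identity for the divisor function $\sigma_1$. First I would reduce to $[t_E]$. By the classification of \S\ref{admissible_classification}, the only covers $f:X\to Y$ whose contracted source $X^c$ has the shape $t_E([E'])$ (two smooth genus-$1$ components glued at a node) are of type 2C, and there are only finitely many of these; hence the intersection of $t_E$ and $\pi_{2/1,d}$ is proper, of dimension $1+2-3=0$. Since $\Adm_{2/1,d}$ is Cohen–Macaulay and $\overline{\cM_2}$ is smooth, the product is computed by the fibre product $\overline{\cM_{1,1}}\times_{\overline{\cM_2}}\Adm_{2/1,d}$, which by Lemma \ref{groupoid_int_with_delta01} has trivial automorphisms (so is an honest $0$-dimensional scheme) and by Lemma \ref{adm_delta01_mult2} has local ring $\bC[t]/(t^2)$, of length $2$, at each of its points. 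Writing $N$ for the number of intersection points, this gives $\pi_{2/1,d}\cdot[t_E]=2N\cdot p$, and combining with Lemma \ref{compare_delta01_to_map_from_M11} (that $[t_E]=2\delta_{01}$),
\[
\pi_{2/1,d}\cdot\delta_{01}=\tfrac12\,\pi_{2/1,d}\cdot[t_E]=N\cdot p,
\]
so everything comes down to computing $N$.

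To count $N$, I would use the explicit description of type 2C covers. There $Y=Y_0\cup Y_1$ with $Y_1$ smooth of genus $1$, and $X$ has two smooth genus-$1$ components $X_1,X_1'$ mapping to $Y_1$ by connected \'etale covers (hence, after marking the fibre over the node $y$, isogenies) of degrees $d_1,d_2$ with $d_1+d_2=d$, joined by a rational bridge over $Y_0$; the contracted source is $X_1\cup X_1'$. Requiring $X_1\cup X_1'\cong E\cup E'$ forces one component, say $X_1$, to be isomorphic to $E$. Thus an intersection point is determined by an ordered splitting $d=d_1+d_2$ with $d_1,d_2\ge1$, an isogeny $E\to Y_1$ of degree $d_1$ (which also pins down $Y_1$), and an isogeny $E'\to Y_1$ of degree $d_2$ (which pins down $E'$); the bridge and the remaining rational tails are then forced, and by Lemma \ref{groupoid_int_with_delta01} the data has no nontrivial automorphisms. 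By Lemma \ref{count_isogenies} there are $\sigma_1(d_1)$ choices for the first isogeny and $\sigma_1(d_2)$ for the second, whence
\[
N=\sum_{\substack{d_1+d_2=d\\ d_1,d_2\ge1}}\sigma_1(d_1)\,\sigma_1(d_2).
\]

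Finally I would evaluate this convolution by the classical identity
\[
\sum_{d_1+d_2=d}\sigma_1(d_1)\sigma_1(d_2)=\frac{5}{12}\sigma_3(d)+\left(\frac{1}{12}-\frac{d}{2}\right)\sigma_1(d),
\]
which follows from comparing $q$-expansions of the weight-$4$ Eisenstein series with the square of the quasimodular weight-$2$ series $E_2$. Substituting yields the stated value of $N$, hence of $\pi_{2/1,d}\cdot\delta_{01}$. The delicate point is the enumeration in the middle step: one must check that summing $\sigma_1(d_1)\sigma_1(d_2)$ over \emph{ordered} splittings is exactly right, i.e. that the choices of bridge-attaching points and the labeling of the two genus-$1$ components neither create nor destroy configurations. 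The potential overcounting when $X_1\cong X_1'$ is precisely what is neutralized by the automorphism analysis of Lemma \ref{groupoid_int_with_delta01}, and the multiplicity computation of Lemma \ref{adm_delta01_mult2} is what justifies the factor of $2$; these two inputs are what make the bookkeeping rigorous, so the main work is assembling them correctly rather than any single hard estimate.
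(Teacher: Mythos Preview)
Your proof is correct and follows essentially the same route as the paper: reduce from $\delta_{01}$ to $[t_E]$ via Lemma~\ref{compare_delta01_to_map_from_M11}, use Lemmas~\ref{groupoid_int_with_delta01} and~\ref{adm_delta01_mult2} to identify the intersection with a length-$2$ point for each type~2C cover, enumerate these as ordered pairs of isogenies of degrees summing to $d$, and finish with Ramanujan's convolution identity for $\sigma_1$. The only point the paper spells out that you leave implicit is why the two labelings of the branch points $y_1,y_2$ on $Y_0$ give isomorphic covers (so no extra factor of $2$); this is worth a sentence in your bookkeeping paragraph.
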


\begin{proof}
By Lemma \ref{compare_delta01_to_map_from_M11}, the intersection in question is half of $[\pi_{2/1,d}]\cdot [t_E]$. By Lemmas \ref{groupoid_int_with_delta01} and \ref{adm_delta01_mult2}, $[\pi_{2/1,d}]\cdot [t_E]$ consists of a number of degree 2 points equal to the size of $(\overline{\cM_{1,1}}\times_{\overline{\cM_2}}\Adm_{2/1,d})(\Spec(\bC))$. By the description given in Lemma \ref{groupoid_int_with_delta01}, it suffices to enumerate the number of (isomorphism classes of)  \textit{ordered} pairs of isogenies $E\to E''$ and $E'\to E''$ whose degrees sum to $d$. (Note that in this case swapping the labels on the two marked branch points results in an isomorphic cover.) By Lemma \ref{count_isogenies}, this number is $\sum_{i=1}^{d-1}\sigma_1(i)\sigma_1(d-i)$. Finally, by \cite[Table IV, 1]{ramanujan}, we have
\begin{equation*}
\sum_{i=1}^{d-1}\sigma_1(i)\sigma_1(d-i)=\left(\frac{1}{12}-\frac{d}{2}\right)\sigma_1(d)+\frac{5}{12}\sigma_{3}(d),
\end{equation*}
so we are done.
\end{proof}

\section{Intersection with the second test family}\label{eta_E_intersection}

Throughout this section, fix a general elliptic curve $(E,0)$. 

\subsection{The family $\eta_E$}

Consider the projection to the first factor $p:E\times E\to E$, and let $b:Y\to E\times E$ be the blowup of $E\times E$ at the origin. Let $f:Y\to E$ be the composition $p\circ b$. Let $A$ and $B$ be the proper transforms of $E\times0$ and $\Delta$, respectively, so that $A$ and $B$ are disjoint in $Y$. Then, let $X$ be the irreducible surface obtained from $Y$ by gluing $A$ to $B$ along the fibers of $f$, so that we have a factorization $f:Y\to X\to E$. The first map $\nu:Y\to X$ normalizes the general fiber of $g:X\to E$. The fiber $g^{-1}(x)$ is the irreducible nodal curve formed by gluing $0$ to $x$ in $E$ if $x\neq0$, and $g^{-1}(0)$ is the genus 2 curve formed by attaching a nodal cubic to $E$ at $0$.

The morphism $g:X\to E$ is a family of genus 2 curves, and we denote by $\eta_E:E\to\overline{\cM_2}$ the corresponding morphism. We compute the corresponding bivariant class $[\eta_E]\in A^2(\overline{\mathcal{M}_2})$ by computing its intersection with the divisor classes $\delta_{1}$ and $\lambda_1$.

\begin{lem}\label{hodge_degree0}
We have $c_1(\Lambda^2g_{*}\omega_{X/E})=0$.
\end{lem}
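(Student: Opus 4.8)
The plan is to pass to the normalization $\nu\colon Y\to X$ and reduce the computation of $c_1(\det g_{*}\omega_{X/E})$ to a Grothendieck--Riemann--Roch computation on the \emph{smooth} surface $Y=\Bl_{(0,0)}(E\times E)$. Recall that $A,B\subset Y$ are the two disjoint sections of $f$ that are glued to form $X$, and that the double curve $D=\nu(A)=\nu(B)$ is a section of $g$ (the node of each fibre). Since $\nu^{-1}(D)=A\sqcup B$ and the fibres of $g$ are nodal along $D$, the standard formula for the relative dualizing sheaf under normalization gives $\nu^{*}\omega_{X/E}\cong\omega_{Y/E}(A+B)$. Write $\cL:=\omega_{Y/E}(A+B)$.

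First I would establish the sheaf isomorphism $g_{*}\omega_{X/E}\cong f_{*}\cL$, so that $\det g_{*}\omega_{X/E}=\det f_{*}\cL$ with no correction term. Twisting the conductor sequence $0\to\cO_X\to\nu_{*}\cO_Y\to\cO_D\to0$ by the invertible sheaf $\omega_{X/E}$ gives $0\to\omega_{X/E}\to\nu_{*}\cL\to Q\to0$ with $Q:=\omega_{X/E}|_{D}$ a line bundle on $D\cong E$. Applying $Rg_{*}$ and using $g\circ\nu=f$ together with $R^{1}g_{*}Q=0$ (as $D$ is finite over $E$) yields
\[
0\to g_{*}\omega_{X/E}\to f_{*}\cL\xrightarrow{\ \alpha\ }g_{*}Q\xrightarrow{\ \partial\ }R^{1}g_{*}\omega_{X/E}\to R^{1}f_{*}\cL\to0 .
\]
A fibrewise Riemann--Roch check shows $\cL$ restricts to a degree-$2$ line bundle with $h^{1}=0$ on every fibre of $f$ (over $0$ the genus $1$ component carries its single section and the rational component an $\cO(1)$), so $R^{1}f_{*}\cL=0$ and $\partial$ is surjective. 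Relative duality for the Gorenstein morphism $g$ gives $R^{1}g_{*}\omega_{X/E}\cong(g_{*}\cO_X)^{\vee}=\cO_E$, so $\partial$ is a surjection of line bundles on the integral curve $E$, hence an isomorphism. Therefore $\alpha=0$ and $g_{*}\omega_{X/E}=\ker\alpha=f_{*}\cL$.

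It then remains to show $c_{1}(f_{*}\cL)=0$. Since $R^{1}f_{*}\cL=0$, Grothendieck--Riemann--Roch for the flat proper lci morphism $f\colon Y\to E$ gives $c_{1}(f_{*}\cL)=f_{*}\big[\ch(\cL)\,\td(T_Y)\big]_{2}$, where I have used $\td(f^{*}T_E)=1$ because $E$ is elliptic. Expanding,
\[
c_{1}(f_{*}\cL)=\tfrac12\,c_{1}(\cL)^{2}-\tfrac12\,c_{1}(\cL)\cdot K_{Y}+\tfrac1{12}\big(K_{Y}^{2}+e(Y)\big),
\]
with $K_{Y}=\cE$ since $K_{E\times E}=0$. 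Writing $A=b^{*}(E\times 0)-\cE$ and $B=b^{*}\Delta-\cE$ and using $(E\times0)\cdot\Delta=1$, $(E\times0)^{2}=\Delta^{2}=0$ on $E\times E$, one finds $c_{1}(\cL)^{2}=1$ and $c_{1}(\cL)\cdot K_{Y}=1$, so the first two terms cancel. For the last term, Noether's formula and $\chi(\cO_{E\times E})=\chi(\cO_E)^{2}=0$ give $K_{Y}^{2}+e(Y)=12\,\chi(\cO_Y)=0$. Hence $c_{1}(f_{*}\cL)=0$.

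The delicate point is precisely this last term, and it is where I expect the main subtlety to lie. The morphism $f$ is \emph{not} smooth: its fibre over $0$ is the nodal curve created by the blow-up, so the relative Todd class must be taken as $\td(T_Y)/f^{*}\td(T_E)$ rather than as the Todd class of a relative-tangent line bundle. Omitting the $\tfrac1{12}(K_{Y}^{2}+e(Y))$ contribution would produce the spurious non-integral value $-\tfrac1{12}$; it is exactly this Euler-characteristic term, vanishing by Noether's formula, that restores the correct answer $0$. Conceptually the result reflects that the Hodge bundle of the family splits as the (trivial) Hodge bundle of the constant genus $1$ normalization $E$ together with the residual node-differential line, which is canonically trivialized by its residue, so that both summands, and hence the determinant, have degree $0$.
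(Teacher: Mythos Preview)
Your proof is correct and follows essentially the same strategy as the paper: pass to the normalization $Y$, identify $g_{*}\omega_{X/E}$ with $f_{*}\omega_{Y/E}(A+B)$, and apply Grothendieck--Riemann--Roch on the smooth surface $Y$ using that $\td(T_E)=1$.

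The only differences are tactical. For the identification $g_{*}\omega_{X/E}\cong f_{*}\cL$, the paper observes directly that the natural injection $\omega_{X/E}\hookrightarrow\nu_{*}\cL$ pushes forward to a map of rank-$2$ bundles which is an isomorphism on every fibre, bypassing your conductor-sequence and duality argument. For the GRR step, the paper computes $\td(T_Y)$ explicitly via Fulton's blow-up formula for Chern classes, obtaining $c(T_Y)=1-K+x$ and hence $\td_2(T_Y)=\tfrac{1}{12}(K^{2}+x)=0$; this is equivalent to your Noether-formula argument $K_Y^{2}+e(Y)=12\chi(\cO_Y)=0$, but packaged differently. Your intersection numbers $c_1(\cL)^2=1$ and $c_1(\cL)\cdot K_Y=1$ agree with the paper's term-by-term expansion. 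The conceptual remark at the end about why the answer should be zero is a nice addition not present in the paper.
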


\begin{proof}

As $g_{*}\omega_{X/E}$ is a rank 2 vector bundle on the curve $E$, we have
\begin{equation*}
c_1(\Lambda^2g_{*}\omega_{X/E})=c_1(g_{*}\omega_{X/E}).
\end{equation*}

We also have an injection $\omega_{X/E}\to\nu_{*}\omega_{Y/E}(A+B)$. After pushing forward to $E$, we obtain an injection of rank 2 vector bundles $g_{*}\omega_{X/E}\to f_{*}\omega_{Y/E}(A+B)$, which is an isomorphism on each fiber. It follows that $g_{*}\omega_{X/E}\cong f_{*}\omega_{Y/E}(A+B)$.

Thus, it is left to compute $c_1(f_{*}\omega_{Y/E}(A+B))$. First, note that 
\begin{equation*}
\omega_{Y/E}\cong f^{!}\mathcal{O}_E\cong f^{!}\omega_{E}\cong\omega_{Y}.
\end{equation*}
By Grothendieck-Riemann-Roch,
\begin{equation}\label{grr}
\ch(f_{*}\omega_Y(A+B))\td(T_E)=f_{*}(\ch(\omega_Y(A+B))\cdot\td(T_Y)).
\end{equation}

Because $E$ is an elliptic curve, $\td(T_E)=1$. Moreover, $\omega_Y(A+B)$ has vanishing $H^1$ on every fiber of $f$, so all higher pushforwards of $\omega_Y(A+B)$ vanish. Hence the class $f_{*}\omega_Y(A+B)\in K^0(E)$ is simply the sheaf $f_{*}\omega_Y(A+B)$, and the left hand side of (\ref{grr}) is
\begin{equation*}
\ch(f_{*}\omega_Y(A+B))=2+c_1(f_{*}\omega_Y(A+B)).
\end{equation*}

We now compute the right hand side of (\ref{grr}). Let $K=\omega_Y$ be the class of the canonical bundle in $A^1(Y)$. Because the canonical bundle on $E\times E$ is trivial, $K$ is exactly the class of the exceptional divisor of the blowup $b$. Thus, 
\begin{align*}
\ch(\omega_Y(A+B))&=\ch(K+A+B)\\
&=1+(K+A+B)+\frac{1}{2}(K^2+A^2+B^2+2K\cdot A+2K\cdot B+2A\cdot B)\\
&=1+(K+A+B)+\frac{1}{2}(-x-x-x+2x+2x+0)\\
&=1+(K+A+B)+\frac{1}{2}x,
\end{align*}
where $x$ denotes the class of a point on the exceptional divisor of $b$.

Next, to compute $\td(T_Y)$, we apply \cite[Theorem 15.4]{fulton}. Letting $j:\mathbb{P}^1\to Y$ be the inclusion of the exceptional divisor on $Y$, we get:

\begin{equation*}
c(T_Y)=b^{*}c(T_{E\times E})+j_{*}(\alpha),
\end{equation*}
where $\alpha=\frac{1}{x}(1-(1+x)(1-x)^2)=\frac{1}{x}(-x+x^2+x^3)=-1+x$, where $x$ is as above. Because $T_{E\times E}$ is trivial, we are left with

\begin{equation*}
c(T_Y)=1-K+x,
\end{equation*}
from which it follows that

\begin{equation*}
\td(T_Y)=1+\frac{1}{2}c_1+\frac{1}{12}(c_1^2+c_2)=1-\frac{1}{2}K.
\end{equation*}

Putting everything together, we find

\begin{align*}
c_1(f_{*}\omega_Y(A+B))&=f_{*}(\ch(\omega_Y(A+B))\cdot\td(T_Y))-2\\
&=f_{*}\left(\left(1+(K+A+B)+\frac{1}{2}x\right)\left(1-\frac{1}{2}K\right)\right)-2.\\
&=f_{*}\left(A+B+\frac{1}{2}x-\frac{1}{2}K(K+A+B)\right)-2\\
&=f_{*}(A+B)-2\\
&=0.
\end{align*}
\end{proof}

\begin{lem}\label{etaE_and_delta1_transverse}
The intersection of the morphisms $\eta_E:E\to\overline{\mathcal{M}_{2}}$ and $b_{1}:\overline{\cM_{1,1}}\times\overline{\cM_{1,1}}\to \overline{\mathcal{M}_{2}}$ is the disjoint union of two reduced points of degree 1.
\end{lem}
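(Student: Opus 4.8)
The plan is to pin down the intersection set-theoretically, then verify transversality by a local computation imported from the blowup construction, and finally check that each point is reduced of degree $1$ via an automorphism count.

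First I would identify which fibers of $\eta_E$ lie on $\Delta_1$. By the description of $g:X\to E$, the fiber $g^{-1}(x)$ is an irreducible nodal curve for $x\neq 0$, hence lies in $\Delta_0\setminus\Delta_1$, while $g^{-1}(0)=E\cup_p R$ is the union of $E$ and a nodal cubic $R$ meeting at the attaching point $p$ (corresponding to $0\in E$), hence lies in $\Delta_1$ (indeed in $\Delta_{01}$). Thus $\eta_E$ meets $\Delta_1$ set-theoretically only over $x=0$. Since $b_1:\overline{\cM_{1,1}}\times\overline{\cM_{1,1}}\to\overline{\cM_2}$ records the two genus $1$ components as an \emph{ordered} pair, and $E\not\cong R$ (one is smooth, the other singular), the curve $g^{-1}(0)$ has exactly two preimages under $b_1$, namely $(E,R)$ and $(R,E)$. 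Hence the fiber product $E\times_{\overline{\cM_2}}(\overline{\cM_{1,1}}\times\overline{\cM_{1,1}})$ is supported on two distinct points.

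Next I would establish transversality, which is the crux. Working in the versal deformation space of $C:=g^{-1}(0)$, which is smooth of dimension $3$, the divisor $\Delta_1$ is cut out locally by the smoothing parameter $s_p$ of the separating node $p$, since $C$ stays reducible exactly when $p$ is left unsmoothed. The point is then to compute how $s_p$ varies along $\eta_E$. Because the gluing of $A$ to $B$ that produces $X$ takes place away from $p$ — the proper transforms of $\{0\}\times E$, of $E\times 0$, and of $\Delta$ meet the exceptional $\bP^1$ in three distinct tangent directions, so $p$ is disjoint from $A\cup B$ — a neighborhood of $p$ in $X$ agrees with the corresponding neighborhood of the blowup $Y$, and $g=f=p\circ b$ there. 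In suitable local coordinates $u,v$ on $Y$ at $p$, the map $f$ is the standard node smoothing $x=uv$, so $s_p$ pulls back along $\eta_E$ to the base coordinate $x$, that is, to a uniformizer of $E$ at $0$. Consequently $\eta_E$ crosses $\Delta_1=\{s_p=0\}$ transversally and the fiber product is reduced of dimension $0$ at each of the two points.

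Finally I would confirm each point has degree $1$. An automorphism of an object of the fiber product over $x=0$ is a pair $(\beta_E,\beta_R)$ of automorphisms of the two marked genus $1$ curves which, after gluing at $p$, induces the identity on $C$. Since $E$ and $R$ are non-isomorphic and lie on distinct components, they cannot be interchanged and cannot compensate one another, so this forces $\beta_E=\mathrm{id}$ and $\beta_R=\mathrm{id}$; in particular the inversion $-1$ of $(E,p)$ does not lift to the fiber product. Hence each of the two points has trivial automorphism group and residue field $\bC$, so the intersection is a disjoint union of two reduced points of degree $1$.

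The main obstacle is precisely the transversality step: one must correctly identify $\Delta_1$ with the vanishing locus of the separating-node smoothing parameter, and then extract the local model $x=uv$ near $p$ from the blowup so as to see that $s_p$ pulls back to a uniformizer rather than to a higher power of $x$; everything else is bookkeeping about which fibers are reducible and which automorphisms survive.
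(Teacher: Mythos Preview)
Your proof is correct and follows essentially the same approach as the paper: identify the unique reducible fiber $g^{-1}(0)$, note the two orderings of its components give the two set-theoretic points with trivial automorphism groups, and establish transversality by showing the separating node is smoothed to first order along $\eta_E$. The only cosmetic difference is that you extract this last fact from an explicit local model $x=uv$ coming from the blowup, whereas the paper simply observes that the total space $X$ is smooth at the separating node; these are the same argument.
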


\begin{proof}
The only reducible fiber of $g:X\to E$ is $g^{-1}(0)$. Thus, the objects of the groupoid $E\times_{\overline{\mathcal{M}_2}}(\overline{\cM_{1,1}}\times\overline{\cM_{1,1}})(\Spec(\bC))$ are the triples consisting of the point $0\in E$, a pair of elliptic curves $([E_1],[E_2])\in\overline{\cM_{1,1}}\times\overline{\cM_{1,1}}$, and an isomorphism $E_{1}\cup_{0}E_2\cong g^{-1}(0)$. Up to isomorphism, there are only two such objects, which correspond to the choices of ordering of the two components of $g^{-1}(0)$, and it is clear that these objects have no non-trivial automorphisms.

Thus, it suffices to show that $\eta_E$ and $b_1$ intersect transversely, i.e., no non-trivial tangent vectors $v$ to $[g^{-1}(0)]\in\overline{\cM_2}$ factor through both $b_1$ and $\eta_E$. If $v$ factors through $\eta_E$, then the corresponding first-order deformation of $X_0$ smooths the separating node $x\in X_0$, as $X$ is smooth at $x$. On the other hand, first-order deformations factoring through $b_1$ do not smooth the node at $x$, so we must have $v=0$. As $b_{1},\eta_E$ are unramified over $[g^{-1}(0)]$, we conclude that $\eta_E,b_1$ intersect transversely.
\end{proof}

\begin{cor}\label{class_of_etaE}
We have $[\eta_E]=8\delta_{00}$.
\end{cor}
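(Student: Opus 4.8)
The class $[\eta_E]$ lives in $A^2(\overline{\cM_2})$, which by Theorem \ref{M2_relations} is spanned by $\delta_{00}$ and $\delta_{01}$. So the plan is to write $[\eta_E]=a\,\delta_{00}+b\,\delta_{01}$ and pin down the two unknowns $a,b$ by pairing $[\eta_E]$ against the two degree-$1$ classes $\lambda_1$ and $\delta_1$. Each such product lands in $A^3(\overline{\cM_2})=\bQ\cdot p$, and the resulting numbers can be read off in two ways: abstractly, via the product relations of Theorem \ref{M2_relations}, and geometrically, via the two lemmas already established in this section. Matching the two determines $a$ and $b$. Since $\Adm_{2/1,d}$-type intersection issues do not arise here (we intersect against divisors $\lambda_1,\delta_1$ and the maps involved are generically finite onto their images with $\overline{\cM_2}$ smooth), the products are proper and may be computed as honest intersection numbers.

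First I would extract $b$ from the $\lambda_1$-pairing. The family $g:X\to E$ is the pullback of the universal family along $\eta_E$, so by compatibility of pushforward with base change $\eta_E^{*}\lambda_1=c_1(\Lambda^2 g_{*}\omega_{X/E})$, which vanishes by Lemma \ref{hodge_degree0}. By the projection formula this gives $[\eta_E]\cdot\lambda_1=0$. On the other hand, using $10\lambda_1=\delta_0+2\delta_1$ together with the products $\delta_0\cdot\delta_{00}=-\tfrac14 p$, $\delta_0\cdot\delta_{01}=\tfrac14 p$, $\delta_1\cdot\delta_{00}=\tfrac18 p$, $\delta_1\cdot\delta_{01}=-\tfrac1{48}p$, a short computation shows that the $a$-contributions cancel and one is left with
\begin{equation*}
[\eta_E]\cdot\lambda_1=\frac{b}{48}\,p.
\end{equation*}
Comparing with $[\eta_E]\cdot\lambda_1=0$ forces $b=0$.

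Next I would extract $a$ from the $\delta_1$-pairing. By Lemma \ref{etaE_and_delta1_transverse}, the intersection of $\eta_E$ with $b_1:\overline{\cM_{1,1}}\times\overline{\cM_{1,1}}\to\overline{\cM_2}$ is two reduced points of degree $1$, so $[\eta_E]\cdot[b_1]=2p$; since $b_1$ is $2$-to-$1$ onto $\Delta_1$ we have $[b_1]=2\delta_1$, whence $[\eta_E]\cdot\delta_1=p$. Abstractly, $[\eta_E]\cdot\delta_1=\bigl(\tfrac a8-\tfrac b{48}\bigr)p=\tfrac a8\,p$ once $b=0$. Equating the two gives $a=8$, and therefore $[\eta_E]=8\delta_{00}$. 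The genuinely substantive work has already been carried out in Lemmas \ref{hodge_degree0} and \ref{etaE_and_delta1_transverse}; the only points demanding care in this last step are the base-change identity $\eta_E^{*}\lambda_1=c_1(\Lambda^2 g_{*}\omega_{X/E})$ and the bookkeeping of the factor $2$ coming from $\deg(b_1)=2$, both of which are routine but essential to get the numerics right.
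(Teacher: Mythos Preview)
Your proposal is correct and follows essentially the same approach as the paper: pair $[\eta_E]$ against $\lambda_1$ and $\delta_1$, invoke Lemmas \ref{hodge_degree0} and \ref{etaE_and_delta1_transverse} for the geometric values, and use Theorem \ref{M2_relations} to solve the resulting linear system. The paper's proof is terse and leaves the linear algebra implicit, whereas you spell out the pairings $\lambda_1\cdot\delta_{00}=0$, $\lambda_1\cdot\delta_{01}=\tfrac1{48}p$, etc.; but the underlying argument is the same.
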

\begin{proof}
By Lemma \ref{hodge_degree0}, we have $[\eta_E]\cdot\lambda_1=0$. By Lemma \ref{etaE_and_delta1_transverse}, we have $[\eta_E]\cdot\delta_1=p$, as $b_1$ has degree 2. The result follows from Theorem \ref{M2_relations}.
\end{proof}

\subsection{Intersection of $\pi_{2/1,d}$ and $\eta_E$}

By \S \ref{admissible_classification}, the intersection of $\pi_{2/1.d}$ with $\eta_E$ consists of covers of types 2B, 3C, and 4C. However, if $E$ is general, the source of a cover of type 4C cannot contract to a member of $\eta_E$. Thus, we only consider covers of types 2B and 3C. The intersection in question is the stack $E\times_{\overline{\cM_2}}\Adm_{2/1,d}$, whose $\Spec(\bC)$-points consist of the data of a point $0\neq x$, an admissible cover $f:X\to Y$ of type 2B or 3C, and an isomorphism $h:X^c\cong E/(0\sim x)$.

\subsubsection{Covers of type 2B}

We consider the contribution to $E\times_{\overline{\cM_2}}\Adm_{2/1,d}$ coming from covers of type 2B. We have $f:X\to Y$, $Y=Y_0\cup Y_1$ where $Y_i$ has genus $i$, $y=Y_0\cap Y_i$, and both marked points $y_1,y_2$ lie on $Y_0$. $X$ contains a component $X_1$ of genus 1 mapping to $Y_1$, and $d-1$ rational components, all of which map isomorphically to $Y_0$ except for one that connects two points of $X_1$ lying over $y$ and has degree 2 over $Y_0$. Then, $X^c$ is the irreducible curve obtained by gluing these two points together on $X_1$.

\begin{lem}\label{adm_eta_2b_set}
The points of $(E\times_{\overline{\cM_2}}\Adm_{2/1,d})(\Spec(\bC))$ where the cover $f$ has type 2B have no non-trivial automorphisms.
\end{lem}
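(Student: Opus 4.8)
Let me look at what structure these type 2B points carry. A point consists of a nonzero $x\in E$, an admissible cover $f:X\to Y$ of type 2B, and an isomorphism $h:X^c\cong E/(0\sim x)$. From the type 2B description, $X$ has a distinguished genus 1 component $X_1$ mapping isomorphically (unramified) onto $Y_1$, a degree-2 rational bridge joining the two preimages of $y$ on $X_1$, and $d-2$ further rational components mapping isomorphically to $Y_0$. The contraction $X^c$ is obtained by gluing the two bridge-points of $X_1$ together.

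I want to follow the strategy already used in Lemma \ref{groupoid_int_with_delta01}: an automorphism of the object is an automorphism of $f$ (over an automorphism of $Y$) compatible with the fixed isomorphism $h$. The plan is first to argue that no nontrivial automorphism of $X_1$ is permitted. The key point is that $h$ identifies $X^c$, hence the normalization $X_1$ together with its two marked glued points, with the fixed curve $E$ and its marked pair $\{0,x\}$. Concretely, $X_1\cong E$ and the two preimages of $y$ correspond to $0$ and $x$. An automorphism of the cover must preserve this structure, so it restricts to an automorphism of $(E,\{0,x\})$, i.e.\ an automorphism of $E$ fixing the pair $\{0,x\}$ (possibly swapping them). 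Since $h$ is \emph{fixed}, the automorphism must commute with $h$, which forces the restriction to $X_1$ to be the identity.

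With $X_1$ rigidified, I would propagate rigidity to the rest of $X$ and to $Y$. Because $f|_{X_1}:X_1\to Y_1$ is an isomorphism (it is unramified of degree equal to the genus-1 piece, and in fact degree $1$ onto $Y_1$ — or more carefully, since the automorphism fixes $X_1$ pointwise it fixes $Y_1$ pointwise), the induced automorphism of $Y_1$ is trivial, and hence so is the smooth point $y=Y_0\cap Y_1$. Then the induced automorphism of the target must fix $(Y_0,y,y_1,y_2)$, where $(Y_0,y,y_1,y_2)$ is a stable $4$-pointed rational curve and hence rigid. Finally, any automorphism of $f$ fixing the target and acting trivially on $X_1$ must permute the rational components of $f^{-1}(Y_0)$ lying over a common point; but each such component is attached to a specific, now-fixed point of $X_1$, and a degree-$1$ component mapping isomorphically to $Y_0$ and pinned at its attaching node has no nontrivial automorphism over $Y_0$, while the degree-$2$ bridge is likewise pinned at its two (distinct, now-fixed) attaching nodes and so admits no nontrivial automorphism over $Y_0$ either.

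The main obstacle I anticipate is the careful treatment of the rational components of $f^{-1}(Y_0)$: one must rule out automorphisms that either swap two isomorphic components or act nontrivially on the degree-$2$ bridge. The bridge is the delicate case, since a degree-$2$ cover of $\bP^1$ does carry the hyperelliptic involution; I expect to resolve this by observing that the two nodes where the bridge meets $X_1$ map to the \emph{same} point $y$ but are attached at the two \emph{distinct} points of $X_1$ already fixed above, so the deck involution of the bridge (which swaps its two preimages of $y$) is incompatible with fixing those two attaching points. This pins the bridge and completes the argument that the object has no nontrivial automorphisms.
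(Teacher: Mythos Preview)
Your argument is essentially correct and spells out in detail what the paper records simply as ``Clear.'' One factual slip: in type 2B the map $f|_{X_1}:X_1\to Y_1$ is an unramified cover of degree $d$, not an isomorphism (there are $d$ preimages of $y$ on $X_1$, two of which are joined by the bridge and $d-2$ of which carry the degree-$1$ tails). You effectively self-correct this when you write ``or more carefully, since the automorphism fixes $X_1$ pointwise it fixes $Y_1$ pointwise,'' which is the right reasoning and does not require $f|_{X_1}$ to be an isomorphism. With that correction, your chain of implications---identity on $X^c$ forces identity on its normalization $X_1$, hence on $Y_1$, hence on $(Y_0,y,y_1,y_2)$, hence on each rational component pinned at its attaching node(s)---goes through, and your treatment of the deck involution on the bridge is exactly the point that needs checking.
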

\begin{proof}
Clear.
\end{proof}

\begin{lem}\label{adm_eta_2b_mult2}
At every point of intersection of $\pi_{2/1,d}$ and $\eta_E$ where the cover $f$ has type 2B, the intersection multipliticity is 2.
\end{lem}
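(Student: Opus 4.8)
The plan is to reproduce, in the type 2B geometry, the local deformation computation of Lemma~\ref{adm_delta01_mult2}. Fix such a point of intersection: a cover $f:X\to Y$ of type 2B, a point $0\neq x\in E$, and an isomorphism $X^c\cong E/(0\sim x)$, where $X^c=X_1/(p\sim q)$ is the contraction of $X$ and $p,q$ are the two points of $X_1\cong E$ joined by the degree $2$ rational bridge $X'$, so that $p-q$ is the relevant $d$-torsion point. Since $f$ is unramified over the node $y=Y_0\cap Y_1$ (every node of $X$ over $y$, in particular the two attaching nodes of $X'$, has ramification index $1$), Proposition~\ref{adm_defos} shows $\Adm_{2/1,d}$ is smooth of dimension $2$ at $[f]$, with deformation theory identified through $\psi_{2/1,d}$ with that of $(Y,y_1,y_2)\in\overline{\cM_{1,2}}$. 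I would choose local coordinates $s,t$ at $[f]$ so that $s$ deforms the modulus of $Y_1$ (hence of $X_1$) and $t$ smooths the node $y$.

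Next I would fix coordinates $u,v,w$ on the complete local ring $\bC[[u,v,w]]$ of $\overline{\cM_2}$ at $[X^c]$: here $u$ records the modulus of the normalization $E=X_1$, $v$ records the position of the second glued point relative to the first (the two $\overline{\cM_{1,2}}$-directions of the pointed normalization $(E,0,x)$), and $w$ smooths the node of $X^c$. In these coordinates $\eta_E$ holds $E$, its modulus, and the node fixed while moving only the glued point, so $\eta_E^{*}$ sends $u,w\mapsto 0$ and $v\mapsto e$, where $e$ is a uniformizer of $E$ at $x$.

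The heart of the argument is the description of $\pi_{2/1,d}^{*}\colon\bC[[u,v,w]]\to\bC[[s,t]]$. Because the pointed normalization $(X_1,p,q)$ depends only on the cover $X_1\to Y_1$ and is unaffected by smoothing $y$, the images $\pi^{*}(u),\pi^{*}(v)$ are power series in $s$ alone, with $\pi^{*}(u)=s$ (a submersion onto the modulus direction) and $\pi^{*}(v)$ a power series vanishing at $s=0$. The direction $t$, on the other hand, smooths $y$ and hence all nodes of $X$ over $y$; after contracting the degree $2$ bridge $X'$ this introduces a rational double point in the total space of the induced family of $X^c$, exactly as in Lemma~\ref{adm_delta01_mult2}, so the node of $X^c$ is smoothed only to order $2$ and $\pi^{*}(w)=t^{2}\cdot(\mathrm{unit})$.

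Finally, since $\Adm_{2/1,d}$ is smooth and $\overline{\cM_2}$ is smooth, the intersection multiplicity equals the length of the fiber-product ring $\bC[[s,t]]\otimes_{\bC[[u,v,w]]}\bC[[e]]$, which I would present as $\bC[[s,t,e]]$ modulo the relations $\pi^{*}(u)=0$, $\pi^{*}(v)=e$, $\pi^{*}(w)=0$. Using $\pi^{*}(u)=s$ to set $s=0$, then $\pi^{*}(v)=e$ to set $e=0$, and finally $\pi^{*}(w)=t^{2}\cdot(\mathrm{unit})$ to impose $t^{2}=0$, the ring collapses to $\bC[t]/(t^{2})$, of length $2$; note the outcome is insensitive to the precise form of $\pi^{*}(v)$. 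The one genuinely non-formal input, and thus the main obstacle, is the claim $\pi^{*}(w)=t^{2}\cdot(\mathrm{unit})$, i.e.\ the order $2$ smoothing of the node. This rests on the explicit contraction of the degree $2$ bridge $X'$---a degree $2$ cover of $Y_0$ attached along two unramified nodes and carrying the two branch points $y_1,y_2$---producing an $A_1$-singularity; this local picture coincides with the one in Lemma~\ref{adm_delta01_mult2}, the only difference being that $X'$ here joins two points of a single elliptic component rather than two distinct ones, so that computation transfers without change.
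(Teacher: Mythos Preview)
Your proposal is correct and follows essentially the same approach as the paper's proof, which is itself a terse sketch referring back to Lemma~\ref{adm_delta01_mult2}: identify the two deformation directions of $(Y,y_1,y_2)$, observe that varying $Y_1$ varies $X_1$ nontrivially while smoothing $y$ smooths the node of $X^c$ only to order $2$ via the $A_1$ contraction of the degree $2$ bridge, and note that the $\eta_E$-direction (moving the glued points apart) is transverse to both. Your explicit computation of the fiber-product ring is a faithful expansion of this sketch; the minor overclaims that $\pi^{*}(u)=s$ exactly and that $\pi^{*}(v)$ depends on $s$ alone are harmless coordinate choices and, as you correctly note, irrelevant to the length calculation.
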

\begin{proof}
The proof is similar to the proof of Lemma \ref{adm_delta01_mult2}. The deformation directions for the cover $X\to Y$ vary the elliptic component of $Y$ or smooth the node. These induce deformations of $X^c$ which vary the elliptic curve $X_1$ (non-trivially, to first order) and smooth the node of $X^c$ to order 2, respectively. The tangent direction coming from $E$ moves apart the glued points of $X$, which is orthogonal to the other two directions. We conclude in a similar way to Lemma \ref{adm_delta01_mult2} that the intersection multiplicities are all 2.
\end{proof}

\begin{cor}\label{adm_eta_2b_total}
The contribution of $[\pi_{2/1,d}]\cdot [\eta_E]$ from covers of type 2B is $2\sigma(d)(d-1)p$.
\end{cor}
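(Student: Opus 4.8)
The plan is to reduce the computation to a count of isogenies. By Lemma \ref{adm_eta_2b_set} every type-2B object of $(E\times_{\overline{\cM_2}}\Adm_{2/1,d})(\Spec(\bC))$ is a genuine point with no nontrivial automorphisms, and by Lemma \ref{adm_eta_2b_mult2} each such point contributes with intersection multiplicity $2$. Writing $N$ for the number of isomorphism classes of such points, the contribution to $[\pi_{2/1,d}]\cdot[\eta_E]$ coming from type 2B is therefore $2Np$, so it remains to show that $N=\sigma_1(d)(d-1)$.

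To compute $N$, I would set up a bijection between type-2B points and pairs $(x,\phi)$, where $\phi\colon E\to Y_1$ is a degree $d$ isogeny (taken up to isomorphism of the target $Y_1$) and $x\in\ker(\phi)-\{0\}$. Given a type-2B object $(x,[f],h)$, note first that the point $x\in E$ is rigidly recorded by the projection to the (schematic) factor $E$, so isomorphisms of objects fix $x$. The isomorphism $h\colon X^c\cong E/(0\sim x)$ identifies the normalization $X_1$ of $X^c$ with $E$ and carries the two glued points to $0$ and $x$. Since $f^{-1}(Y_1)=X_1$ and $f$ has degree $d$, the restriction $f|_{X_1}\colon E\to Y_1$ is an unramified degree $d$ cover, hence an isogeny $\phi$ once we take $y=f(0)$ as the origin of $Y_1$; the two glued points both lie over $y$, which forces $x\in\ker(\phi)-\{0\}$.

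Conversely, I would check that a pair $(x,\phi)$ reconstructs a unique cover up to isomorphism. The target component $Y_0$ is a $\bP^1$ carrying the node $y$ and the two branch points $y_1,y_2$, so it is rigid; the degree $2$ rational bridge over $Y_0$ joining $0$ and $x$ and branched over $y_1,y_2$ is uniquely determined, as are the $d-2$ components mapping isomorphically to $Y_0$. Any isomorphism of such covers compatible with $h$ restricts to the identity on $X_1=E$, hence fixes $\ker(\phi)$, so distinct pairs $(x,\phi)$ yield non-isomorphic points and there is no overcounting. Counting the pairs: by Lemma \ref{count_isogenies} there are $\sigma_1(d)$ isogenies $E\to Y_1$ of degree $d$ up to isomorphism of the target, and each has a kernel of order $d$, hence $d-1$ nonzero elements $x$. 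Thus $N=\sigma_1(d)(d-1)$, giving the contribution $2\sigma_1(d)(d-1)p=2\sigma(d)(d-1)p$.

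The main obstacle is the rigidity bookkeeping in the previous paragraph: one must verify that, after fixing $(x,\phi)$, every remaining piece of the cover (the target $Y_0$ with its marked points and node, the degree $2$ bridge, and the trivial tails) is uniquely pinned down, and that the only isomorphisms compatible with $h$ are trivial. This uses the explicit type-2B description from \S\ref{admissible_classification} together with the uniqueness of a degree $2$ cover of $\bP^1$ branched at two prescribed points with two prescribed points over a third. Genericity of $E$ ensures the branch points are distinct and that $f$ is an honest admissible cover, so no degenerate coincidences spoil the count.
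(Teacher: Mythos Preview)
Your proposal is correct and essentially identical to the paper's proof: the paper likewise reduces via Lemmas \ref{adm_eta_2b_set} and \ref{adm_eta_2b_mult2} to counting type-2B points, then sets up a bijection with pairs $(G,g)$ where $G\subset E[d]$ is a subgroup of order $d$ and $0\neq g\in G$, which is the same as your pairs $(x,\phi)$ with $G=\ker\phi$ and $g=x$. Your treatment of the rigidity bookkeeping is more explicit than the paper's, but the argument is the same.
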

\begin{proof}
By Lemmas \ref{adm_eta_2b_set} and \ref{adm_eta_2b_mult2}, it suffices to show that the subset of $(E\times_{\overline{\cM_2}}\Adm_{2/1,d})(\Spec(\bC))$ corresponding to covers of type 2C has cardinality $\sigma(d)(d-1)$. 

To do this, we may give a bijection between this subset and the set of pairs $(G,g)$ where $G\subset E[d]$ is a subgroup of order $d$ and $0\neq g\in G$. The claim then follows from Lemma \ref{count_isogenies}. In one direction, given a triple $(x,f,h)$, let $\widetilde{h}:X_1\to E$ be the induced isomorphism after normalizing both sides. Then, we send $(x,f,h)$ to $G=\mathrm{ker}(f|_{X_1}\circ\widetilde{h}^{-1})$, where the origin of $Y_1$ is taken to be the node $y$, and $g=\widetilde{h}(x)$. In the other, let $f$ be the map obtained by gluing rational curves in the obvious way to both sides of $E\to E/G$, set $x=g$, and let $h$ be induced by the identity on $E$.
\end{proof}

\subsubsection{Covers of type 3C}

We now turn to admissible covers $f:X\to Y$ of type 3C. Here, $Y$ is a nodal cubic with node $y$ and marked points $y_1,y_2\in Y$, and $X$ is a union of smooth curves $X_1\cong E$ and $X_2,\ldots,X_n\cong\bP^1$, which are arranged in an $n$-gon, or, when $n=1$, $X$ is irreducible of genus 2. Each of $X_2,\ldots,X_n$ maps via $x\mapsto x^a$ to $\bP^1$, then to $Y$ via a normalization, so that $f$ is ramified over $Y$ at all of the nodes of $X$ with ramification index $a\ge2$ at every branch. Then, $f$ has degree $a$ when restricted to $X$ as well, and in addition is simply ramified at $x_1,x_2\in X_1$ over $y_1,y_2$, respectively. The total degree of $f$ is therefore $d=an$.

\begin{lem}\label{adm_eta_3c_set}
The points of $(E\times_{\overline{\cM_2}}\Adm_{2/1,d})(\Spec(\bC))$ where $f$ has type 3C have automorphism group of order $a^{n-1}$.
\end{lem}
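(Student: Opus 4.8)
The plan is to unwind the groupoid structure and reduce everything to counting fiberwise automorphisms of the rational components. An object of $(E\times_{\overline{\cM_2}}\Adm_{2/1,d})(\Spec(\bC))$ supported at a type 3C cover is a triple $(x,f,h)$ with $h:X^c\cong E/(0\sim x)$, and since the first factor $E$ is a scheme, an automorphism of such an object is precisely an automorphism $(\alpha,\beta)$ of the admissible cover $f:X\to Y$ whose induced automorphism of the contraction $X^c$ is the identity. As normalization is functorial and $X_1\to X^c$ is the normalization of $X^c$, this forces $\alpha|_{X_1}=\mathrm{id}_{X_1}$.

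First I would show $\beta=\mathrm{id}_Y$. The automorphisms of the nodal cubic $Y$ fixing the node $y$ without interchanging the two branches correspond to automorphisms of $\bP^1$ fixing the two preimages $y',y''$ of $y$; requiring these to also fix the branch points $y_1,y_2$ forces the identity. The remaining possibility, that $\beta$ interchanges the two branches at $y$, would force $\alpha$ to interchange the two branches at each node of $X$, in particular at the two nodes lying on $X_1$; this is incompatible with $\alpha|_{X_1}=\mathrm{id}_{X_1}$, which fixes $X_1$ and hence both of those branches. Thus $\beta=\mathrm{id}_Y$ and $\alpha$ is an automorphism of $X$ over $Y$.

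Next, since $\alpha$ fixes $X_1$ pointwise and $X$ is an $n$-gon, $\alpha$ fixes the two nodes on $X_1$, hence the adjacent components $X_2,X_n$, and inductively each component $X_i$ and each node. On each rational component $X_i\cong\bP^1$ with $2\le i\le n$, the map to the normalization of $Y$ is $x\mapsto x^a$, totally ramified at the two nodes; the automorphisms of $X_i$ commuting with this map and fixing the two nodes are exactly $x\mapsto\zeta x$ with $\zeta^a=1$, i.e.\ the group of $a$-th roots of unity $\mu_a$. Working locally at a node $\Spec\bC[[u,v]]/(uv)$, where $f$ is $u\mapsto u^a$ on one branch and $v\mapsto v^a$ on the other, one sees that an automorphism over $Y$ scales $u$ and $v$ by independent $a$-th roots of unity; hence the scalars on distinct components are subject to no compatibility relation, and the constraint $\alpha|_{X_1}=\mathrm{id}$ imposes nothing on its rational neighbors. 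Therefore the automorphism group is $\prod_{i=2}^{n}\mu_a\cong\mu_a^{\,n-1}$, of order $a^{n-1}$.

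The main obstacle is precisely this local analysis at the nodes: one must verify that gluing the cyclically arranged components imposes no relations among the scalars and is compatible with the admissible-cover structure (equal ramification on the two branches). The degenerate case $n=1$, where $X=X^c$ is irreducible and there are no rational components, is consistent: the constraint $\alpha|_{X_1}=\mathrm{id}$ forces $\alpha=\mathrm{id}$, matching $a^{0}=1$.
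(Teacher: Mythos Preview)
Your proof is correct and follows essentially the same approach as the paper's: reduce to automorphisms of $f$ inducing the identity on $X^c$ (hence on $X_1$), and count the $a$-th roots of unity acting on each of the $n-1$ rational components. The paper's proof is very terse and simply asserts that such automorphisms are over $Y$ and that each rational component contributes a factor of $a$; your version supplies the details the paper omits, namely the argument that $\beta=\mathrm{id}_Y$ (including the branch-swapping case), the verification that the gluing at nodes imposes no relations among the scalars, and the degenerate case $n=1$.
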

\begin{proof}
The data of an automorphism of such an object is that of an automorphism of $f$ that induces the identity on $X^c$, and thus on $X_1$. Thus, we may only apply automorphisms of the components $X_2,\ldots,X_n$ over $Y$. Each component has $a$ automorphisms over $Y$, corresponding to multiplying by $a$-th roots of unity, so we have $a^{n-1}$ in total.
\end{proof}

\begin{lem}\label{adm_eta_3c_mult2}
At every point of intersection of $\pi_{2/1,d}$ and $\eta_E$ where the cover $f$ has type 3C, the (non-stacky, on the level of complete local rings) intersection multipliticity is $na^{n-1}$.
\end{lem}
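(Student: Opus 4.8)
The plan is to compute the intersection multiplicity by explicitly describing the complete local ring of $\Adm_{2/1,d}$ at the point $[f]$ using Proposition \ref{adm_defos}, then identifying the induced map to the complete local ring of $\overline{\cM_2}$ at $[X^c]$, and finally computing the length of the fiber product ring. The key geometric input is that the cover $f$ is ramified over the node $y$ of $Y$ with ramification index $a$ at all $n$ nodes of $X$, whereas the family $\eta_E$ and the versal deformation of $X^c$ behave quite differently near these nodes.

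First I would set up the deformation spaces. The target $(Y, y_1, y_2)$ is a nodal cubic with two marked points, so its versal deformation space has one smoothing parameter $t_1$ for the node $y$ together with parameters for the marked points; modulo the automorphisms the relevant direction is the single node-smoothing $t_1$ plus a direction moving the branch points (equivalently, varying the $j$-invariant of the normalization). By Proposition \ref{adm_defos}, since $X$ has $n$ nodes $x_{1,1}, \ldots, x_{1,n}$ over $y$, each with ramification index $a$, the complete local ring of $\Adm_{2/1,d}$ at $[f]$ is
\begin{equation*}
\bC[[t_1, \ldots, \{t_{1,j}\}_{1 \le j \le n}]]/(t_1 = t_{1,1}^{a} = \cdots = t_{1,n}^{a}),
\end{equation*}
together with the modulus direction varying $X_1 \cong E$. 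Here the $t_{1,j}$ are the smoothing parameters for the $n$ nodes of $X$.

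Next I would match this against the deformation of the source. Upon contracting the unstable rational chain, $X$ contracts to the irreducible one-nodal curve $X^c$, whose complete local ring in $\overline{\cM_2}$ is $\bC[[u,v,w]]$ where (say) $w$ smooths the node of $X^c$ and $u,v$ are moduli of the normalization $X_1$. The crucial point is to see how simultaneously smoothing \emph{all} the nodes of $X$ (which is what a single deformation of $Y$ forces) affects $X^c$. Smoothing all $n$ nodes of $X$ along a deformation where $t_{1,j}^a = t_1$ for every $j$, then contracting the rational chain, smooths the single node of $X^c$; I expect the order of this smoothing to be governed by the total ramification, giving the contribution $a$ from each node balanced against how many nodes collapse, producing the factor $na^{n-1}$. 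Concretely, the induced map $\bC[[u,v,w]] \to \cO_{\Adm, [f]}$ sends $w$ to a monomial whose order in the smoothing parameter accounts for the $n$-fold node collapse and the degree-$a$ ramification, while $u,v$ map isomorphically onto the modulus directions. The fiber product with $\eta_E$, whose tangent direction at $0 \ne x$ moves the two glued points apart (transverse to the modulus of $X_1$), then cuts out a scheme whose length is computed directly from these monomials.

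\textbf{The main obstacle} will be the third step: correctly reading off the order of vanishing of the node-smoothing coordinate $w$ of $X^c$ in terms of the admissible-cover parameters, i.e., carefully tracking the contraction of the rational $n$-gon and the simultaneous smoothing $t_1 = t_{1,j}^a$ through Knudsen contraction. This requires an honest local computation in the total space of the family — analogous to, but more involved than, the rational-double-point analysis in the proof of Lemma \ref{adm_delta01_mult2} — to confirm that the $n$ smoothings contribute multiplicatively and that the ramification index $a$ enters with the stated exponent, yielding exactly $na^{n-1}$. I would also need to verify that the automorphism factor of Lemma \ref{adm_eta_3c_set} is correctly separated from this non-stacky count, so that the stacky and scheme-theoretic multiplicities are not conflated.
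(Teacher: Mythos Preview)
Your overall strategy matches the paper's: write down the complete local ring of $\Adm_{2/1,d}$ at $[f]$ via Proposition~\ref{adm_defos}, identify the induced map $\rho$ from the complete local ring $\bC[[x,y,z]]$ of $\overline{\cM_2}$ at $[X^c]$, and compute the length of the fiber product with $\eta_E$. But your heuristic for how the factor $na^{n-1}$ arises is miscalibrated, and this is exactly the step you flag as the obstacle.

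The point you have not pinned down is that the node-smoothing parameter $y$ of $X^c$ (your $w$) maps under $\rho$ to $t_1\cdots t_n$ times a unit --- the \emph{product} of the individual node-smoothing parameters of $X$, with \emph{no} appearance of the ramification index $a$. The paper establishes this in two moves: killing any single $t_i$ leaves the deformation of $X^c$ nodal (so each $t_i$ divides $\rho(y)$), and along the diagonal $t_1=\cdots=t_n$ the contraction of the rational chain resolves an $A_n$ surface singularity in the total space, forcing $\rho(y)$ to have order exactly $n$. Hence the factor $n$ comes entirely from $\rho(y)$, while the factor $a^{n-1}$ comes entirely from the $n-1$ relations $t_1^a=\cdots=t_n^a$ already present in the local ring of $\Adm_{2/1,d}$. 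The fiber-product ring is then the complete intersection
\[
\bC[[t_1,\ldots,t_n]]\big/\bigl(t_1^a-t_2^a,\ \ldots,\ t_1^a-t_n^a,\ t_1\cdots t_n\bigr)
\]
of local hypersurfaces of degrees $a,\ldots,a,n$, giving length $na^{n-1}$. Your phrasing (``the contribution $a$ from each node'') suggests you expect $a$ to enter through $\rho(w)$ itself, which would lead the bookkeeping astray. One further point you gloss over: the claim that the modulus direction $s$ of $(Y,y_1,y_2)$ maps nontrivially onto the $j$-invariant direction of $X_1$ (your ``$u,v$ map isomorphically'') genuinely uses that $E$ is general, via generic unramifiedness of a Hurwitz-type map $M_{0,4}\to\cM_{1,1}$; this is what lets you eliminate $s$ when cutting by $\rho(x)$.
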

\begin{proof}
By Proposition \ref{adm_defos}, the complete local ring at $f$ is
\begin{equation*}
\bC[[s,t,t_1,\ldots,t_n]]/(t=t_1^a=\cdots=t_n^a),
\end{equation*}
which is canonically a $\bC[[s,t]]$-algebra via the forgetful map $\psi_{2/1,d}:\Adm_{2/1,d}\to\overline{\cM_{1,2}}$. Here, $t$ is a smoothing parameter for the node of $Y$ and $t_i$ is a smoothing parameter for the node $X_i\cap X_{i+1}$ (where we denote $X_{n+1}=X_1$). The parameter $s$ corresponds to the deformation of the base that moves $y_1$ and $y_2$ apart. More precisely, the quotient map $\bC[[s,t]]\to\bC[[s]]$ is induced by the morphism $M_{0,4}\to\overline{\cM_{1,2}}$ sending $(\bP^1,0,1,\infty,\lambda)$ to the marked nodal cubic $(\bP^1/(0\sim 1),\infty,\lambda)$.

Let $\bC[[x,y,z]]$ be the complete local ring of $\overline{\cM_2}$ at $[X^c]$, where the coordinates are chosen as follows. The coordinate $z$ is the deformation parameter for a deformation of $X^c$ that moves apart the nodes of $X_1$ (more precisely, the quotient map $\bC[[x,y,z]]\to\bC[[z]]$ is induced by $\eta_E$). The coordinate $x$ is the deformation parameter of $X^c$ that varies the elliptic curve $(X_1,x_1)$. Finally, the coordinate $y$ is a smoothing parameter for the node of $X^c$.

The morphism $\pi_{2/1,d}$ thus induces a map on complete local rings
\begin{equation*}
\rho:\bC[[x,y,z]]\to \bC[[s,t,t_1,\ldots,t_n]]/(t=t_1^a=\cdots=t_n^a),
\end{equation*}

We claim that, possibly after harmless renormalizations of the coordinates above, we have 
\begin{align*}
\rho(x)&=s+v_x\\
\rho(y)&= t_1\cdots t_nu_y,
\end{align*}
where $v_x\in(t_1,\ldots,t_n)$ and $u_y$ is a unit.

From here, we will conclude that the complete local ring of $E\times_{\overline{\cM_2}}\Adm_{2/1,d}$ is of the form
\begin{equation*}
\bC[[t_1,\ldots,t_n]]/(t_1^a-t_i^a,t_1\cdots t_n),
\end{equation*}
which is a complete intersection of local hypersurfaces of degrees $a,\ldots,a,n$, and the conclusion follows.

The claim above follows from the following three statements:
\begin{enumerate}
\item[(a)] $\rho(x)=s$ after killing all of the $t_i$.
\item[(b)] $\rho(y)=0$ after killing any of the $t_i$.
\item[(c)] $\rho(y)=ct_1^n+O(t_1^{n+1})$ after setting all of the $t_i$ equal to $t_1$ and killing $s$, where $c\in\bC^{*}$.
\end{enumerate}

First, consider (a). Killing all of the $t_i$ corresponds to taking a 1-parameter deformation of $X^c$ that results from applying the deformation of $(Y,y_1,y_2)$ corresponding to $\bC[[s,t]]\to\bC[[s]]$. Let $\widetilde{Y}\cong\bP^1$ be the normalization of $Y$, so that $X_1$ is a 4-point cover of $\widetilde{Y}$, ramified over $0,1,y_1=\infty,y_2=\lambda$. It suffices to show that varying $\lambda$ to first order varies the elliptic curve $(Y,f^{-1}(\infty))$ to first order as well. Indeed, after specifying the appropriate monodromy above $0,1,\infty,\lambda$, we get a dominant and thus generically unramified map of stacky curves $M_{0,4}\to\cM_{1,1}$ sending $(0,1,\lambda,\infty)$ to the unique elliptic curve branched over $\widetilde{Y}$ in the specified way. As $E$ is chosen to be general, the claim follows, and after renormalizing the coordinate $s$ we may conclude $\rho(x)=s\pmod{(t_1,\ldots,t_n)}$.

Statement (b) is immediate from the fact that killing any $t_i$ results in a deformation of $X^c$ that is nodal on the general fiber.

Finally, statement (c) says that if we smooth the nodes of $X$ simultaneously and do not vary the component $X_1$, contracting the rational components on the special fiber yields a smoothing of order $n$ at the contracted point. Indeed, the contraction map is resolution of an $A_n$-surface singularity in the total space of the deformation of $X$. This completes the proof.
\end{proof}

\begin{cor}\label{adm_eta_3c_total}
The contribution of $[\pi_{2/1,d}]\cdot [\eta_E]$ from covers of type 3C is $2\sigma(d)(d-1)p$.
\end{cor}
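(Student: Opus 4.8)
The plan is to combine the two local computations of Lemmas~\ref{adm_eta_3c_set} and~\ref{adm_eta_3c_mult2} with a global count of the type 3C points of the fiber product $E\times_{\overline{\cM_2}}\Adm_{2/1,d}$. Since $\overline{\cM_2}$ is smooth and $\Adm_{2/1,d}$ is Cohen--Macaulay, the product $[\pi_{2/1,d}]\cdot[\eta_E]$ is represented by this fiber product, and its degree is computed by summing, over isomorphism classes of geometric points $\xi$, the local intersection multiplicity divided by $|\mathrm{Aut}(\xi)|$. For a type 3C point with invariants $d=an$, Lemma~\ref{adm_eta_3c_mult2} gives a non-stacky multiplicity $na^{n-1}$ and Lemma~\ref{adm_eta_3c_set} gives $|\mathrm{Aut}(\xi)|=a^{n-1}$, so each isomorphism class contributes exactly $n$ to the intersection number. (As a consistency check, this is the analogue of the contribution $2/1$ per point in the type 2B count of Corollary~\ref{adm_eta_2b_total}.)

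Next I would enumerate the type 3C isomorphism classes, organized by the divisor $a\mid d$ with $a\ge2$ and $n=d/a$. By the description in \S\ref{admissible_classification}, such a point has $X^c=\eta_E(x)=E/(0\sim x)$ for some $x\neq 0$, and the normalization $X_1\cong E$ must carry a degree $a$ map to $\bP^1$ totally ramified at the two glued points, which under the identification $X^c\cong\eta_E(x)$ are $0$ and $x$. By Lemma~\ref{count_covers_from_elliptic_curve}, such a cover exists precisely when $x\in E[a]-\{0\}$, is then unique up to automorphisms of the target, and for general $E$ is simply branched over two distinct points $y_1,y_2$. Thus the cover $f$ is determined by the choice of $x\in E[a]-\{0\}$ ($a^2-1$ possibilities) together with the ordering of $y_1,y_2$ (a factor of $2$). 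The essential point, in contrast to types 2B/2C, is that this ordering is \emph{not} absorbed by a cover automorphism, so it genuinely doubles the count, yielding $2(a^2-1)$ isomorphism classes for each $a$.

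Summing the contribution $n=d/a$ over all of these, I would compute
\begin{align*}
\sum_{\substack{a\mid d\\ a\ge2}}2(a^2-1)\cdot\frac{d}{a}\,p
&=2d\sum_{a\mid d}\left(a-\frac{1}{a}\right)p\\
&=2d\left(\sigma_1(d)-\frac{\sigma_1(d)}{d}\right)p=2\sigma_1(d)(d-1)\,p,
\end{align*}
using $\sum_{a\mid d}a^{-1}=\sigma_1(d)/d$ and that the $a=1$ term vanishes; this is exactly $2\sigma(d)(d-1)p$. (One can verify the answer directly for small $d$, e.g. $d=4$ gives $2(3)(2)+2(15)(1)=42=2\sigma(4)\cdot 3$.)

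The main obstacle is the careful bookkeeping of automorphisms and the ordering factor. One must check that dividing the non-stacky length by $|\mathrm{Aut}(\xi)|$ correctly computes the stacky contribution, and---more delicately---that the factor of $2$ from ordering $y_1,y_2$ is counted exactly once. Here I would analyze the involution $\beta\colon z\mapsto -z+x$ of $E$, which swaps the two branches $0,x$ of the node of $X^c$: using the uniqueness in Lemma~\ref{count_covers_from_elliptic_curve} one sees that $\beta$ is a symmetry of the cover up to an automorphism of the target fixing the two totally ramified values, so that the naive $\bZ/2\times\bZ/2$ generated by ``swap branches'' and ``swap branch points'' collapses to a single $\bZ/2$. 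Confirming this collapse is precisely what guarantees the count is $2(a^2-1)$ rather than $a^2-1$ or $4(a^2-1)$.
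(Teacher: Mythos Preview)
Your argument is correct and follows the same route as the paper: combine Lemmas~\ref{adm_eta_3c_set} and~\ref{adm_eta_3c_mult2} to see that each type 3C isomorphism class with invariants $d=an$ contributes $n$, then count $2(a^2-1)$ such classes for each divisor $a\mid d$ with $a\ge2$ using Lemma~\ref{count_covers_from_elliptic_curve}, and sum. The paper's proof is terser but structurally identical.

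One small slip in your final paragraph: the target automorphism $\psi$ determined by $g\circ\beta=\psi\circ g$ \emph{swaps} the two totally ramified values $g(0),g(x)$ rather than fixing them (since $\psi(g(0))=g(\beta(0))=g(x)$). This does not affect your conclusion: whether $\psi$ fixes or swaps $y_1,y_2$, the four naive objects built from the two choices of $h$ and the two labelings of $(y_1,y_2)$ pair off into exactly two isomorphism classes. In fact the simpler justification---any isomorphism of triples with the same $h$ forces $\alpha_X|_{X_1}=\mathrm{id}$, hence $\alpha_Y=\mathrm{id}$, hence the labeling is preserved---already shows the factor of $2$ is genuine, and your $\beta$-analysis is not needed.
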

\begin{proof}
By Lemmas \ref{adm_eta_3c_set} and \ref{adm_eta_3c_mult2}, we get a contribution of $n$ in this intersection for each admissible cover of type 3C where $X$ has $n$ components and has $X_1\cong E$. By Lemma \ref{count_covers_from_elliptic_curve}, there is one such cover for each nonzero $(d/n)$-torsion point of $E$, disregarding the marked points on the base. When $a=d/n=2$, these covers are all the same, but the non-zero torsion points give rise to a different admissible covers, distinguished by where the rational components are attached on $X_1$. Because we have two choices for the markings of the simple branch points of $f$, the total contribution to $[\pi_{2/1,d}]\cdot [\eta_E]$ from covers of type 3C is
\begin{align*}
2\left(\sum_{an=d}(a^2-1)n\right)p=2\left(\sum_{a|d}\left(ad-\frac{d}{a}\right)\right)p=2\sigma(d)(d-1)p.
\end{align*}
\end{proof}

\section{The class of $\pi_{2/1,d}$}\label{main_pf}

We now combine the results of the previous two sections to complete the main calculation.

\begin{proof}[Proof of Theorem \ref{main_thm}]
Let $[\pi_{2/1,d}]=x\delta_0+y\delta_1$. By Corollary \ref{adm_intersect_delta01_number} and Theorem \ref{M2_relations}, we have, from intersecting $\pi_{2/1,d}$ with $t_E$,
\begin{equation*}
\frac{1}{4}x-\frac{1}{48}y=\left(\frac{1}{12}-\frac{d}{2}\right)\sigma_1(d)+\frac{5}{12}\sigma_{3}(d).
\end{equation*}
Combining Corollaries \ref{class_of_etaE}, \ref{adm_eta_2b_total}, and \ref{adm_eta_3c_total} with Theorem \ref{M2_relations}, we also have, from intersecting $\pi_{2/1,d}$ with $\eta_E$,
\begin{equation*}
-2x+y=4\sigma_1(d)(d-1).
 \end{equation*}
Solving for $x$ and $y$, we obtain
\begin{equation*}         
[\pi_{2/1,d}]=\left(2\sigma_{3}(d)-2d\sigma_{1}(d)\right)\delta_0+\left(4\sigma_{3}(d)-4\sigma_{1}(d)\right)\delta_1,
\end{equation*}
as desired.
\end{proof}

We now prove Corollary \ref{modularity_statement}, that the coefficients of $\delta_0$ and $\delta_1$, when assembled into generating functions, are quasi-modular forms.

\begin{proof}[Proof of Corollary \ref{modularity_statement}]
Recall that $\Qmod$ is generated over $\bQ$ by the Eisenstein series
\begin{equation*}
E_{2n}(q)=1+\frac{(2\pi i)^{2n}}{(2n-1)!\cdot\zeta(2n)}\sum_{k=1}^{\infty}\frac{k^{2n-1}q^{2k}}{1-q^{2k}}.
\end{equation*}
Moreover, we have the standard formulas
\begin{align*}
\sum_{d=1}^{\infty}\sigma_1(d)q^k&=\sum_{k=1}^{\infty}\frac{kq^k}{1-q^k},\\
\sum_{d=1}^{\infty}\sigma_3(d)q^k&=\sum_{k=1}^{\infty}\frac{k^3q^k}{1-q^k}
\end{align*}
We also have the Ramanujan differential equation
\begin{equation*}
q\frac{dP}{dq}=\frac{P^2-Q}{12},
\end{equation*}
where
\begin{align*}
P&=1-24\sum_{k=1}^{\infty}\frac{kq^k}{1-q^k}\\
Q&=1+240\sum_{k=1}^{\infty}\frac{k^3q^k}{1-q^k},
\end{align*}
see \cite{by}.
Therefore, we may write
\begin{equation*}
\sum_{d}[\pi_{2/1,d}]q^d=T_0\delta_0+T_1\delta_1,
\end{equation*}
where
\begin{equation*}
T_0=\frac{E_4-E_2^2}{6}+\frac{E_4-1}{120}
\end{equation*}
and
\begin{equation*}
T_1=\frac{E_4-1}{60}+\frac{1-E_2}{6}.
\end{equation*}
The conclusion is immediate.
\end{proof}

We end this section by speculating about possible generalizations of these results to higher genus.

\begin{question}
For any $g\ge 2$, let $\pi_{g/1,d}:\Adm_{g/1,d}\to\overline{\cM_g}$ be the map remembering the stabilized source of an admissible cover of a genus 1 curve by a genus $g$ curve.
\begin{enumerate}
\item[(a)] Is $[\pi_{g/1,d}]\in A^{*}(\cM_g)$ tautological?
\item[(b)] Let $\alpha\in A^{*}(\cM_g)$ be a fixed class of complimentary dimension to $\pi_{g/1,d}$. Is
\begin{equation*}
\sum_{d\ge2}\left(\int_{\overline{\cM_g}}[\pi_{g/1,d}]\cdot\alpha\right) q^d\in\Qmod\text{ ?}
\end{equation*}
\end{enumerate}
\end{question}

In light of the examples of \cite{gp} and \cite{vanzelm}, it seems natural to expect that the answer to (a) is no when $d$ is fixed and $g$ is sufficiently large compared to $d$. Analogous quasi-modularity results to (b) hold, in the setting of Gromov-Witten theory, for covers of \textit{fixed} elliptic curves, see \cite[\S 5]{op}. Our method of computation extends, in principle, to higher genus, provided one can classify admissible covers and produce enough test classes, but this approach is clearly impractical without new ideas. We point out that Schmitt-van Zelm \cite{schmittvanzelm} have developed an algorithm for intersecting tautological classes on $\overline{\cM_g}$ with loci of \textit{Galois} admissible covers, whose deformation spaces are smooth.

\section{Proof of Theorem \ref{enum_thm}}\label{application}

In this section, we give an application of Theorem \ref{main_thm}. Given five very general points $x_1,\ldots,x_5\in\bP^1$, we compute the number of points $x_6$ such that the hyperelliptic curve branched over the $x_i$ is $d$-elliptic. We may associate to $x_1,\ldots,x_5$ a 1-parameter family $\mu_P$, and compute its intersection with $\mu_{2/1,d}$; after exhibiting enough control on the covers that appear in this intersection, we deduce Theorem \ref{enum_thm}.

\subsection{The Class of $\mu_P$}

Let $P\in\bC[x]$ be a square-free monic polynomial of degree 5. We define a map $\mu_P:\bP^1\to\overline{\cM_2}$ sending $t$ to the hyperelliptic curve branched over $t^2$ and the roots of $P$, and denote the corresponding class by $[\mu_P]\in A^2(\overline{\cM_2})$. More precisely, the total space $X$ has charts
\begin{align*}
U_1&=\Spec\bC[x,y,t]/(y^2-P(x)(x-t^2))\\
U_2&=\Spec\bC[x,y',x]/({y'}^{2}-P(x)(s^2x-1))\\
U_3&=\Spec\bC[u,v,t]/(v^2-u^5P(u^{-1})(1-ut^2))\\
U_4&=\Spec\bC[u,v',s]/({v'}^2-u^5P(u^{-1})(s^2-u)).
\end{align*}

The transition functions are as follows: between $U_1$ and $U_2$, we have $t=1/s$ and $y=ity'$ (where $i$ is a square root of $-1$), between $U_1$ and $U_3$, we have $u=1/x$ and $v=u^3y$, and between $U_2$ and $U_4$, we have $u=1/x$ and $v'=u^3y'$. Then, we have a family $g:X\to\bP^1=\Proj\bC[s,t]$ of stable genus 2 curves.

\begin{prop}\label{class_of_mu}
We have $[\mu_P]=16\delta_{00}+96\delta_{01}$ in $A^2(\overline{\cM_2})$.
\end{prop}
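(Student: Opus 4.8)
The class $[\mu_P]$ lives in $A^2(\overline{\cM_2})$, which by Theorem \ref{M2_relations} is spanned by $\delta_{00}$, $\delta_{01}$, and $p$. Since $\mu_P:\bP^1\to\overline{\cM_2}$ has $1$-dimensional source, $[\mu_P]$ is a codimension-$2$ (hence dimension-$1$) bivariant class, and the plan is to pin down its $\delta_{00}$ and $\delta_{01}$ coefficients by pairing against a suitable basis of $A^1(\overline{\cM_2})$, exactly as was done for $[\eta_E]$ in Corollary \ref{class_of_etaE}. Concretely, I would write $[\mu_P]=a\,\delta_{00}+b\,\delta_{01}$ (the $p$ term is irrelevant, as $p$ is top-dimensional) and determine $a,b$ by computing the two intersection numbers $[\mu_P]\cdot\lambda_1$ and $[\mu_P]\cdot\delta_1$, then inverting the resulting linear system using the relations $\delta_{00}\cdot\lambda_1$, $\delta_{01}\cdot\lambda_1$, $\delta_{00}\cdot\delta_1$, $\delta_{01}\cdot\delta_1$ obtained from Theorem \ref{M2_relations} (the latter two are listed directly; the former two follow from $10\lambda_1=\delta_0+2\delta_1$ together with the listed products).

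\emph{First step: the Hodge/$\lambda_1$ pairing.} I would compute $[\mu_P]\cdot\lambda_1=\deg g_*\omega_{X/\bP^1}$ directly from the explicit family $g:X\to\bP^1$ given by the four charts $U_1,\dots,U_4$. This is a Grothendieck--Riemann--Roch (or direct Hodge-bundle) computation analogous to Lemma \ref{hodge_degree0}, but now the total space $X$ is a genus-$2$ fibration over $\bP^1$ rather than over an elliptic curve, so the Todd class of the base contributes and the answer will be nonzero. The degeneracy of the hyperelliptic family occurs where the branch divisor $\{t^2\}\cup\{\text{roots of }P\}$ becomes non-reduced, i.e. where $t^2$ collides with a root of $P$ or with itself; counting these degenerations with the correct local contributions, together with the GRR computation, yields the number $[\mu_P]\cdot\lambda_1$.

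\emph{Second step: the $\delta_1$ pairing.} Here I would enumerate the fibers of $g$ lying in $\Delta_1$, i.e. the values of $t$ for which the hyperelliptic curve branched over $t^2$ and the roots of $P$ degenerates to a pair of elliptic curves glued at a point. For a hyperelliptic genus-$2$ curve, a separating (compact-type) degeneration occurs precisely when the six branch points split $3+3$ under a specialization in which two groups of three collide pairwise in the appropriate way; for the family at hand this happens at finitely many values of $t$, and at each I would verify transversality (as in Lemma \ref{etaE_and_delta1_transverse}) by checking that the deformation induced by varying $t$ genuinely smooths the separating node. The resulting count gives $[\mu_P]\cdot\delta_1$, which must then be multiplied or divided by the appropriate degree of $b_1$ over $\Delta_1$.

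\emph{Main obstacle.} The delicate point is bookkeeping the boundary degenerations of the hyperelliptic family and extracting the correct \emph{multiplicities}, not merely the set-theoretic count: the map $\mu_P$ may meet $\Delta_0$, $\Delta_{00}$, and $\Delta_{01}$ with nontrivial local intersection multiplicities coming from the square $t^2$ appearing in the branch locus (which forces the branch point to approach a root of $P$ to order $2$, doubling several contributions, much as the $y\mapsto t^2$ substitution doubled multiplicities in Lemmas \ref{adm_delta01_mult2} and \ref{adm_eta_2b_mult2}). I expect the $t^2$ to be responsible for the factors of $16=2^4$ and for part of the $96$, so the careful part is tracking how each collision of $t^2$ with a root of $P$, and the collision $t^2=0$, contributes to the two intersection numbers. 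Once both numbers are in hand, solving the $2\times 2$ system against the known intersection table gives $[\mu_P]=16\delta_{00}+96\delta_{01}$.
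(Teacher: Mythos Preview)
Your overall strategy --- write $[\mu_P]=a\delta_{00}+b\delta_{01}$ and determine $a,b$ by pairing against two divisor classes --- is exactly what the paper does. But the paper's execution is much simpler than what you propose, and your second step contains a misconception.

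\emph{Comparison.} The paper pairs against $\delta_1$ and $\delta_0$ (not $\lambda_1$). For $\delta_1$: every fiber of $g$ is a double cover of $\bP^1$ branched along a divisor of degree~$6$ (or degree~$5$ with one point of multiplicity~$2$ when $t^2$ coincides with a root of~$P$), hence \emph{irreducible}; thus $[\mu_P]\cdot\delta_1=0$ immediately, with no enumeration needed. For $\delta_0$: the paper intersects with $b_0:\overline{\cM_{1,2}}\to\overline{\cM_2}$ (class $2\delta_0$). The nodal fibers occur exactly at the $10$ values of~$t$ with $t^2$ a root of $P$; each contributes two points of the fiber product (from the two markings of the node), and each has multiplicity~$2$ because the total space $X$ has an ordinary double point there. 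Hence $[\mu_P]\cdot\delta_0=20p$, and the linear system from Theorem~\ref{M2_relations} gives $(a,b)=(16,96)$.

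\emph{Where your proposal goes astray.} Your description of the $\delta_1$-pairing anticipates reducible (compact-type) fibers arising from a ``$3+3$ splitting'' of branch points. No such fibers exist in this family: only one branch point $t^2$ moves, and its collision with a root of $P$ produces a curve in $\Delta_0$, never in $\Delta_1$. If you carried out your plan you would eventually discover the intersection is empty, but your stated expectation (``this happens at finitely many values of $t$'' with a transversality check to follow) is misleading. Separately, the GRR computation of $[\mu_P]\cdot\lambda_1$ is correct in principle (the answer is $2p$, consistent with $10\lambda_1=\delta_0+2\delta_1$) but is considerably more work than the paper's direct nodal-fiber count against $\delta_0$; there is no need to invoke Todd classes or track canonical bundles on the explicit charts.
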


\begin{proof}
First, observe that $[\mu_P]\cdot\delta_1=0$; indeed, every member of the family is irreducible. Next, we compute $[\mu_P]\cdot\delta_0$, by intersecting $\mu_P$ with the morphism $b_0:\overline{\cM_{1,2}}\to\overline{\cM_2}$; we have $[b_0]=2\delta_0$. Set-theoretically, there are 10 nodal fibers of the family $\mu_P$, corresponding to the points where $t^2$ becomes equal to one of the 5 roots of $P$. Thus, there are 20 $\bC$-points in the intersection of $\mu_P$ and $b_0$, which clearly have no automorphisms. The total space $X$ of the family $\mu_P$ has an ordinary double point at each node, so each point has intersection multiplicity 2. Thus, $[\mu_P]\cdot\delta_0=20p$, and the formula for $[\mu_P]$ follows from Theorem \ref{chow_ring_m2}.
\end{proof}

%
%

\subsection{Intersection with $\pi_{2/1,d}$}\label{control_intersection}

\begin{lem}\label{mu_and_pi_type1}
Suppose $P$ is a general monic square-free polynomial of degree 5. Then, $\mu_P$ and $\pi_{2/1,d}$ intersect in the dense open substack $\Adm_{2/1,d}^{\circ}\subset\Adm_{2/1,d}$ of covers of type 1, i.e., covers $f:X\to Y$ with $X$ and $Y$ smooth.
\end{lem}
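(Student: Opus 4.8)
The plan is to combine an explicit description of the singular fibers of $\mu_P$ with the classification of \S\ref{admissible_classification}, and then a genericity argument in the space of polynomials $P$. First I would use the charts $U_1,\ldots,U_4$ to pin down the singular fibers of $g\colon X\to\bP^1$. The fiber over $t$ is the double cover of $\bP^1$ branched at the roots of $P$ together with $t^2$, so it degenerates exactly when $t^2$ equals a root of $P$; for general $P$ this occurs at the $10$ points $t=\pm\sqrt{r_i}$, each giving an irreducible curve with a single node whose normalization $E_i$ is a smooth genus $1$ curve. A direct check in $U_3,U_4$ shows the fibers over $t=0$ and $t=\infty$ are smooth (at $s=0$ the branch divisor is $\{u=0,1/r_1,\ldots,1/r_5\}$, which is reduced). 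Since the defining polynomial is never a perfect square every fiber is irreducible, and since $P$ is squarefree at most one pair of branch points collides at a time, so every fiber has at most one node. Hence $\mu_P$ meets $\partial\overline{\cM_2}$ only along $\Delta_0$, avoiding $\Delta_1$ and the deeper strata $\Delta_{00},\Delta_{01}$.

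By the classification of \S\ref{admissible_classification}, the only covers $f$ with $\pi_{2/1,d}([f])$ a smooth genus $2$ curve are those of types $1$, $2$A, and $2$A$'$, while the only covers with $\pi_{2/1,d}([f])\in\Delta_0\setminus\Delta_{00}$ (an irreducible one-nodal curve) are those of types $2$B and $3$C. Given the previous paragraph, it therefore suffices to (i) rule out types $2$B and $3$C over the nodal fibers, and (ii) rule out types $2$A and $2$A$'$ over the smooth fibers.

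For (i) the key observation is that the two preimages $p_i,q_i$ of the node of the fiber over $t=\pm\sqrt{r_i}$ come from a pair of colliding branch points, hence are exchanged by the elliptic involution of $E_i$. Thus the divisor class $[p_i-q_i]\in\Pic^0(E_i)$ equals $2[p_i-O]$ for a Weierstrass point $O$, and for general $P$ this class is non-torsion. On the other hand, the classification shows that for covers of types $2$B and $3$C the two glued points differ by a torsion class of order dividing $d$. Since avoiding these finitely many torsion conditions is an open dense condition on $P$, for general $P$ none of the nodal fibers lies in the image of $\pi_{2/1,d}$, and (i) follows.

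For (ii) I would argue by a dimension count, which I expect to be the main obstacle. The locus $Z\subset\Adm_{2/1,d}$ of covers of types $2$A/$2$A$'$ lies in the boundary, hence has dimension at most $1$, and $\pi_{2/1,d}|_Z$ is finite onto its image (for fixed $X_2$ there are only finitely many degree $d$ covers $X_2\to E'$ with branch divisor $2y$), so $\pi_{2/1,d}(Z)\subset\cM_2$ is a curve. Consider the total family $\mathcal{X}=\{(P,t)\}\to\overline{\cM_2}$, $(P,t)\mapsto\mu_P(t)$, which is dominant of relative dimension $3$. The incidence locus $\{(P,t):\mu_P(t)\in\pi_{2/1,d}(Z)\}$ is the preimage of the curve $\pi_{2/1,d}(Z)$, hence has dimension at most $4$, strictly less than the dimension $5$ of the space of polynomials $P$; so its image under projection to the $P$-space is a proper closed subset and a general $\mu_P$ is disjoint from $\pi_{2/1,d}(Z)$. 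The point requiring care is precisely that $\mathcal{X}$ dominates $\overline{\cM_2}$ with the expected fiber dimension, i.e.\ that fixing five branch points and varying the sixth sweeps out a sufficiently general pencil through a general genus $2$ curve; this is what makes the incidence dimension drop. Combining (i) and (ii), every point of $\bP^1\times_{\overline{\cM_2}}\Adm_{2/1,d}$ is a cover of type $1$, as claimed.
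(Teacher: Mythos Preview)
Your approach is essentially the paper's: isolate the bad locus in $\overline{\cM_2}$ and show a general $\mu_P$ misses it by a dimension count. You split into smooth and nodal fibers and handle the nodal case (i) by the explicit observation that the glued points $p_i,q_i$ are exchanged by the elliptic involution, whence $[p_i-q_i]=2[p_i-O]$ moves freely on $E_i$ as $r_i$ varies; the paper instead folds the $d$-torsion--gluing nodal locus into the same $1$-dimensional $Z$ and applies the single incidence dimension count you give in (ii) to both cases at once. Your version is more concrete and makes the torsion obstruction visible, at the cost of a small extra computation.

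There is one omission in your enumeration: in the second paragraph, types 4C and 4C$'$ also contract to irreducible one-nodal curves (see \S\ref{admissible_classification}), so the list of types with $\pi_{2/1,d}([f])\in\Delta_0\setminus\Delta_{00}$ is incomplete. This does not break the argument: for 4C/4C$'$ the glued points on $X_1$ are again the two total ramification points of a degree-$a$ map to $\bP^1$ with $a\mid d$, hence differ by $a$-torsion, and your argument (i) applies verbatim. Alternatively, as the paper observes, in types 4C/4C$'$ the normalization $X_1$ is a $3$-point cover of $\bP^1$, so only finitely many $j$-invariants occur and the image in $\overline{\cM_2}$ is zero-dimensional; your dimension count in (ii) then disposes of them immediately.
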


\begin{proof}
We the family $\mu_P$ to avoid two loci: smooth curves arising from covers of type 2A/2A' as contractions of their sources, and irreducible nodal curves arising as elliptic curves with a $d$-torsion point glued to its origin. Let $Z\subset\overline{\cM_2}$ be the union of these loci; note that $Z$ is 1-dimensional. Given a point $[C]\in\overline{\cM_2}$ in the image of some $\mu_P$, there is a 3-dimensional family of $P$ such that $\mu_P$ passes through $[C]$, and a 5-dimensional space of $P$, so $P$ may be chosen so that $\mu_P$ avoids $Z$.
\end{proof}

\begin{lem}\label{control_aut_groups}
Suppose $P$ is a general monic square-free polynomial of degree 5, and $s\in\bC$ is not a root of $P$. Then, the genus 2 curve $C$ associated to the affine equation $y^2=f(x)(x-s)$ has either $\#\mathrm{Aut}(C)=2$ or $\#\mathrm{Aut}(C)=4$. The latter occurs if and only if $C$ is bielliptic.
\end{lem}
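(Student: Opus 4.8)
The key structural input is the classical description of $\mathrm{Aut}(C)$ for a smooth genus $2$ curve. Writing $C$ as the double cover $C\to\bP^1$ branched over the six points $B=\{r_1,\ldots,r_5,s\}$ (the roots of $P$ together with $s$), the hyperelliptic involution $\iota$ is central in $\mathrm{Aut}(C)$, and there is an exact sequence $1\to\langle\iota\rangle\to\mathrm{Aut}(C)\to\bar G\to 1$, where $\bar G\le\mathrm{PGL}_2$ is the subgroup stabilizing the divisor $B$. For hyperelliptic curves this sequence is right-exact (every projective symmetry of $B$ lifts), so $\#\mathrm{Aut}(C)=2\#\bar G$. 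Thus I would split the lemma into a \emph{structural} assertion, that $\#\mathrm{Aut}(C)=4$ (equivalently $\#\bar G=2$) holds precisely when $C$ is bielliptic, and a \emph{genericity} assertion, that for general $P$ no fiber $C$ has $\#\bar G\ge 3$.

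For the structural part I would invoke the Bolza--Igusa classification of the possible groups $\mathrm{Aut}(C)$, according to which the only group of order $4$ that occurs is the Klein four-group $V_4$. (The other abstract possibility $\mathbb Z/4$ would arise when the generating involution of $\bar G$ fixes two branch points, so that its lift has order $4$ with square $\iota$; but any such branch configuration automatically carries a further symmetry of $\bP^1$, forcing $\#\mathrm{Aut}(C)\ge 8$.) If $\#\mathrm{Aut}(C)=4$ then $\mathrm{Aut}(C)=V_4=\langle\iota,\tau\rangle$ with $\tau$ an involution distinct from $\iota$. Applying the standard genus formula for a $V_4$-action (with $C/\iota\cong\bP^1$ and $C/V_4\cong\bP^1$), the two nontrivial quotient genera satisfy $g(C/\tau)+g(C/\iota\tau)=g(C)=2$; since $\iota$ is the unique involution with rational quotient, neither quotient can be rational, so $g(C/\tau)=1$ and $C$ is bielliptic. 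Conversely a bielliptic $C$ admits an involution $\tau\neq\iota$, whence $V_4\subseteq\mathrm{Aut}(C)$ and $\#\mathrm{Aut}(C)\ge 4$; and if $\#\mathrm{Aut}(C)=2$ the only involution is $\iota$, so $C$ is not bielliptic.

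For the genericity part I would bound the bad locus and then run a dimension count in the style of Lemma \ref{mu_and_pi_type1}. Let $\mathcal B\subset\cM_2$ be the locus of curves with $\#\mathrm{Aut}(C)\ge 6$. By the classification, the strata comprising $\mathcal B$ (those with $\mathrm{Aut}(C)$ equal to $D_8$, $D_{12}$, $\mathbb Z/10$, or $\mathrm{GL}_2(\mathbb F_3)$) all have dimension $\le 1$, so $\dim\mathcal B\le 1$. Now consider the $6$-dimensional parameter space of pairs $(P,s)$ and the map $(P,s)\mapsto[C]\in\cM_2$: the six branch points determine $C$ up to the $3$-dimensional group $\mathrm{PGL}_2$ (and finitely many choices of which point is $s$), so every fiber is $3$-dimensional. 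Hence the preimage of $\mathcal B$ has dimension at most $\dim\mathcal B+3\le 4$, and its image under projection to the $5$-dimensional space of polynomials $P$ has dimension at most $4<5$. Therefore for general $P$ no $s$ yields $[C]\in\mathcal B$, and combined with the structural part we conclude $\#\mathrm{Aut}(C)\in\{2,4\}$, with value $4$ attained exactly on the bielliptic fibers.

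The main obstacle is the structural subtlety in the order-$4$ case: an involution of $\bP^1$ stabilizing the branch set lifts either to an involution of $C$ (the bielliptic $V_4$ case) or to an order-$4$ element with square $\iota$, according to whether it fixes $0$ or $2$ of the branch points, and one must verify the latter never produces a full automorphism group of order exactly $4$. This is precisely why the classification of genus $2$ automorphism groups, rather than a naive count, is needed; the remaining dimension estimate is routine.
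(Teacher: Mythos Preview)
Your proof is correct and takes a genuinely different route from the paper's. The paper works entirely on the $\bP^1$ side: it enumerates nine explicit permutation patterns on $\{x_1,\ldots,x_5\}$ (with $x_6$ filling the remaining slot) that would arise from a nontrivial element of $\bar G$ of order $\ge 3$ or from an involution fixing at least one of $x_1,\ldots,x_5$, and checks by hand that for a general choice of $x_1,\ldots,x_5$ none of these is realized by an element of $\mathrm{PGL}_2$. The only surviving possibility is an involution acting as three disjoint transpositions with no fixed branch points; this lifts to a non-hyperelliptic involution of $C$, whose quotient the paper asserts has genus $1$.

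You instead import the Bolza--Igusa classification and run a dimension count on the stratum $\mathcal B=\{[C]:\#\mathrm{Aut}(C)\ge 6\}\subset\cM_2$. This replaces the nine-case check by the single estimate $\dim\mathcal B\le 1$, and your Accola-type relation $g(C/\tau)+g(C/\iota\tau)=2$ pins down the bielliptic quotient without further casework. The paper's argument is more self-contained and elementary; yours is cleaner and makes the moduli stratification do the work, at the cost of invoking the classification (including, as you correctly flag, the fact that an involution of $\bP^1$ fixing two branch points forces $\#\mathrm{Aut}(C)\ge 8$, so $\mathbb Z/4$ never occurs as the full group). Either approach is adequate here; yours would scale better if one needed an analogous statement in a setting where the explicit permutation bookkeeping became unwieldy.
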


\begin{proof}
The assertion is equivalent to the following: a general choice of distinct points $x_1,\ldots,x_5\in\bP^1$ has the property that for all $x_6\in\bP^1-\{x_1,\ldots,x_5\}$, the group of automorphisms of $\bP^1$ fixing the set $\{x_1,\ldots,x_6\}$ has order at most 2, and the order is 2 if and only the hyperelliptic curve branched over $x_1,\ldots,x_6$ is bielliptic.

One checks that for $x_1,\ldots,x_5$ general, there is no automorphism of $\bP^1$ doing any of the following:
\begin{enumerate}
\item $x_1\mapsto x_2\mapsto x_3\mapsto x_4\mapsto x_5$
\item $x_1\mapsto x_2\mapsto x_3\mapsto x_4\mapsto x_1$
\item $x_1\mapsto x_2\mapsto x_3$ and $x_4\mapsto x_5\mapsto x_4$
\item $x_1\mapsto x_2\mapsto x_3\mapsto x_1$ and $x_4\mapsto x_5$
\item $x_1\mapsto x_1$ and $x_2\mapsto x_3\mapsto x_4\mapsto x_2$
\item $x_1\mapsto x_1$ and $x_2\mapsto x_3\mapsto x_4\mapsto x_5$
\item $x_1\mapsto x_1$, $x_2\mapsto x_3\mapsto x_2$ and $x_4\mapsto x_5$
\item $x_1\mapsto x_1$, $x_2\mapsto x_2$, and $x_3\mapsto x_4\mapsto x_3$.
\item $x_1\mapsto x_1$, $x_2\mapsto x_2$, and $x_3\mapsto x_4\mapsto x_5$.
\end{enumerate}
For instance, to prove that $x_1\mapsto x_2\mapsto x_3\mapsto x_1$ and $x_4\mapsto x_5$ is impossible, we may let $(x_1,x_2,x_3)=(0,1,\infty)$, and note that the unique automorphism of $\bP^1$ sending $x_1\mapsto x_2\mapsto x_3\mapsto x_1$ will only send $x_4\mapsto x_5$ if the latter two points are chosen in special position.

We then conclude that a non-trivial permutation $\rho$ of the $x_i$ must be a union of three 2-cycles. An automorphism of $\bP^1$ inducing such a permutation must be an involution. The quotient of the induced involution on $C$ will have genus 1, as the involution is not hyperelliptic.
\end{proof}

\begin{prop}\label{mu_and_pi_intersection_nice}
For a very general monic square-free polynomial $P$ of degree 5, every cover $f:C\to E$ in the intersection of $\mu_P$ and $\pi_{2/1,d}$ has the following properties:
\begin{enumerate}
\item $f$ is an admissible cover of type 1, i.e., $C$ and $E$ are smooth.
\item $J(C)$ is not isogenous to the product of an elliptic curve with itself.
\item Given an automorphism $h$ of $C$, there exists an automorphism $h'$ of $E$ and a cover $f':C\to E$ compatible with $h$ and $h'$.
\item The intersection of $\mu_P$ and $\pi_{2/1,d}$ is transverse (of the expected dimension) at $[f]$.
\end{enumerate}
\end{prop}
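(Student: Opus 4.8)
The plan is to dispatch the four assertions in turn, leaning on the classification of \S\ref{admissible_classification} and the split-Jacobian results of \S\ref{split_jac}, and otherwise reducing everything to incidence/dimension counts on the $5$-dimensional space of admissible polynomials $P$. Assertion (i) is exactly Lemma \ref{mu_and_pi_type1}, so it costs nothing beyond imposing the (countably many) open conditions required there.

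For (ii), I would introduce the locus $\Sigma\subset\overline{\cM_2}$ of curves $C$ with $J(C)$ isogenous to $E_0\times E_0$ for some elliptic $E_0$. Such a $C$ is pinned down up to finite ambiguity by the $j$-invariant of $E_0$ and the discrete isogeny data, so $\Sigma$ is a countable union of at most $1$-dimensional loci. I would then repeat the count in the proof of Lemma \ref{mu_and_pi_type1}: the set of pairs $(P,t)$ with $\mu_P(t)\in\Sigma$ lives inside the $6$-dimensional space of pairs, and since each $[C]$ is hit by a $3$-dimensional family of $P$, its dimension is at most $1+3=4$. Its projection to the $5$-dimensional $P$-space is therefore not dominant, and intersecting the complements over the (countably many) components of $\Sigma$ shows that for very general $P$ the image of $\mu_P$ misses $\Sigma$ entirely, in particular at every point of $\mu_P\cap\pi_{2/1,d}$.

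Assertion (iii) is where the structural input of \S\ref{split_jac} enters, and it is essentially a direct application of Corollary \ref{complement_not_isogenous}. First I would arrange, by a dimension count of the same type, that for very general $P$ the curves $C$ in the intersection are not $d'$-elliptic for any $d'<d$; this forces the degree-$d$ cover $f\colon C\to E$ to be optimal, so $E=E_1$ and the complementary cover $f_2\colon C\to E_2$ of Lemma \ref{kuhn} has, by (ii), a factor $E_2$ not isogenous to $E_1$. Then, given $h\in\mathrm{Aut}(C)$, I apply Corollary \ref{complement_not_isogenous} to the nonconstant map $f\circ h\colon C\to E_1$: it factors through exactly one of $f_1=f$ and $f_2$, and it cannot factor through $f_2$ because there is no nonconstant morphism $E_2\to E_1$. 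Hence $f\circ h=g\circ f$ with $g\colon E_1\to E_1$, and comparing degrees gives $\deg g=1$, so $g=:h'\in\mathrm{Aut}(E)$ is the required compatible automorphism, with $f'=f$.

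For the transversality (iv), recall from Proposition \ref{adm_defos} that $\Adm_{2/1,d}$ is smooth and $2$-dimensional at a type-$1$ cover, with its deformations identified with those of the marked target $(Y,y_1,y_2)$. I would first verify that the differential of $\pi_{2/1,d}$ is injective at $[f]$ for general $C$ in the intersection, i.e.\ that the finitely many degree-$d$ covers $C\to E$ are infinitesimally rigid relative to their source; then the image of this differential is precisely the tangent space to the $d$-elliptic divisor $D$, which is smooth at such $[C]$. Transversality of $\mu_P$ and $\pi_{2/1,d}$ at $[f]$ thus reduces to transversality of the curve $\mu_P$ to $D$, which I obtain by the now-familiar count: the pairs $(P,t)$ with $\mu_P(t)\in D$ and $\mu_P$ tangent to $D$ there form a locus of dimension at most $4$, hence do not dominate the $P$-space. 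I expect this last step to be the main obstacle, since it is the only place requiring genuine control of the local geometry of $\pi_{2/1,d}$ at type-$1$ covers — one must know that its image is a smooth divisor and that $\pi_{2/1,d}$ is unramified onto it before the tangency count can be invoked. By contrast the seemingly delicate statement (iii) is, given (ii), purely formal.
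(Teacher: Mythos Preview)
Your treatment of (i) and (ii) matches the paper's. The genuine gap is in (iii): the dimension count you invoke to force $f$ to be optimal does not go through. For each $d'\ge 2$ the $d'$-elliptic locus in $\overline{\cM_2}$ is a \emph{divisor} (the Hurwitz space $\Adm_{2/1,d'}$ has dimension $2$), so a $1$-parameter family such as $\mu_P$ meets it in a nonempty finite set, and no genericity hypothesis on $P$ empties this when $d'\mid d$. Concretely, every bielliptic fiber of $\mu_P$ is automatically $d$-elliptic for all even $d$, and one checks directly (e.g.\ $[\mu_P]\cdot\tfrac14[\pi_{2/1,2}]=30$ via Proposition~\ref{class_of_mu} and Theorem~\ref{main_thm}) that $\mu_P$ always contains bielliptic members. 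At those points the degree-$d$ cover $f$ factors as $g\circ f_1$ with $f_1$ optimal of degree $d'<d$; your argument then only yields $f_1\circ h=\beta\circ f_1$ for some $\beta\in\mathrm{Aut}(E_1)$, and there is no a priori reason for $\beta$ to descend along the isogeny $g$.

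The paper closes this gap not by making $f$ optimal but by bounding $\mathrm{Aut}(C)$ directly: Lemma~\ref{control_aut_groups} shows that for general $P$ every smooth fiber has $\#\mathrm{Aut}(C)\in\{2,4\}$, the latter exactly in the bielliptic case. The hyperelliptic involution descends to $[-1]$ on $E$ for any $f$, while for a bielliptic $C$ one applies Corollary~\ref{complement_not_isogenous} to the two optimal degree-$2$ quotients (using (ii) to know they are non-isogenous), forcing $f$ to factor through one of them; the bielliptic involution then commutes with $f$ up to $[-1]$. So (iii) is not ``purely formal'' from (ii); it needs the explicit automorphism bound on fibers of $\mu_P$.

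For (iv) your outline is reasonable but the paper's route is shorter and avoids the obstacle you flag. Instead of establishing that $\pi_{2/1,d}$ is an immersion onto a smooth divisor and then running a tangency count, the paper pulls $\Adm_{2/1,d}^{\circ}$ back along the smooth surjection $\bA^6\setminus\Delta\to\cM_2$ given by ordered Weierstrass points, obtaining a smooth $W$, and applies generic smoothness to the projection $W\to\bA^5\setminus\Delta$. The generic fiber of this projection is exactly the intersection in question (once conditions (i)--(iii) are in place), so its smoothness gives transversality with no local analysis of the image of $\pi_{2/1,d}$ required.
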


\begin{proof}
The first condition is Lemma \ref{mu_and_pi_type1}.

The locus of smooth curves in $\overline{\cM_2}$ whose Jacobian is isogenous to a self-product of an elliptic curve is a countable union of substacks of dimension 1, because we have a 1-dimensional family of elliptic curves, and each elliptic curve has a countable set of isogeny quotients. By the same argument as in Lemma \ref{mu_and_pi_type1}, a very general $P$ avoids this locus.

By Lemma \ref{control_aut_groups}, the only possible automorphisms of $C$ are the hyperelliptic involution, which is compatible with an involution of $E$, or a bielliptic involution, which commutes with $f$, by Corollary \ref{complement_not_isogenous} and condition (ii).

Finally, we check the transversality. Consider the diagram
\begin{equation*}
\xymatrix{
W \ar[r] \ar[d] & \Adm_{2/1,d}^{\circ} \ar[d]\\
\bA^{6}-\Delta \ar[r] \ar[d] & \cM_2 \\
\bA^5-\Delta
}
\end{equation*}
where the map $\bA^6-\Delta\to\cM_2$ is the smooth morphism taking six distinct points $x_1\ldots,x_6$ to the hyperelliptic curve branched over $x_1,\ldots,x_6$, the map to $\bA^5-\Delta$ remembers the first five points, and the top square is Cartesian. By Proposition \ref{adm_defos}, $\Adm_{2/1,d}^{\circ}$ is smooth, so $W$ is smooth of dimension 0. Thus, by generic smoothness, the general fiber of $q:W\to\bA^5-\Delta$ is smooth. The previous conditions guarantee that the very general fiber of $q$ is precisely the intersection of $\mu_P$ with $\pi_{2/1,d}$, where $P(x)=(x-x_1)\cdots(x-x_5)$ (we need to assume in addition that $x_6\neq0$, because in the definition of $\mu_P$, we square the varying parameter), so we are done.
\end{proof}

\begin{proof}[Proof of Theorem \ref{enum_thm}]
Fix a coordinate on $\bP^1$, and let $P$ be the monic polynomial with roots $x_1,\ldots,x_5$; we assume $0,\infty\neq x_i$. Let $C_{x_6}$ denote the genus 2 curve branched over $x_1,\ldots,x_6$. For a very general collection of $x_1,\ldots,x_5$, the conditions of Proposition \ref{mu_and_pi_intersection_nice} are satisfied. The $\bC$-points of the intersection of $\mu_P$ and $\pi_{2/1,d}$ consist of the data of $t\in\bC$, a cover $f:C\to E$ (with the data of its branch points), and an isomorphism $h:C_{x_6}\cong C$. By condition (iii), we may disregard $h$. It is moreover clear that these objects have no automorphisms. By condition (ii) and Corollary \ref{complement_not_isogenous}, $f:C\to E$ factors uniquely through exactly one of two optimal covers $f_i:C\to E_i$ of some degree $d'|d$.

Let $b_{d'}$ be the number of $x_6$ such that $C_{x_6}$ admits an optimal cover $C_{x_6}\to E$ of degree $d'$. Then, we have
\begin{equation*}
a_d=\sum_{d'|d}b_{d'}=(b\star1)_d,
\end{equation*}
where $\star$ denotes Dirichlet convolution and $1_d=1$. Given a $d$-elliptic curve $C_{x_6}$, the number of points in the intersection of $\mu_{P}$ and $\pi_{2/1,d}$ factoring through an optimal cover from $C$ of degree $d'$ is $4\sigma_1(d/d')$. Indeed, we have 2 choices for the optimal cover, 2 choices for the labelling of the branch points on $E$, and $\sigma_1(d/d')$ quotients of $E$ of degree $d/d'$, by Lemma \ref{count_isogenies}. Because we may assume $x_6\neq0,\infty$, the family $\mu_P$ contains $C_{x_6}$ twice, so we have
\begin{equation*}
[\mu_{P}]\cdot[\pi_{2/1,d}]=8(b\star\sigma)_d\cdot p
\end{equation*}
Applying Theorem \ref{main_thm} and Proposition \ref{class_of_mu}, we have
\begin{equation*}
(b\star\sigma)_d=5(\sigma_{3}(d)-d\sigma_{1}(d)).
\end{equation*}
However, as $\sigma=\Id\star 1$, where $\Id_d=d$, and $a=b\star 1$, we have
\begin{equation*}
(a\star\Id)_d=5(\sigma_{3}(d)-d\sigma_{1}(d)).
\end{equation*}
Because $\Id_{d}^{-1}=\mu(d)d$ is a Dirichlet inverse of $\Id$, we get
\begin{align*}
a_d&=5\sum_{d'|d}(\sigma_{3}(d')-d'\sigma_{1}(d'))\cdot\mu\left(\frac{d}{d'}\right)\cdot\frac{d}{d'}\\
&=5d\left[\sum_{d'|d}\left(\frac{\sigma_{3}(d')}{d'}\cdot\mu\left(\frac{d}{d'}\right)\right)-d\right],
\end{align*}
where we have applied M\"{o}bius Inversion. The proof is complete.
\end{proof}


\begin{thebibliography}{AMSa}

\bibitem[ACV03]{acv} Dan Abramovich, Alessio Corti, Angelo Vistoli, \emph{Twisted bundles and admissible covers}. Comm. Alg. \textbf{8} (2003), 3547-3618

\bibitem[BY03]{by} Bruce B. Berndt and Ae Ja Yee, \emph{A page on Eisenstein series in Ramanjuan's lost notebook}. Glasgow Math. J. \textbf{45} (2003), 123-129

\bibitem[DM69]{dm} Pierre Deligne, David Mumford, \emph{The irreducibility of the space of curves of given genus}. Pub. Math. l'IH\'{E}S \textbf{36} (1969), 75-109

\bibitem[FP15]{faber} Carel Faber, Nicola Pagani, \emph{The class of the bielliptic locus in genus 3}. Int. Math. Res. Not. \textbf{12} (2015), 3943-3961

\bibitem[Ful98]{fulton} William Fulton, \emph{Intersection Theory} (1998)

\bibitem[GP03]{gp} Thomas Graber and Rahul Pandharipande, \emph{Constructions of nontautological classes on moduli spaces of curves}. Michigan Math. J. \textbf{51} (2003), 93-109

\bibitem[HM82]{hm} Joe Harris, David Mumford, {On the Kodaira dimension of the moduli space of curves}. Inv. Math. \textbf{67} (1982), 23-86

\bibitem[Kn83]{knudsen} Finn F. Knudsen, \emph{The projectivity of the moduli space of stable curves, II}. Math. Scand. \textbf{52} (1983), 161-199

\bibitem[Ku88]{kuhn} Robert M. Kuhn, \emph{Curves of genus 2 with split Jacobian}. Trans. Amer. Math. Soc \textbf{307} (1988), 41-49

\bibitem[Mo95]{mochizuki} Shinichi Mochizuki, \emph{The geometry of the compactification of the Hurwitz scheme}. Publ. Res. Inst. Math. \textbf{31} (1995), 355-441

\bibitem[Mum83]{mumford} David Mumford, \emph{Towards an enumerative geometry of the moduli spaces of curves}. Arith. Geom. \textbf{2} (1983), 271-328

\bibitem[OP06]{op} Andrei Okounkov and Rahul Pandharipande, \emph{Gromov-Witten theory, Hurwitz theory, and completed cycles}. Annals of Math. \textbf{163} (2006), 517-560

\bibitem[Ram16]{ramanujan} Srinivasa Ramanujan, \emph{On certain arithmetical functions}. Trans. Camb. Phil. Soc. \textbf{22} (1916), 159-184

\bibitem[SvZ18]{schmittvanzelm} Johannes Schmitt and Jason van Zelm, \emph{Intersection of loci of admissible covers with tautological classes}. arXiv 1808.05817

\bibitem[vZ18]{vanzelm} Jason van Zelm, \emph{Nontautological bielliptic cycles}. Pacific J. Math. \textbf{294} (2018), 495-504.
\end{thebibliography}
\end{document}